\documentclass[a4paper,oneside,11pt]{article}

\usepackage{geometry}                % See geometry.pdf to learn the layout options. There are lots.
\geometry{letterpaper}                   % ... or a4paper or a5paper or ... 
\usepackage{graphicx,color}
\usepackage{amsmath,amsfonts,amssymb,amsthm}
\usepackage{enumerate}
\DeclareGraphicsRule{.tif}{png}{.png}{`convert #1 `dirname #1`/`basename #1 .tif`.png}

\usepackage[utf8]{inputenc} 
\usepackage[T1]{fontenc} 
 
\usepackage[english]{babel} 

\title{Automorphisms of graphs of cyclic splittings of free groups}
\author{Camille Horbez  and Richard D. Wade}

\usepackage[all]{xy}

\usepackage{bbm}
\begin{document}
\maketitle
%\tableofcontents
\newtheorem*{thma}{Theorem A}
\newtheorem{de}{Definition} [section]
\newtheorem{theo}[de]{Theorem} 
\newtheorem{prop}[de]{Proposition}
\newtheorem{lemma}[de]{Lemma}
\newtheorem{cor}[de]{Corollary}
\newtheorem{propd}[de]{Proposition-Definition}
\theoremstyle{remark}
\newtheorem{rk}[de]{Remark}

\begin{abstract}
We prove that any isometry of the graph of cyclic splittings of a finitely generated free group $F_N$ of rank $N\ge 3$ is induced by an outer automorphism of $F_N$. The same statement also applies to the graphs of maximally-cyclic splittings, and of very small splittings. \end{abstract}

\section*{Introduction}

The study of the outer automorphism group of a finitely generated free group has benefited greatly from analogies with mapping class groups of compact surfaces. In recent years, research has concentrated on understanding the geometry of proposed analogues of the curve graph of a surface, one main question being that of finding natural hyperbolic $\text{Out}(F_N)$-graphs. Among these stand the free factor graph, the free splitting graph and the cyclic splitting graph. We refer the reader to \cite{BF12,BF13,BR13,Ham12,HM12,HM14,HH13,KR12,Man12} for various results about these graphs.

In the context of closed orientable surfaces, Royden proved that (except in a finite number of sporadic cases) the group of biholomorphisms of Teichmüller space, as well as its group of isometries with respect to the Teichmüller metric, coincides with the extended mapping class group of the surface \cite{Roy71}. His result was extended by Earle and Kra to the case of punctured surfaces \cite{EK74-2}. Similar results are also known to hold for the Weil--Petersson metric \cite{MW02,BM07} or Thurston's asymmetric metric \cite{Wal11}. Ivanov, Korkmaz and Luo's rigidity result for the curve graph of a (nonsporadic) compact surface \cite{Iva97,Kor99,Luo00} states that the group of simplicial isometries of this graph also coincides with the extended mapping class group. The reader is referred to \cite{McP12} for a list of various rigidity results for simplicial actions of mapping class groups. 

There are some known analogous results for the group $\text{Out}(F_N)$. Bridson and Vogtmann \cite{BV01} first proved that when $N\ge 3$, the group $\text{Out}(F_N)$ is the group of simplicial automorphisms of the spine of Outer space, and Francaviglia and Martino then used this to show that the group of isometries of Outer space with the Lipschitz metric \cite{FM12} is also equal to $\text{Out}(F_N)$ when $N\ge 3$, and to $PSL(2,\mathbb{Z})$ when $N=2$ (another approach to this result, based on a study of the metric completion of Outer space, is due to Algom-Kfir \cite{AK12}). Building on Bridson and Vogtmann's result, Aramayona and Souto proved that for $N\ge 3$, the group $\text{Out}(F_N)$ is also the group of simplicial isometries of the free splitting graph \cite{AS11}. This paper is concerned with a rigidity question concerning the isometry group of Mann's cyclic splitting graph \cite{Man12}, and a pair of its close relatives.

A \emph{splitting} of $F_N$ is a simplicial tree on which $F_N$ acts by simplicial automorphisms with no proper invariant subtree.  Two splittings are \emph{equivalent} if there exists an $F_N$-equivariant homeomorphism between them. Given two splittings $T$ and $T'$ of $F_N$, we say that $T$ is a \emph{refinement} of $T'$ if $T'$ is obtained by equivariantly collapsing some of the edges in $T$ to points. Let $FZ_N$ denote the graph of cyclic splittings of $F_N$, i.e. the graph whose vertices are the equivalence classes of splittings of $F_N$ whose edge stabilizers are cyclic (possibly trivial) subgroups of $F_N$. Two such splittings are joined by an edge if one is a proper refinement of the other. Let $FZ_N^{max}$ be the graph of maximally-cyclic splittings of $F_N$, which is defined in the same way with the extra assumption that edge stabilizers in the splittings we consider are closed under taking roots. We will also consider the graph $VS_N$ of very small splittings of $F_N$. This consists of  maximally-cyclic splittings that are additionally required to have trivial tripod stabilizers. The graphs $FZ_N$, $FZ_N^{max}$ and $VS_N$, equipped with the path metric, come with right isometric actions of the group $\text{Out}(F_N)$ of outer automorphisms of $F_N$ (an automorphism $\phi \in \text{Aut}(F_n)$ induces an isometry of each complex by precomposing each $F_N$--action on a tree by $\phi$, and this isometry depends only on the outer automorphism class of $\phi$). The goal of this paper is to show the following rigidity result, which states that every isometry of either $FZ_N$, $FZ_N^{max}$ or $VS_N$ is in fact induced by the action of an element of $\text{Out}(F_N)$. 

\begin{thma} \label{automorphism-group} 
For all $N\ge 3$, the natural maps from $\text{Out}(F_N)$ to the isometry groups of $FZ_N$, $FZ_N^{max}$ and $VS_N$ are isomorphisms. 
\end{thma}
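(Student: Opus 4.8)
The plan is to bootstrap from Aramayona--Souto's rigidity of the free splitting graph. The free splittings of $F_N$ (those whose edge stabilizers are trivial) are in particular cyclic, root-closed and tripod-trivial, so they form a subset of the vertex set of each of $FZ_N$, $FZ_N^{max}$ and $VS_N$, and two free splittings are joined in any of these graphs exactly when one properly refines the other; thus the free splitting graph $FS_N$ sits inside each of them compatibly with its own combinatorial structure. Since $\text{Out}(F_N)$ acts faithfully on the set of (equivalence classes of) free splittings --- a nontrivial outer automorphism moves some conjugacy class of free factors, hence some one-edge free splitting; compare \cite{AS11} --- the natural map $\text{Out}(F_N)\to\text{Isom}(\mathcal G)$ is injective for each $\mathcal G\in\{FZ_N, FZ_N^{max}, VS_N\}$. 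This settles injectivity.

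For surjectivity, fix an isometry $\Phi$ of one of the graphs $\mathcal G$. The crux is to show that $\Phi$ carries free splittings to free splittings. What is needed is a characterization of \emph{being a free splitting} that can be read off from the abstract graph $\mathcal G$ alone and is therefore $\Phi$-invariant. The combinatorial structures one wants to recover intrinsically are the refinement partial order --- a consistent orientation of the edges of $\mathcal G$ recording which endpoint refines which --- and the compatibility relation, i.e. which pairs of vertices admit a common refinement (strictly weaker than adjacency, since two distinct collapses of a single splitting are compatible without either refining the other). Free splittings can then be singled out within this picture, for instance as the vertices whose refinements and collapses display no ``hidden'' nontrivial edge group, or through the shape of the maximal refinement chains above them. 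Granting such a characterization, $\Phi$ and $\Phi^{-1}$ both preserve the free splittings; as these span a full subgraph on which the refinement and compatibility relations are those of $FS_N$, the isometry $\Phi$ induces an automorphism of $FS_N$, which by Aramayona--Souto is realized by some $\phi\in\text{Out}(F_N)$. Replacing $\Phi$ by $\phi^{-1}\Phi$, we may assume that $\Phi$ fixes every free splitting.

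It then remains to show that an isometry of $\mathcal G$ fixing all free splittings is the identity. Here one uses that every cyclic splitting is pinned down by the free splittings near it. A one-edge cyclic splitting with nontrivial edge group $\langle c\rangle$ can be blown up at a vertex group of rank at least $2$ and then collapsed along its central edge to produce free splittings that jointly remember its two vertex groups and its edge group; it is therefore the unique vertex of $\mathcal G$ occupying a prescribed combinatorial position relative to these now-fixed free splittings, and so it is fixed by $\Phi$. A general cyclic splitting is treated in the same spirit, being pinned down by the (now fixed) free splittings and one-edge cyclic splittings lying near it --- for instance as a minimal common refinement of its one-edge collapses. Hence $\Phi=\mathrm{id}$, and the natural map is onto.

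The main obstacle is the first step of the surjectivity argument: an isometry sees only the abstract graph $\mathcal G$, not the edge stabilizers, so detecting free splittings requires isolating genuinely graph-theoretic fingerprints --- the structure of maximal refinement chains, the local combinatorics around a vertex, and the existence and uniqueness of common refinements --- and, along the way, reconstructing the compatibility relation on which the rest of the argument rests. Once this is in place, the reduction to $FS_N$-rigidity and the final pinning-down step are essentially formal.
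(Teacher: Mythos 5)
Your high-level plan --- reduce to Aramayona--Souto's rigidity of $FS_N$ by first showing an isometry of $\mathcal G$ preserves the $FS_N$-subgraph, then showing an isometry that fixes $FS_N$ pointwise is the identity --- is exactly the architecture of the paper's proof. However, the two steps you identify and then defer are where essentially all the mathematical content lives, and in both you are leaving real gaps rather than routine verifications.

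For the first step, you write ``Granting such a characterization'' of free splittings inside the abstract graph $\mathcal G$, and suggest one might see them via refinement chains or the absence of hidden nontrivial edge groups. The paper's actual work shows this is delicate and \emph{depends on which graph} $\mathcal G$ you are in. In $FZ_N$ the $\mathcal G$-maximal splittings happen to all be free (because one can always insert an edge over a proper power), so a free splitting is simply a vertex at distance $\le 1$ from a $\mathcal G$-maximal one; but this fails in $FZ_N^{max}$ and $VS_N$, where there exist maximal splittings with nontrivial edge stabilizers. There the paper distinguishes maximal free splittings from non-free maximal ones by the \emph{valence} (finite vs.\ infinite) of their one-edge collapses, and in the $FZ_N^{max}$ case even this fails and one must pass to carefully chosen two-edge collapses and track which maximal splittings refine them (Propositions~\ref{maximal-sphere}, \ref{maximal-sphere-2}, \ref{max-vsn}, and \ref{maximal-torus}). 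None of this is foreseeable from the generic ``look at maximal refinement chains'' heuristic, and the structural input --- the Shenitzer/Swarup unfolding lemma (Lemma~\ref{unfold}) and the Bestvina--Feighn bound on edge orbits (Theorem~\ref{bf}) --- is essential and unmentioned.

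For the final step, your idea that a one-edge cyclic splitting $T$ with nontrivial edge group is pinned down by the free splittings compatible with it runs into a concrete counterexample: the ``bad'' nonseparating splittings $(A\ast\langle w^t\rangle)\ast_{\langle w\rangle}$ in which $w$ is not contained in any proper free factor of $A$. Any such splitting is compatible with exactly \emph{one} one-edge free splitting, namely $A\ast$, so two different bad splittings with the same corank-one factor $A$ cannot be told apart by their compatible free splittings. This is precisely why the paper splits the argument into ``good'' and ``bad'' one-edge splittings (Propositions~\ref{characterization-bad-torus}--\ref{fix-bad}): good ones are distinguished pairwise by compatible one-edge free splittings, and bad ones are then distinguished pairwise by compatible \emph{good} one-edge splittings, which have already been pinned down. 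Your sentence about ``free splittings and one-edge cyclic splittings lying near it'' gestures at this two-stage bootstrap, but you do not identify the obstruction that forces it or the class of splittings for which the naive argument breaks. So the proposal is structurally aligned with the paper but omits both of the hard combinatorial lemmas the proof turns on.
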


A description of all maximally-cyclic splittings of $F_2$ can be found in \cite{CV91}. The complex $FZ_2^{max}$ turns out to be isomorphic to the Farey graph with depth two dead ends and "fins" attached. In particular, its automorphism group is isomorphic to $PSL(2,\mathbb{Z})$.

We may then assume that $N \geq 3$. Let $\mathcal{G}$ be one of the graphs $FZ_N$, $FZ_N^{max}$ or $VS_N$. The free splitting graph $FS_N$, whose vertices are the equivalence classes of splittings of $F_N$ with trivial edge stabilizers, sits as a subcomplex inside $\mathcal{G}$. Our proof of Theorem~A relies on Aramayona and Souto's rigidity statement  \cite{AS11} for the free splitting graph: we first prove that any isometry of $\mathcal{G}$ preserves the subgraph $FS_N$ setwise, and then show that any isometry of $\mathcal{G}$ which restricts to the identity on $FS_N$ is actually the identity map.

The restriction to the set of very small splittings is a natural one -- these are exactly the cyclic splittings which arise in the boundary of Outer space (see \cite{CL95} where the notion of a very small splitting was introduced for the first time). The natural inclusions of $FZ_N^{max}$ and $VS_N$ into $FS_N$ may not be quasi-isometries, however Mann's proof \cite{Man12} translates to show that $FZ_N^{max}$ and $VS_N$ are also hyperbolic.

We finally note that the problem of determining the group of simplicial isometries of the free factor graph is still open.

\section*{Acknowledgments}

This work started during the programme "The Geometry of Outer space: Investigated through its analogy with Teichmueller space" held at Aix-Marseille Université during Summer 2013. We are greatly indebted to the organizers of this event. We would also like to thank Brian Mann for inspiring conversations we had there.

\section{The structure of cyclic splittings of $F_N$}\label{s:vs}  

Recall that a \emph{splitting} of $F_N$ is a simplicial action of $F_N$ on a simplicial tree $T$ with no proper and nontrivial invariant subtrees. A splitting is \emph{cyclic} if every edge stabilizer is cyclic (possibly trivial), and is \emph{maximally-cyclic} if each nontrivial edge stabilizer is a maximally-cyclic subgroup of $F_N$ (i.e. edge stabilizers are closed under taking roots). When $\mathcal{G}$ is one of the graphs $FS_N$, $FZ_N$, $FZ_N^{max}$ or $VS_N$ defined in the introduction, we say that a splitting is \emph{$\mathcal{G}$-maximal} if it admits no nontrivial refinement in $\mathcal{G}$. 

Given an edge $e$ in an $F_N$-tree $T$ adjacent to a vertex $v$, we denote by $[e]$ the $G_v$-orbit of the edge $e$ and by $[G_e]$ the $G_v$-conjugacy class of its edge group. The following lemma is a version of a theorem by Shenitzer and Swarup \cite{She55,Swa86}, see also \cite{Sta91} or \cite[Lemma 4.1]{BF94}. This will turn out to be crucial in our proof of Theorem~A for understanding the structure of cyclic splittings of $F_N$. 

\begin{lemma} (Shenitzer \cite{She55}, Swarup \cite{Swa86}, Stallings \cite{Sta91}, Bestvina--Feighn \cite[Lemma 4.1] {BF94}) \label{unfold}
Let $T$ be a cyclic splitting of $F_N$ with a nontrivial edge stabilizer. Then there exists an edge $e$ with nontrivial stabilizer $G_e$ adjacent to a vertex $v$ such that:

\begin{enumerate}[$(\star)$] \item There is a decomposition $G_v=G_e\ast A$ such that if $e'$ is another edge adjacent to $v$, with $[e']\neq[e]$, then some representative of $[G_{e'}]$ is contained in $A$.\end{enumerate} 
\end{lemma}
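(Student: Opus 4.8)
The plan is to reduce to the classical one-edge case and then remove edges one at a time. Pass to the quotient graph of groups $\Gamma = T/F_N$; since $F_N$ is finitely generated and acts minimally, $\Gamma$ is a finite graph, its edge groups are cyclic, and its vertex groups are finitely generated subgroups of $F_N$, hence finitely generated free groups. Because condition $(\star)$ only involves a single vertex $v$, it is enough to exhibit in $\Gamma$ one oriented edge $e$ from $v$ to $w$ with $G_e \neq 1$ such that $G_v = G_e \ast A$ (identifying $G_e$ with its image under the $v$-side inclusion) and such that every other edge group at $v$ is conjugate in $G_v$ into $A$. I would argue by induction on the number of edges of $\Gamma$.

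\emph{Base case: $\Gamma$ has one edge.} Then $F_N = A \ast_C B$ or $F_N = A\ast_C$ with $C = \langle c\rangle$ infinite cyclic. These are exactly the hypotheses of Shenitzer's theorem (amalgam case) and Swarup's theorem (HNN case): after suitably orienting the edge, one of the incident vertex groups $G_v$ decomposes as $\langle c_v\rangle \ast A$, where $c_v$ is the image of $c$ under the $v$-side inclusion, and -- in the HNN case -- the image of $c$ under the other inclusion is conjugate in $G_v$ into $A$. Since the only edges incident to $v$ in $T$ are the orientations of the single edge orbit (and in the amalgam case only one of them occurs at $v$), this yields $(\star)$.

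\emph{Inductive step.} Suppose $\Gamma$ has at least two edges. If some edge $e_0$ has trivial stabilizer, collapse its $F_N$-orbit in $T$ to obtain a cyclic splitting $T'$ of $F_N$ with fewer edges and still carrying a nontrivial edge stabilizer, and apply the inductive hypothesis to get an oriented edge $e$ from $\bar v$ to $\bar w$ in $T'$ satisfying $(\star)$. If $\bar v$ is not the vertex created by the collapse, then $e$ and the local picture at $\bar v$ lift verbatim to $T$. Otherwise $\bar v$ comes from collapsing a trivial-edge subtree $Y \subseteq T$, so $G_{\bar v}$ is a free product of the vertex groups of $T$ inside $Y$ together with a free group; in particular the vertex group $G_v$ of $T$ carrying the edge $e$ is a free factor of $G_{\bar v}$. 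Since $G_e = \langle c\rangle \le G_v \le G_{\bar v}$ is a free factor of $G_{\bar v}$, a Kurosh-subgroup-theorem argument shows that $\langle c\rangle$ is a free factor of $G_v$, say $G_v = \langle c\rangle \ast A'$; one then checks that every edge group at $v$ is conjugate in $G_v$ into $A'$: edges lying inside $Y$ have trivial stabilizer, and an edge $e'$ not in $Y$ has edge group conjugate in $G_{\bar v}$ into the complement $A$ of $\langle c\rangle$ by $(\star)$ for $T'$, hence not conjugate into $\langle c\rangle$, which (again by Kurosh applied to $\langle c\rangle \ast A'$) forces it into a complement of $\langle c\rangle$ in $G_v$.

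Finally, if every edge of $\Gamma$ has nontrivial stabilizer, the collapse above is unavailable and I would use a folding argument instead: since $F_N$ is free there is a free $F_N$-tree $R$ and an $F_N$-equivariant simplicial map $R \to T$, which factors as a finite sequence of Stallings folds $R = R_0 \to \cdots \to R_m = T$; because each $R_i \to T$ is equivariant and edge-to-edge, every edge stabilizer that appears embeds into an edge stabilizer of $T$, so every $R_i$ is a cyclic splitting, and the first fold creating a nontrivial edge stabilizer identifies an edge of $R_{i-1}$ with one of its translates sharing its initial vertex, which reads off a decomposition of the form $(\star)$ at a vertex of $R_i$. \emph{The main obstacle}, common to both incarnations, is the bookkeeping needed to pass from the auxiliary tree ($T'$, or $R_i$) back to $T$: tracking the free factor $\langle c\rangle$ and the positions of the remaining incident edge groups through the collapse (respectively through the remaining folds). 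The tools for this are the Kurosh subgroup theorem and the essential uniqueness of free-product decompositions of free groups.
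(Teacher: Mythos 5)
The paper does not actually prove this lemma: it is stated with a citation list (Shenitzer, Swarup, Stallings, Bestvina--Feighn \cite[Lemma~4.1]{BF94}) and no proof, so there is no argument in the paper to compare against. Judged on its own terms, your collapse-and-induct plan is reasonable in outline, and the one-edge base case is correctly reduced to Shenitzer's theorem (amalgam) and Swarup's theorem (HNN). However, the inductive step contains a genuine error, not merely omitted bookkeeping. You claim that because $G_{e'}$ is not conjugate into $\langle c\rangle$, Kurosh forces $G_{e'}$ into a complement of $\langle c\rangle$ in $G_v=\langle c\rangle\ast A'$. That inference is false in a free product: a cyclic subgroup may fail to be conjugate into either factor (e.g.\ $\langle ca\rangle$ inside $\langle c\rangle\ast\langle a\rangle$). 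In addition, condition $(\star)$ requires a \emph{single} complement $A'$ that simultaneously absorbs conjugates of all remaining edge groups at $v$; your argument at best yields a possibly different complement for each $e'$. A correct version of this step applies Kurosh to the action of $G_v$ on the Bass--Serre tree of $G_{\bar v}=\langle c\rangle\ast A$, uses that $\langle c\rangle$ is a malnormal cyclic free factor of $G_{\bar v}$ lying in $G_v$ to see that it occurs exactly once among the Kurosh factors, and then notes that each $G_{e'}$ is elliptic in that tree and hence lands, up to $G_v$-conjugacy, in the free product of the remaining Kurosh factors, which is the required single $A'$. That argument succeeds, but it is not what you wrote.

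The folding fallback is likewise unfinished, as you acknowledge: the decomposition you produce is at a vertex of some intermediate tree $R_i$, not of $T$, and transporting it forward through the remaining folds while tracking the peripheral structure of the relevant vertex group is precisely the substantive content of \cite[Lemma~4.1]{BF94}. There is also no reason to branch on whether some edge stabilizer is trivial; the folding argument, carried out properly, handles all cases uniformly. In short, you have identified the right classical ingredients, but the two steps you flag as ``the main obstacle'' are exactly the steps that carry the weight of the proof, and neither is overcome as written.
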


We say that an edge $e$ satisfying the condition $(\star)$ is \emph{unfoldable} at the vertex $v$. The above lemma tells us that we can find a (possibly trivial) refinement $T'$ of $T$ by (equivariantly) replacing the vertex $v$ with the tree corresponding to the above splitting of $G_v$. In the new quotient graph $T'/F_N$, the vertex with stabilizer  $G_e$ will have valence 2, and we have the following lemma: 

\begin{lemma} \label{unfold2} 
If $T$ is a maximally-cyclic splitting of $F_N$ with some nontrivial edge stabilizer then there is a refinement $T'$ of $T$ such that the quotient graph of groups $T'/F_N$ has a vertex of valence two where one adjacent edge has a trivial stabilizer, and the other is nontrivial. In particular, if $T$ is $\mathcal{G}$-maximal, then in the quotient graph $T/F_N$ each unfoldable edge has a vertex of valence two such that the other adjacent edge at this vertex has a trivial stabilizer. 
\qed \end{lemma}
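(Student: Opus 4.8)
The plan is to feed the unfoldable edge supplied by Lemma~\ref{unfold} into an explicit blow-up, and then, for the ``in particular'' clause, to argue that $\mathcal{G}$-maximality forces this blow-up to be trivial, which pins down the local picture. Concretely, I would first apply Lemma~\ref{unfold} to obtain an edge $e$ with nontrivial stabilizer $G_e$, an adjacent vertex $v$, and a splitting $G_v=G_e\ast A$ satisfying $(\star)$. I then build a refinement $T'$ of $T$ by equivariantly expanding the $F_N$-orbit of $v$: replace $v$ by the $G_v$-tree $Y_v$ dual to the one-edge graph of groups $[u]\!-\![\tilde e]\!-\![w]$ with vertex groups $G_e$ and $A$ and trivial edge group (so $Y_v$ is a single edge when $A\neq 1$, and a $G_e$-star when $A=1$), attach the edge $e$ at the vertex $u$, and, using $(\star)$ to conjugate each $G_{e'}$ into $A$, attach every other edge $e'$ at $v$ on the $w$-side. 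The routine checks are that collapsing the orbit of $\tilde e$ recovers $T$, so $T'$ refines $T$; that $T'$ is minimal, the only new vertices being $u$, of valence two, and $w$, which has valence at least two unless $e$ was the only edge at $v$, in which case the edge $\tilde e$ at $w$ has trivial group, properly contained in $G_w$; and that $T'$ is maximally-cyclic, its edge groups being those of $T$ together with trivial ones. In $T'/F_N$ the vertex $u$ then has valence two, with $\tilde e$ trivial and $e$ nontrivial.

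This yields the first assertion, apart from the degenerate case $A=1$: there $(\star)$ forces every edge at $v$ other than $e$ to be trivial, minimality forbids $v$ from having valence one, if $v$ has exactly one other edge one simply takes $T'=T$, and otherwise the $G_e$-star blow-up above does the job. For the second assertion I assume $T$ is $\mathcal{G}$-maximal with $e$ unfoldable at $v$, and check that $T'$ lies in $\mathcal{G}$: it is cyclic and maximally-cyclic as above, and it has trivial tripod stabilizers, since a tripod of $T'$ either projects to a tripod over an unchanged vertex of $T$, where its stabilizer equals that of the corresponding tripod of $T$ and so is trivial when $T\in VS_N$, or else it meets the expanded tree $Y_v$ and is incident there to an edge of trivial stabilizer. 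As the collapse $T'\to T$ kills a genuine orbit of edges, $T'$ is a \emph{nontrivial} refinement of $T$ inside $\mathcal{G}$, contradicting $\mathcal{G}$-maximality, unless the blow-up was already trivial --- that is, unless $A=1$ and $v$ has exactly one edge besides $e$. That edge is trivial by $(\star)$, so $v$ itself is the desired valence-two vertex, with $e$ nontrivial and the other adjacent edge trivial.

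The substance is entirely contained in Lemma~\ref{unfold}; everything after it is elementary Bass--Serre theory, so I expect the only delicate points to be (i) the bookkeeping when $e$ is a loop of $T/F_N$, where its two half-edges at $v$ lie in distinct $G_v$-orbits and $(\star)$ applies to the one not equal to $[e]$, forcing that half-edge onto the $w$-side so that $u$ stays of valence two; and (ii) the verifications that $T'$ is genuinely a minimal $F_N$-tree and, in the $A=1$ case, a \emph{strict} refinement of $T$, together with the tripod-stabilizer bookkeeping needed to place $T'$ in $VS_N$.
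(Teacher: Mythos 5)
Your proposal is correct and follows essentially the same route as the paper, whose ``proof'' is just the paragraph between Lemmas~\ref{unfold} and~\ref{unfold2}: apply Lemma~\ref{unfold}, equivariantly blow up $v$ by the decomposition $G_v=G_e\ast A$, and observe that the resulting $G_e$-vertex has valence two with one trivial and one nontrivial adjacent edge. The extra care you take over the degenerate case $A=1$ (the $G_e$-star blow-up vs.\ the trivial refinement), over minimality, over loop-edges, and over the verification that $T'$ remains in $\mathcal{G}$ simply spells out bookkeeping that the authors leave implicit, so there is no divergence of method and no gap.
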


Let $v$ be a vertex of valence $2$ in $T'/F_N$ given by Lemma \ref{unfold2}, and $e_1$ (resp. $e_2$) be the edge adjacent to $v$ with trivial (resp. nontrivial) stabilizer. We can equivariantly collapse the edge $e_2$ in $T'$ to obtain a new splitting $T''$. We say that $T''$ is obtained from $T$ by \emph{unfolding} the edge $e$, and $T'$ is a \emph{partial unfolding}. In the opposite direction $T'$ is obtained from $T''$ by partially folding the orbit of the edge added to split $G_v$, and $T$ is obtained from $T''$ by fully folding this orbit. Note that the unfolding operation is, in general, far from unique; there may be many possible choices for the complementary free factor $A$. For example, all splittings of $F_3=\langle a,b,c\rangle$ of the form $\langle a,b\rangle\ast\langle c[a,b]^k\rangle$ with $k\in\mathbb{Z}$ are obtained by (fully) unfolding the splitting $\langle a,b\rangle\ast_{[a,b]}\langle c,[a,b]\rangle$.

\paragraph*{The structure of one-edge cyclic splittings of $F_N$.}

A splitting $T$ is a \emph{$k$-edge splitting} if there are $k$ orbits of edges in $T$ under the action of $F_N$. To illustrate Lemma \ref{unfold}, we give the following classification of one-edge cyclic splittings of $F_N$. Such a splitting has one of the following forms:

\begin{itemize}
\item a \emph{separating one-edge free splitting} $F_N=A\ast B$, where $A$ and $B$ are complementary proper free factors of $F_N$, or
\item a \emph{nonseparating one-edge free splitting} $F_N=C\ast$, where $C$ is a corank one free factor of $F_N$, or 
\item a \emph{separating one-edge $\mathbb{Z}$-splitting} $F_N=A\ast_{\langle w\rangle}(B\ast\langle w\rangle)$, where $A$ and $B$ are complementary proper free factors of $F_N$, and $w\in A$, or
\item a \emph{nonseparating one-edge $\mathbb{Z}$-splitting} $F_N=(C\ast\langle w^t\rangle)\ast_{\langle w\rangle}$, where $C$ is a corank one free factor of $F_N$, and $w\in C$, and $t$ denotes the stable letter.
\end{itemize}

\paragraph*{Two useful results.}

We say that two splittings are \emph{compatible} if they both can be obtained by equivariantly collapsing edges of a common tree. Scott and Swarup showed that a $k$-edge splitting is determined by its set of $k$ one-edge collapses. Furthermore, it is enough for a set of splittings to be pariwise compatible to find a common refinement of the whole collection. 

\begin{theo}(Scott--Swarup \cite[Theorem 2.5]{SS00}, see also Handel--Mosher \cite[Lemma 1.3]{HM12})\label{t:ss}
Any set $T_1,\ldots,T_k$ of distinct, pairwise-compatible, one-edge cyclic splittings of $F_N$ has a unique $k$-edge refinement. Any $k$-edge cyclic splitting of $F_N$ refines exactly $k$ distinct one-edge splittings.
\end{theo}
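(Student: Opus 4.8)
We indicate the route we would take; the two assertions are proved in the order second, then first, since the second feeds the induction for the first. Let $T$ be a $k$-edge cyclic splitting and enumerate its $F_N$-orbits of edges as $[e_1],\dots,[e_k]$. For each $i$, the set of edges outside $[e_i]$ is an $F_N$-invariant subforest of $T$ (a complement of edges in a tree is always a forest), so collapsing it equivariantly yields a one-edge cyclic splitting $T_i'$ refined by $T$, whose edge group is the $T$-stabilizer of $e_i$. Conversely, a one-edge splitting refined by $T$ is a quotient of $T$ by an invariant subforest, and for a single orbit of edges to survive that subforest must be the complement of one orbit; hence $T_1',\dots,T_k'$ are exactly the one-edge splittings refined by $T$. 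There are at most $k$ of them, and the real point is that they are pairwise distinct. One reduces at once to $k=2$ (collapse all orbits but two), and then checks that the two one-edge collapses of a reduced two-edge graph of groups carry genuinely different data: their edge groups, the vertex groups appearing after collapsing, and the incidence pattern. In the cyclic case this is made explicit using the list of four types of one-edge splittings above, together with Lemma~\ref{unfold} to control the position of an edge group inside a vertex group; we regard this as routine bookkeeping.

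For the first assertion we induct on $k$. The case $k=1$ is vacuous, and $k=2$ is the definition of compatibility, plus the remark that since $T_1\neq T_2$ the minimal common refinement has exactly two orbits of edges. For the inductive step, let $T_1,\dots,T_k$ be distinct and pairwise compatible. By induction $T_1,\dots,T_{k-1}$ has a unique $(k-1)$-edge refinement $S$, and since $S$ refines these $k-1$ distinct one-edge splittings and has only $k-1$ orbits of edges, they are exactly the one-edge splittings refined by $S$. The step now reduces to the claim that \emph{if a one-edge splitting $U$ is compatible with every one-edge splitting refined by a splitting $S$, then $U$ is compatible with $S$}. Applying this with $U=T_k$, the minimal common refinement $X$ of $S$ and $T_k$ refines all of $T_1,\dots,T_k$ and has exactly $k$ orbits of edges, since it collapses onto $S$ by killing a single orbit. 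For uniqueness, let $X'$ be any $k$-edge refinement of $T_1,\dots,T_k$: it refines the $k$ distinct $T_i$ and has only $k$ orbits of edges, so its one-edge collapses are exactly $T_1,\dots,T_k$, each hit once; collapsing the orbit of $X'$ whose complementary collapse is $T_k$ gives a $(k-1)$-edge refinement of $T_1,\dots,T_{k-1}$, which is $S$ by induction, so $X'$ is a common refinement of the compatible pair $(S,T_k)$ and hence equals $X$ by uniqueness of the minimal common refinement of a compatible pair.

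The main obstacle is the local-to-global compatibility claim just used; everything else is bookkeeping or induction. We would prove it by bringing in Guirardel's core $\mathcal{C}(U,S)\subseteq U\times S$ of the pair of trees: $U$ and $S$ are compatible if and only if the core contains no two-cell, equivalently the intersection number $i(U,S)$ vanishes, and a two-cell in the core forces an essential crossing between the edge of $U$ and some edge $a$ of $S$. Collapsing all orbits of edges of $S$ other than the orbit of $a$ preserves this crossing, so $U$ fails to be compatible with that particular one-edge collapse of $S$ --- the contrapositive of the claim. The same circle of ideas yields the uniqueness of the minimal common refinement of a compatible pair used above, namely the core of that pair. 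In the cyclic setting one can instead argue directly, distinguishing cases according to the four types of one-edge splittings and invoking Lemmas~\ref{unfold} and~\ref{unfold2}, but the core formulation is cleaner. Combining these pieces gives both assertions.
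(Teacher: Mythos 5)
The paper does not prove this theorem; it quotes it from Scott--Swarup \cite[Theorem 2.5]{SS00} (see also Handel--Mosher \cite[Lemma 1.3]{HM12}), so there is no in-paper argument to measure your sketch against. Your reconstruction is reasonable: establishing the second assertion first and then inducting on $k$ via minimal common refinements of compatible pairs is a natural architecture, and routing the ``local-to-global'' compatibility claim (and the uniqueness of a compatible pair's minimal common refinement) through Guirardel's core is a legitimate modern alternative to Scott and Swarup's original argument, which works through almost-invariant sets and intersection numbers of splittings. Two points you brush past carry more weight than ``routine bookkeeping'' suggests. Distinctness of the two one-edge collapses of a two-edge splitting needs an actual invariant (translation length functions, or the lattice of elliptic subgroups) rather than a case check against the four types of one-edge splittings, and one must remember that splittings are taken up to equivariant \emph{homeomorphism}, so the edge count already presupposes suppression of superfluous valence-two tree vertices. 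And your uniqueness step silently uses that the minimal common refinement of $S$ (with $k-1$ edge orbits) and a compatible $T_k$ not refined by $S$ has exactly $k$ edge orbits; that too comes from the core theory and should be said explicitly. The shape of the argument is right; the substance sits precisely in the steps you have deferred.
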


At times we will also need to know the existence of a uniform bound on the number of edges in maximally-cyclic splittings of $F_N$. This was shown by Bestvina and Feighn.

\begin{theo}(Bestvina--Feighn \cite{BF91})\label{bf}
There is a uniform bound (depending only on $N$) for the number of orbits of edges in a maximally-cyclic splitting of $F_N$.
\end{theo}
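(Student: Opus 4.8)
Since the number of edge orbits changes under subdivision, which preserves both the homeomorphism type of a splitting and its membership in $FZ_N^{max}$, it is enough to bound the number of edge orbits in a reduced representative, i.e.\ one in which no edge group coincides with an adjacent vertex group. The plan is then to reduce to the case of \emph{free} splittings by unfolding. Given a reduced maximally-cyclic splitting $T$ with a nontrivial edge stabilizer, Lemma~\ref{unfold} produces an unfoldable edge $e$, and the unfolding operation described after Lemma~\ref{unfold2} replaces the orbit $[e]$ by an orbit of edges with \emph{trivial} stabilizer, while leaving the stabilizers of all other edge orbits unchanged and keeping the total number of orbits fixed (a partial unfolding adds one orbit, the ensuing collapse of $e_2$ removes one). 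Iterating this, and collapsing any edge that comes to have group equal to an adjacent vertex group, one reaches after finitely many steps a free splitting $T_0$ with at most as many edge orbits as $T$.

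It then remains to bound the number of edge orbits of a reduced free splitting of $F_N$. By Grushko's theorem the ranks of the vertex groups sum to at most $N$, so there are at most $N$ vertices carrying a nontrivial group; and a reduced free splitting other than the $N$-petalled rose has no vertex with trivial group, since a trivial vertex group equals every adjacent edge group and hence forces an elementary collapse of any non-loop edge. Thus the underlying graph has at most $N$ vertices and first Betti number at most $N$, hence at most $2N-1$ edges; together with the previous paragraph this gives the desired bound.

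The step I expect to be the main obstacle is the control of the unfolding process. Unfolding a reduced splitting need not return a reduced splitting: an edge adjacent to the unfolding vertex $v$ can acquire group equal to the complementary free factor $A$ supplied by Lemma~\ref{unfold} when $A$ is infinite cyclic, forcing an elementary collapse, and one must show that only boundedly many such collapses can occur over the whole process, so that the edge count does not run away. Establishing this uniformly amounts to a proof of accessibility of $F_N$ over cyclic subgroups, which is exactly the content of Bestvina and Feighn's theorem in \cite{BF91} --- $F_N$ is finitely presented and cyclic subgroups are small, containing no copy of $F_2$ --- and it is for this reason that we simply quote their result.
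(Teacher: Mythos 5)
The paper does not prove this statement: it is imported wholesale as a cited result of Bestvina and Feighn, and nothing in the paper supplies or sketches an argument for it. So there is no internal proof to compare against.

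Your proposal is a sketch of the standard accessibility strategy (unfold nontrivial edge stabilizers using Lemma~\ref{unfold}, iterate until one reaches a free splitting, then bound free splittings using Grushko), and the Grushko count for reduced free splittings in your second paragraph is correct. But there is a presentational slip there: at that point you have only established $|T_0|\le |T|$ and $|T_0|\le 2N-1$, which together bound nothing about $|T|$. The sentence ``together with the previous paragraph this gives the desired bound'' is therefore premature --- the missing ingredient is an upper bound on the number of elementary collapses performed along the way, since $|T|$ equals $|T_0|$ plus that number. You do identify this correctly in the third paragraph as ``the main obstacle.'' More importantly, that obstacle is not a technicality that can be tidied up: bounding the number of collapses here \emph{is} accessibility of $F_N$ over cyclic subgroups, which is precisely the content of the theorem you are asked to prove. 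Your proposal therefore ends by citing \cite{BF91} to fill a gap which is the theorem itself, making the argument circular rather than a proof. You are candid about this, and to your credit the paper does exactly the same thing --- it simply cites \cite{BF91} --- so your proposal is best read as an honest explanation of what the cited theorem buys and where the real work lies, not as an independent proof.
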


Notice however that there is no bound on the number of orbits of edges in an arbitrary cyclic splitting of $F_N$. For example, the group $F_2=\langle a,b\rangle$ has arbitrarily long splittings of the form $F_2=\langle a\rangle\ast_{\langle a^2\rangle}\langle a^2\rangle\ast_{\langle a^4\rangle}\langle a^4\rangle\ast_{\langle a^8\rangle}\dots\ast\langle b\rangle$. 

\section{One-edge splittings and $\mathcal{G}$-maximal splittings}\label{sec-max}

Let $N\ge 3$. Throughout the section, we denote by $\mathcal{G}$ one of the graphs $FS_N$, $FZ_N$, $FZ_N^{max}$ or $VS_N$. The goal of this section is to provide a characterization of one-edge and $\mathcal{G}$-maximal splittings in terms of their combinatorics in $\mathcal{G}$. 

\begin{lemma}\label{maximal-valence}
All $\mathcal{G}$-maximal splittings have finite valence in $\mathcal{G}$.
\end{lemma}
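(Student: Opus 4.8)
The plan is to use the fact that a $\mathcal{G}$-maximal splitting has no neighbours "above" it, so that all of its neighbours are obtained by collapsing edges, and then to bound the number of such collapses by the (finite) number of edge-orbits. Let $T$ be a $\mathcal{G}$-maximal splitting and let $T'$ be a vertex of $\mathcal{G}$ adjacent to $T$. By definition of the graph $\mathcal{G}$, one of $T,T'$ is a proper refinement of the other; since $T$ is $\mathcal{G}$-maximal it admits no nontrivial refinement in $\mathcal{G}$, so $T'$ cannot refine $T$, and therefore $T$ is a proper refinement of $T'$. In other words, every neighbour of $T$ in $\mathcal{G}$ is obtained from $T$ by equivariantly collapsing a nonempty, $F_N$-invariant subforest which is not all of $T$.

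The key input is then that $T$ has only finitely many orbits of edges. This holds because the action of $F_N$ on $T$ is minimal (this is part of the definition of a splitting) and $F_N$ is finitely generated: a finitely generated group acting minimally on a tree has finite quotient, so $T/F_N$ is a finite graph; write $k$ for its number of edges. (For $\mathcal{G}=FZ_N^{max}$ or $\mathcal{G}=VS_N$ one could instead invoke the uniform bound of Theorem~\ref{bf}, but this is not available for $\mathcal{G}=FZ_N$, where, as the example at the end of Section~\ref{s:vs} illustrates, there is no uniform bound on the number of edge-orbits of a cyclic splitting; so the general minimality argument is what is really needed.) An $F_N$-invariant subforest of $T$ is exactly a union of $F_N$-orbits of edges, so there are precisely $2^k$ of them; collapsing each one gives at most $2^k$ splittings, among which $T$ itself (collapsing the empty subforest) and the trivial tree (collapsing everything) are not vertices of $\mathcal{G}$ adjacent to $T$. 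Hence $T$ has valence at most $2^k-2$ in $\mathcal{G}$, which is finite. One can also note, as a sanity check, that this count is consistent with Theorem~\ref{t:ss}, which identifies the one-edge collapses of $T$ with a set of size $k$.

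I do not expect a substantive obstacle. The only points requiring a little care are the justification that $T/F_N$ is finite — and the observation that this must be argued in general, not quoted from Theorem~\ref{bf}, precisely because $FZ_N$ contains splittings with arbitrarily many edge-orbits — together with the routine remark that collapsing edges of the minimal tree $T$ again yields a minimal tree, so that each nonempty proper collapse is a genuine splitting (equivalently, a vertex of $\mathcal{G}$, when it has the appropriate edge stabilizers). It is also worth emphasizing why $\mathcal{G}$-maximality cannot be dropped: a non-maximal splitting in $FZ_N$ may have infinitely many neighbours, coming from its infinitely many partial unfoldings (for instance the distinct refinements of $\langle a,b\rangle\ast_{[a,b]}\langle c,[a,b]\rangle$ in $F_3$), so the argument genuinely uses that all neighbours of $T$ lie below it.
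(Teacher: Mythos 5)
Your proof is correct and follows essentially the same approach as the paper: a $\mathcal{G}$-maximal splitting admits no proper refinement in $\mathcal{G}$, so every neighbour is a collapse, and these correspond to proper nonempty $F_N$-invariant subforests, of which there are only finitely many. You usefully make explicit the reason the number of edge-orbits is finite (minimality of the action together with finite generation of $F_N$), a point the paper's one-line proof leaves implicit; the rest is the same counting argument via Theorem~\ref{t:ss}.
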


\begin{proof}
A $\mathcal{G}$-maximal splitting $T$ is not properly refined by any splitting in $\mathcal{G}$, and the number of splittings it properly refines is equal to the number of proper subsets of orbits of edges in $T$ (see Theorem \ref{t:ss}), which is finite.
\end{proof}

\begin{lemma}\label{one-edge-valence}
All one-edge cyclic splittings of $F_N$ are compatible with infinitely many maximally-cyclic splittings of $F_N$. In particular, all one-edge splittings have infinite valence in $\mathcal{G}$.
\end{lemma}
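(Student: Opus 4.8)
The plan is to exhibit, for each of the four types of one-edge cyclic splitting listed above, an explicit infinite family of pairwise-distinct maximally-cyclic splittings that are compatible with it, and then to deduce infinite valence in $\mathcal{G}$ from Theorem~\ref{t:ss}. The cleanest route is to reduce everything to the free case. If $T$ is a one-edge free splitting, say $F_N=A\ast B$ with $A$ nontrivial and of rank $\ge 1$ (such a factor exists since $N\ge 3$ guarantees at least one side has rank $\ge 2$, or we can use the nonseparating case $F_N=C\ast$ with $C$ of corank one), I would produce infinitely many two-edge refinements by refining the vertex group $A$ (or $C$) along distinct free or $\mathbb{Z}$-splittings of $A$; since $\mathrm{rk}(A)\ge 2$ in the separating case after a suitable choice, and $\mathrm{rk}(C)=N-1\ge 2$ in the nonseparating case, the group $A$ (resp.\ $C$) itself admits infinitely many one-edge $\mathbb{Z}$-splittings, e.g.\ $\langle a,b,\ldots\rangle = \langle a\rangle \ast_{\langle a^2 \rangle} (\langle b,\ldots\rangle \ast \langle a^2\rangle)$ and variants, each of which has cyclic (indeed maximally-cyclic, after taking roots) edge stabilizer. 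Blowing up the vertex of $T$ with group $A$ along each of these yields a two-edge cyclic splitting refining $T$; checking these refinements are pairwise non-equivalent is a matter of looking at the quotient graph of groups. To land in $FZ_N^{max}$ or $VS_N$ one chooses the blow-up edge groups to be maximally cyclic and to have trivial tripod stabilizers (free splittings of the vertex group already suffice for $VS_N$), so the same family works for all four graphs.

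For the two $\mathbb{Z}$-splitting cases, say $F_N=A\ast_{\langle w\rangle}(B\ast\langle w\rangle)$ with $w\in A$, the edge already carries the nontrivial group $\langle w\rangle$; here I would instead refine the free-factor vertex, splitting $B\ast\langle w\rangle$ (or $C\ast\langle w^t\rangle$) along infinitely many one-edge free splittings of that group that keep $w$ (resp.\ $w^t$) in one of the pieces, so that the resulting two-edge tree still collapses onto $T$. Concretely, writing $B=\langle b_1,\ldots,b_k\rangle$ one can use the splittings $B\ast\langle w\rangle = \langle b_1,\ldots,b_{k-1}, w\rangle \ast \langle b_k u\rangle$ for $u$ ranging over an infinite set of elements of $\langle b_1,\ldots,b_{k-1},w\rangle$, giving pairwise-distinct free blow-ups; if $B$ is trivial one refines on the other side using that $A$ has rank $\ge 1$ and, together with the $N\ge 3$ hypothesis, enough room to produce infinitely many free or $\mathbb{Z}$-splittings fixing $w$. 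In every case the two-edge refinement is maximally-cyclic (both edge groups are either trivial or $\langle w\rangle$, already root-closed) with trivial tripod stabilizers, so it lies in $VS_N\subseteq FZ_N^{max}\subseteq FZ_N$ as well as being compatible with $T$ viewed in $FS_N$ when $T$ is free.

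Finally, once we have infinitely many maximally-cyclic splittings compatible with $T$, Theorem~\ref{t:ss} gives for each such $T'$ a genuine two-edge (or more) refinement having $T$ as one of its one-edge collapses; distinct $T'$ in our list give distinct refinements, each of which is joined to $T$ by an edge in $\mathcal{G}$ (a proper refinement of $T$). Hence $T$ has infinitely many neighbours, i.e.\ infinite valence. The main obstacle I anticipate is the bookkeeping needed to verify that the explicit families are genuinely infinite and pairwise non-equivalent as $F_N$-trees, and to handle the low-rank side of a separating splitting (where one vertex group may be as small as $\mathbb{Z}$); the $N\ge 3$ hypothesis is exactly what ensures that at least one side of any one-edge splitting has enough rank to carry infinitely many sub-splittings, so the argument should be organized around first selecting that side.
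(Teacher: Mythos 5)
Your treatment of the free and separating $\mathbb{Z}$-splitting cases is essentially the paper's: refine a vertex group by a free splitting that keeps the relevant edge-group generators elliptic, and check the resulting two-edge trees are pairwise distinct. (One small confusion: your proposed example $\langle a\rangle\ast_{\langle a^2\rangle}(\langle b,\ldots\rangle\ast\langle a^2\rangle)$ has edge group $\langle a^2\rangle$, which is not root-closed in $F_N$, so it does not lie in $FZ_N^{max}$ or $VS_N$; the hedge ``after taking roots'' does not repair this. But you also say free splittings of the vertex group suffice, and they do, so the free case is fine. Also, in the separating $\mathbb{Z}$ case $B$ is a nontrivial free factor by definition, so the ``$B$ trivial'' contingency you worry about never arises.)

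The genuine gap is in the nonseparating $\mathbb{Z}$-splitting case, $T=(C\ast\langle w^t\rangle)\ast_{\langle w\rangle}$, where you propose to blow up the vertex group $G_v=C\ast\langle w^t\rangle$ by ``free splittings keeping $w^t$ in one of the pieces.'' This is not sufficient: both half-edges of the loop are attached at $v$, one along $\langle w\rangle$ and one along $\langle w^t\rangle$, so a one-edge splitting of $G_v$ blows up to a refinement of $T$ only if \emph{both} $w$ and $w^t$ are elliptic in it. Worse, when $w$ is not contained in any proper free factor of $C$ (the ``bad'' case of Section~\ref{sec-end}), the only proper free factor of $G_v$ containing a conjugate of $w$ is a conjugate of $C$, and no conjugate of $w^t$ lies in $C$; so up to equivalence, $C\ast\langle w^t\rangle$ is the \emph{unique} free splitting of $G_v$ with both $w$ and $w^t$ elliptic. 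Your construction therefore produces a single two-edge refinement of $T$, not infinitely many. The paper sidesteps this by exhibiting, for each $g\in C$ that is not a proper power, the one-edge nonseparating $\mathbb{Z}$-splitting $(C\ast\langle g^{t^{-1}}\rangle)\ast_{\langle g\rangle}$ and observing that each of these is compatible with $T$; equivalently, one can blow up $v$ along the $\mathbb{Z}$-splittings $C\ast_{\langle g\rangle}(\langle g\rangle\ast\langle w^t\rangle)$ of $G_v$, in which both $w$ and $w^t$ are elliptic, rather than along free splittings. Your outline needs to be corrected in this way for the nonseparating case.
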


\begin{proof}
Let $T$ be a one-edge cyclic splitting of $F_N$. If $T$ is a free splitting, then as $N\ge 3$, one of the vertex groups of $T$ has rank at least $2$, and splitting this vertex group yields infinitely many distinct proper refinements of $T$ (in particular $T$ is compatible with infinitely many free splittings).

Suppose that $T$ is a separating one-edge $\mathbb{Z}$-splitting of the form $F_N=A\ast_{\langle w\rangle}(B\ast\langle w\rangle)$, where $A$ and $B$ are complementary proper free factors of $F_N$, and $w\in A$. Let $\{b_1,\dots,b_k\}$ be a free basis of $B$. Then the splittings $A\ast_{\langle w\rangle}\langle w\rangle\ast \langle b_1w^i,b_2,\dots,b_k\rangle$, where $i$ varies in $\mathbb{N}$, yield infinitely many distinct two-edge refinements of $T$ (in particular $T$ is compatible with infinitely many free splittings).

Finally, assume that $T$ is a nonseparating one-edge $\mathbb{Z}$-splitting of the form $F_N=(C\ast\langle w^t\rangle)\ast_{\langle w\rangle}$, where $C$ is a corank one free factor of $F_N$, and $w\in C$, and $t$ denotes the stable letter. Then for each $g\in C$ which is not a proper power, the splitting $F_N=(C\ast\langle g^{t^{-1}}\rangle)\ast_{\langle g\rangle}$ is compatible with $T$, and this yields infinitely many distinct two-edge refinements of $T$ (in particular $T$ is compatible with infinitely many maximally-cyclic splittings).
\end{proof}

\begin{lemma}\label{maximal-one-edge}
For all $T\in \mathcal{G}$, the following conditions are equivalent.

\begin{itemize}
\item The splitting $T$ is either a one-edge splitting or a $\mathcal{G}$-maximal splitting.
\item There exist splittings $T_1$ and $T_2$ of $F_N$ such that \begin{itemize} \item $d_{\mathcal{G}}(T,T_1)=d_{\mathcal{G}}(T,T_2)=1$, and \item $d_{\mathcal{G}}(T_1,T_2)=2$, and \item $T_1-T-T_2$ is the unique path of length $2$ joining $T_1$ to $T_2$ in $\mathcal{G}$.\end{itemize}
\end{itemize}
\end{lemma}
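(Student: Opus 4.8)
The plan is to prove both directions of the equivalence, using Theorem~\ref{t:ss} (Scott--Swarup) as the main combinatorial engine.

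For the forward direction, suppose first that $T$ is a one-edge splitting. By Lemma~\ref{one-edge-valence} it has infinite valence, so we must build the configuration by hand rather than by counting. I would pick any two distinct refinements $T_1, T_2$ of $T$ that are two-edge splittings obtained from $T$ by splitting one of its vertex groups, chosen so that they are \emph{incompatible} with each other -- concretely, split a rank-$\geq 2$ vertex group (or the $B$-factor, or the complementary factor $C$, as in the three cases of the proof of Lemma~\ref{one-edge-valence}) in two ``transverse'' ways. Then $d_{\mathcal{G}}(T,T_i)=1$; we need $d_{\mathcal{G}}(T_1,T_2)=2$ and uniqueness of the midpoint. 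If $S$ is any common neighbour of $T_1$ and $T_2$, then $S$ is either a refinement or a collapse of each. Since $T_1, T_2$ are incompatible, no common \emph{refinement} exists, so $S$ must be a collapse of at least one of them, say of $T_1$; as $T_1$ has exactly two edge orbits, $S$ is either $T$ or the other one-edge collapse of $T_1$. The refinements should be arranged (this is the point where a small explicit choice is needed) so that the ``other'' one-edge collapse of $T_1$ is not a collapse of $T_2$ and is not refined by $T_2$; then $S=T$ is forced. This is the step I expect to require the most care: verifying that one can always choose the two refinements so that their non-$T$ one-edge collapses are distinct from each other and each incompatible with the opposite refinement. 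One clean way is to choose $T_1, T_2$ to be \emph{three-edge-compatible with $T$ in a tree where the $T$-edge is a ``cut'' separating the two new edges}, so that collapsing away from $T$ destroys the splitting.

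Now suppose instead that $T$ is $\mathcal{G}$-maximal. Then $T$ admits no proper refinement in $\mathcal{G}$, so every neighbour of $T$ is a proper \emph{collapse} of $T$, i.e. corresponds to a proper nonempty subset of the orbits of edges of $T$; by Theorem~\ref{t:ss}, $T$ refines exactly its $k$ one-edge collapses $e_1, \dots, e_k$ (writing $T/e_i$ for the collapse of all orbits except the $i$-th). I would take $T_1 = T/e_1$ and $T_2 = T/e_2$, two distinct one-edge collapses. These are compatible (both collapse from $T$), so the unique common refinement they admit (Theorem~\ref{t:ss}) is the two-edge splitting $T^{(12)}$ obtained by collapsing all but the first two edge orbits of $T$; since $T^{(12)}$ still has an edge to collapse when $k\geq 3$, or equals $T$ when $k=2$, in either case $d_{\mathcal{G}}(T_1,T_2)\leq 2$, and it equals $2$ because $T_1\neq T_2$ and $T_1,T_2$ are one-edge so they are not adjacent (a one-edge splitting has no proper collapse, and a common refinement of two distinct one-edge splittings has at least two edges). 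For uniqueness of the midpoint: a common neighbour $S$ of $T_1$ and $T_2$ is a refinement or a collapse of each; $T_1, T_2$ have no proper collapse, so $S$ refines both, hence $S$ is a common refinement of $T_1$ and $T_2$ and therefore $S = T^{(12)}$ by the uniqueness clause of Theorem~\ref{t:ss}. Thus the midpoint is unique. (When $k=2$ we have $T^{(12)} = T$ and the configuration is $T_1 - T - T_2$ directly.)

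For the converse, suppose $T$ is neither one-edge nor $\mathcal{G}$-maximal. Then $T$ has $k\geq 2$ edge orbits \emph{and} a proper refinement $\widehat{T}\in\mathcal{G}$. Given any $T_1, T_2$ with $d_{\mathcal{G}}(T,T_i)=1$ and $d_{\mathcal{G}}(T_1,T_2)=2$, I will produce a second length-two path. Each $T_i$ is a refinement or collapse of $T$. If some $T_i$, say $T_1$, is a proper collapse of $T$, then $T$ properly refines $T_1$, and since $T$ is not $\mathcal{G}$-maximal, pick the proper refinement $\widehat{T}$; then $T_1 - T - \widehat{T}$ and, if $\widehat{T}$ can be chosen not adjacent to $T_2$, we must still exhibit an alternate midpoint -- instead the cleaner argument is: $T$ has $k \geq 2$ orbits, so it has at least two distinct one-edge collapses $U_1 \neq U_2$; both $U_1$ and $U_2$ are neighbours of $T$, and at least one of them, together with $T$, is \emph{not} the unique geodesic issue -- more precisely, if $T_1, T_2$ are two neighbours realizing $d(T_1,T_2)=2$ via the midpoint $T$, then because $T$ has $\geq 2$ edge orbits it has another neighbour $U$ (a different collapse or, using non-maximality, a proper refinement) and I claim $U$ can be inserted: $d(T_1,U)\leq d(T_1,T)+d(T,U) = 2$ and similarly for $T_2$, and generically $d(T_1,U)=d(T_2,U)=2$ unless $U$ happens to be adjacent to $T_1$ or $T_2$. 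To kill this last possibility I would use non-maximality to choose $U = \widehat{T}$ a proper refinement of $T$ of \emph{large} complexity (using Lemma~\ref{one-edge-valence}-style freedom when $T$ sits above a $\mathbb{Z}$- or free edge, or simply refining a positive-rank vertex group in infinitely many ways), so that $\widehat{T}$ is distinct from and incompatible with both $T_1$ and $T_2$; then $T_1 - T - \widehat{T}$ and $T_2 - T - \widehat{T}$ are two distinct length-two paths between the relevant endpoints, and in particular $T - T_1 \cdots$ the midpoint between $T_1$ and $T_2$ is not unique. The delicate point, again, is the existence of such a $\widehat{T}$ avoiding finitely many ``bad'' adjacencies; since $\mathcal{G}$-maximality fails, $T$ has a proper refinement, and one leverages the fact (as in Lemmas~\ref{one-edge-valence} and \ref{maximal-valence}) that whenever a proper refinement exists there are infinitely many, only finitely many of which can be compatible with a fixed $T_i$, so a good choice of $\widehat{T}$ exists. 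This verification -- that non-maximality upgrades to \emph{infinitely many} proper refinements -- is the main obstacle in the converse, and I expect it to follow from Lemma~\ref{unfold} together with the case analysis in the proof of Lemma~\ref{one-edge-valence}.
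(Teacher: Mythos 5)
Your forward direction for the $\mathcal{G}$-maximal case has a genuine error: taking $T_1=T/e_1$ and $T_2=T/e_2$ to be \emph{one-edge} collapses does not yield a unique midpoint when $T$ has $k\geq 3$ orbits of edges. In that case the two-edge splitting $T^{(12)}$ you construct, and also every intermediate collapse of $T$ retaining both edge orbits $1$ and $2$, and $T$ itself, are all common refinements of $T_1$ and $T_2$ and all sit at distance $1$ from each of them; Theorem~\ref{t:ss} only says the $2$-edge common refinement is unique, not that there are no common refinements with more edges. So with your choice there are several length-two paths from $T_1$ to $T_2$ and the configuration fails. The paper avoids this by splitting the edge orbits into a proper nonempty subset $E_1$ and its complement $E_2$ and setting $T_i$ to be the collapse of $E_i$; then any common refinement $S$ must refine every one-edge collapse of $T_1$ and of $T_2$, hence all $k$ of them, hence $S$ refines $T$, and $\mathcal{G}$-maximality forces $S=T$.

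The converse direction is also off track. You want to show that if $T$ is neither one-edge nor $\mathcal{G}$-maximal, then for any neighbours $T_1, T_2$ with $d(T_1,T_2)=2$ there is a \emph{second} common neighbour; but to do that you need a vertex $U$ at distance $1$ from \emph{both} $T_1$ and $T_2$, whereas you deliberately try to pick $\widehat{T}$ \emph{incompatible} with $T_1$ and $T_2$, which puts it at distance $\geq 2$ from them and gives you nothing. The paper's argument is a clean three-case analysis on whether each $T_i$ is a refinement or a collapse of $T$: the mixed case is impossible (it would make $T_1$ and $T_2$ adjacent); when $T$ refines both, any proper refinement $T'$ of $T$ (which exists if $T$ is not $\mathcal{G}$-maximal) is automatically a refinement of both $T_i$, hence a second midpoint; when both $T_i$ refine $T$, any proper collapse $T'$ of $T$ (which exists if $T$ is not one-edge) is automatically a collapse of both $T_i$, again a second midpoint. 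No incompatibility conditions are needed; the adjacency you want is automatic by transitivity of the collapse relation.

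Finally, in the one-edge forward case you postulate two incompatible two-edge refinements without establishing they exist, and your worry about the ``other'' one-edge collapses being distinct is unnecessary (it follows automatically from the uniqueness clause of Theorem~\ref{t:ss}: distinct two-edge splittings cannot share their full set of one-edge collapses). The paper instead gets existence of the incompatible pair from Lemma~\ref{one-edge-valence} combined with the Bestvina--Feighn bound (Theorem~\ref{bf}): if all the infinitely many compatible two-edge refinements were pairwise compatible one could build maximally-cyclic splittings with arbitrarily many edges, a contradiction.
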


\begin{proof}
If $T$ is a $\mathcal{G}$-maximal splitting, then any $F_N$-equivariant partition of the set of edges of $T$ into two subsets $E_1$ and $E_2$ gives rise to distinct splittings $T_i$, obtained by equivariantly collapsing edges in $E_i$ to points, that satisfy the desired properties by Theorem \ref{t:ss}. If $T$ is a one-edge splitting in $\mathcal{G}$, Lemma \ref{one-edge-valence} shows that it is compatible with infinitely many distinct one-edge maximally-cyclic splittings of $F_N$. If all these splittings were pairwise compatible, Theorem \ref{t:ss} would enable us to construct maximally-cyclic splittings of $F_N$ with arbitrarily large numbers of orbits of edges. This would contradict Theorem \ref{bf}. Therefore, we can find a pair of two-edge proper refinements $T_1$ and $T_2$ of $T$ which are not compatible. Such trees satisfy the desired properties. 

Conversely, assume that there exist splittings $T_1$ and $T_2$ satisfying the conclusions of the lemma. 
\begin{itemize}
\item If $T_1$ is properly refined by $T$ and $T$ is properly refined by $T_2$, then $T_1$ is properly refined by $T_2$, so $d_{\mathcal{G}}(T_1,T_2)=1$, a contradiction. 

\item If $T_1$ and $T_2$ are both properly refined by $T$, then $T$ is $\mathcal{G}$-maximal, otherwise we could find a proper refinement $T'$ of $T$ in $\mathcal{G}$, and get two paths of length $2$ joining $T_1$ to $T_2$ in $\mathcal{G}$ (namely, the path going through $T$ and the path going through $T'$). 

\item If $T$ is properly refined by both $T_1$ and $T_2$, then $T$ is a one-edge splitting, otherwise we could find a splitting $T'$ in $\mathcal{G}$ that is properly refined by $T$, and get two different paths of length $2$ joining $T_1$ to $T_2$ as above.  
\end{itemize}
As $d_{\mathcal{G}}(T,T_1)=d_{\mathcal{G}}(T,T_2)=1$, up to exchanging $T_1$ and $T_2$ one of the above cases occurs, and the claim follows.
\end{proof}

As a consequence of Lemmas \ref{maximal-valence}, \ref{one-edge-valence} and \ref{maximal-one-edge}, we get the following result.

\begin{prop}\label{max-preserved}
Any isometry of $\mathcal{G}$ preserves the sets of one-edge splittings and of $\mathcal{G}$-maximal splittings (setwise).
\qed
\end{prop}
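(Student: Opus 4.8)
The plan is to extract from Lemmas \ref{maximal-valence}, \ref{one-edge-valence} and \ref{maximal-one-edge} a purely metric description of the vertices we wish to control, after which isometry-invariance is automatic. Let $\Phi$ be an isometry of $\mathcal{G}$. First I would observe that the second condition in Lemma \ref{maximal-one-edge} is phrased entirely in terms of distances in $\mathcal{G}$ and of the number of length-two paths joining two given vertices; since $\Phi$ induces a bijection on vertices preserving adjacency, it carries the set of length-two paths between $T_1$ and $T_2$ bijectively onto the set of length-two paths between $\Phi(T_1)$ and $\Phi(T_2)$, so it preserves that condition. Consequently $\Phi$ preserves (setwise) the set $S$ of vertices of $\mathcal{G}$ satisfying it, which by Lemma \ref{maximal-one-edge} is exactly the set of splittings that are either one-edge splittings or $\mathcal{G}$-maximal.

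It then remains to separate the two types inside $S$, and for this I would use the valence, which an isometry preserves: $\Phi$ takes a vertex of valence $n$ to a vertex of valence $n$, for every (finite or infinite) cardinal $n$. By Lemma \ref{one-edge-valence} every one-edge splitting has infinite valence in $\mathcal{G}$, whereas by Lemma \ref{maximal-valence} every $\mathcal{G}$-maximal splitting has finite valence. In particular the two subsets of $S$ are disjoint, $S$ is their disjoint union, and they are distinguished from one another precisely by whether the valence is finite or infinite. Hence $\Phi$ maps the infinite-valence elements of $S$ onto the infinite-valence elements of $\Phi(S)=S$, i.e.\ it preserves the set of one-edge splittings setwise; and it likewise preserves the complementary set of $\mathcal{G}$-maximal splittings.

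There is no real obstacle left once the three lemmas are available; the proposition is essentially a bookkeeping consequence. The only conceptually substantive ingredient — already established in Lemma \ref{maximal-one-edge} — is that the metric condition there (including the uniqueness of the path $T_1-T-T_2$) is genuinely equivalent to being one-edge-or-$\mathcal{G}$-maximal. Granting that equivalence and the valence dichotomy, the argument above is immediate.
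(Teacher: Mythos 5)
Your proof is correct and follows exactly the route the authors intend: the paper leaves Proposition \ref{max-preserved} as an immediate consequence of Lemmas \ref{maximal-valence}, \ref{one-edge-valence} and \ref{maximal-one-edge}, and you have spelled out precisely that intended argument — the path-uniqueness condition of Lemma \ref{maximal-one-edge} is an isometry-invariant characterization of the union of the two sets, and the finite/infinite valence dichotomy from the other two lemmas separates them.
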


\begin{rk}\label{rk-maximal-one-edge}
A free splitting of $F_N$ is maximal among free splittings of $F_N$ if and only if it is maximal among cyclic splittings of $F_N$ (such a splitting has trivial vertex stabilizers, and hence cannot be refined by a splitting having nontrivial edge stabilizers). Hence we can talk about "maximal free splittings" without any ambiguity.
\end{rk}

\begin{rk}\label{edge-inv}
As the number of edges of $T/F_N$ is equal to the number of one-edge splittings adjacent to $T$ (Theorem \ref{t:ss}), any isometry of $\mathcal{G}$ preserves the number of edge orbits in each splitting. As a consequence, the property that $T'$ is obtained from $T$ by either collapsing or adding a fixed number of edges is preserved under an isometry of $\mathcal{G}$. 
\end{rk}

\section{Invariance of the free splitting graph}

The first step in our proof of Theorem~A is to show that an isometry of the graph $FZ_N$, $FZ_N^{max}$ or $VS_N$ preserves the subgraph $FS_N$ of free splittings of $F_N$ setwise. We do this by distinguishing maximal free splittings from maximal splittings which have at least one nontrivial edge stabilizer. We first look at the graph $FZ_N$. In this situation the argument is much simpler as all maximal splittings are free. 

\subsection{The case of $FZ_N$.}

\begin{prop}\label{max-free}
All $FZ_N$-maximal splittings are free splittings.
\end{prop}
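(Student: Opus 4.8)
The plan is to argue by contradiction: suppose $T$ is an $FZ_N$-maximal splitting that is not a free splitting, so $T$ has some nontrivial edge stabilizer. By Lemma~\ref{unfold}, there is an unfoldable edge $e$ adjacent to a vertex $v$ with $G_v = G_e \ast A$. Since $T$ is $FZ_N$-maximal, it admits no proper refinement in $FZ_N$; but the decomposition $G_v = G_e \ast A$ (together with the containment of the other adjacent edge groups in $A$ up to conjugacy) always produces a refinement. The point is that this refinement is a \emph{cyclic} splitting — the new edge we introduce to split $G_v$ has trivial stabilizer — so it lies in $FZ_N$. Hence the refinement must be trivial, i.e. the $G_e$-factor must already be ``visible'' at $v$, which forces a structural constraint on $T$ that I will push to a contradiction.

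Concretely, I would invoke Lemma~\ref{unfold2}: since $T$ is $FZ_N$-maximal and has a nontrivial edge stabilizer, the partial unfolding $T'$ of Lemma~\ref{unfold2} must equal $T$ itself (any genuine refinement would be a proper refinement in $FZ_N$, contradicting maximality). Lemma~\ref{unfold2} then tells us that in $T/F_N$ the unfoldable edge $e$ already has a vertex $v$ of valence two whose other adjacent edge $e_1$ has trivial stabilizer. Now collapse $e$ in $T$: this gives a splitting $T''$ with strictly fewer edge orbits, all of whose edge stabilizers are still cyclic, so $T'' \in FZ_N$, and $T''$ is a proper collapse of $T$. But then $T$ is a proper refinement of $T''$ — this is fine, it does not yet contradict maximality (maximality forbids $T$ being \emph{refined}, not $T$ \emph{refining}). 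The actual contradiction comes from re-folding: starting from $T''$, the vertex that was the image of $e_1 \cup e$ carries a free-product decomposition, and one can fold along a \emph{different} complementary free factor to produce a splitting $\widehat{T} \neq T$ with the same number of edges as $T$, also in $FZ_N$, and refining $T''$; more usefully, one produces an honest refinement of $T$ in $FZ_N$ by splitting the valence-two vertex's neighbor, using $N \ge 3$ to guarantee enough rank. I would organize this so that the nontrivial edge stabilizer, combined with $N\ge 3$, always leaves room to further refine within $FZ_N$, directly contradicting $FZ_N$-maximality.

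The cleanest route, which I would actually write: since $T$ has a nontrivial edge stabilizer and is $FZ_N$-maximal, Lemma~\ref{unfold2} gives a valence-two vertex $v$ in $T/F_N$ with a trivial-stabilizer edge $e_1$ and nontrivial-stabilizer edge $e_2$. The far endpoint of $e_2$ (away from $v$) has some vertex group $H$ containing a conjugate of $G_{e_2}$, and the far endpoint of $e_1$ has vertex group $K$. Because $N \ge 3$ and edge groups are cyclic, a rank count shows that either $H$ or $K$ has rank at least $2$ relative to the incident edge groups, so it admits a free splitting rel those edge groups; refining $T$ at that vertex produces a proper cyclic refinement of $T$, contradicting maximality. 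The main obstacle is making the ``relative rank $\ge 2$'' bookkeeping airtight — one must verify that the free-product decomposition furnished by Lemma~\ref{unfold} can genuinely be carried out \emph{within} the ambient splitting without collapsing other edges, i.e. that the local refinement at a single vertex is compatible with the rest of $T$, and that $N \ge 3$ is exactly what prevents the degenerate low-rank cases. This is where I expect to spend the most care; everything else is a direct application of Lemmas~\ref{unfold} and~\ref{unfold2} together with Theorem~\ref{t:ss}.
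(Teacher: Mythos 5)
There is a genuine gap. You correctly reduce, via Lemma~\ref{unfold2} and $FZ_N$-maximality, to the situation where $T/F_N$ has a valence-two vertex $v$ with nontrivial cyclic stabilizer $G_v$, incident to a trivial-stabilizer edge $e_1$ and a nontrivial-stabilizer edge $e_2$. But the route you then propose — a ``rank count'' at the far endpoints $H$ and $K$ of $e_1$ and $e_2$ showing that one of them admits a further free splitting relative to its incident edge groups — does not go through. There is no reason for $H$ or $K$ to be splittable relative to its peripheral structure: $H$ could itself be cyclic, or a surface-type vertex group of rank $2$ with a single incident edge group generated by a commutator, etc. You acknowledge this is ``where you expect to spend the most care,'' but the care cannot be supplied: the claim is false in general, and more to the point, the argument would apply equally to $FZ_N^{max}$-maximal splittings, for which the proposition genuinely fails (the paper exhibits a non-free $FZ_N^{max}$-maximal splitting in Figure~\ref{fig-vsmax}). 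Any correct proof must use something specific to $FZ_N$.

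That specific ingredient, which you never invoke, is that $FZ_N$ allows \emph{non-maximal} cyclic edge stabilizers. The paper's proof ends the argument in one line at exactly the point you reached: take $g \in G_v$ generating a proper subgroup of the cyclic group $G_v$ (e.g.\ the square of a generator), and equivariantly partially fold the trivial-stabilizer edge $e_1$ with $ge_1$. This inserts a new edge with stabilizer $\langle g\rangle$, a proper cyclic (but not maximally-cyclic) subgroup, producing a proper refinement of $T$ that still lies in $FZ_N$, contradicting maximality. No rank count, no examination of the far vertex groups, and no appeal to $N\ge3$ is needed; the refinement happens entirely at $v$. This fold is also precisely the operation that is \emph{forbidden} in $FZ_N^{max}$ and $VS_N$, which is why the proposition is special to $FZ_N$ and why the later sections require a more refined argument.
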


\begin{proof}
Assume towards a contradiction that there exists a cyclic splitting $T$ of $F_N$ that is $FZ_N$-maximal, and has an edge with nontrivial stabilizer. Lemma \ref{unfold2} implies that $T$ contains a vertex $v$ with nontrivial stabilizer $G_v$, which projects to a vertex of valence $2$ in the quotient graph $T/F_N$, and is adjacent to an edge $e$ with trivial stabilizer in $T$. By taking a proper power of a generator, we can find $g\in G_v$ that generates a proper subgroup of $G_v$, and partially fold $e$ along $ge$ in an equivariant way, to obtain a proper refinement of $T$. This contradicts $FZ_N$-maximality of $T$. 
\end{proof} 

\begin{lemma}\label{adjacent-max}
A splitting of $F_N$ is a free splitting if and only if it is at distance at most one from a maximal free splitting in $FZ_N$.
\end{lemma}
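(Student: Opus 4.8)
The strategy is to prove both implications separately. The forward direction is the delicate one. Suppose $T$ is a free splitting; I want to exhibit a maximal free splitting $\hat T$ with $d_{FZ_N}(T,\hat T)\le 1$. If $T$ is already maximal we are done, so assume $T$ is not maximal. Since $T$ is a free splitting, its vertex stabilizers are free factors of $F_N$, and any refinement obtained by splitting vertex groups along free splittings stays a free splitting. The key point is that any free splitting admits a refinement to a \emph{maximal} free splitting (one with trivial vertex stabilizers, i.e.\ vertex groups all equal to $F_1$ or, for separating structure, to rank-one groups — a Grushko-type maximal tree); this $\hat T$ refines $T$, so $d_{FZ_N}(T,\hat T)=1$ (it is a proper refinement since $T$ was not maximal), and by Remark \ref{rk-maximal-one-edge} a free splitting that is maximal among free splittings is maximal among all cyclic splittings, hence $FZ_N$-maximal. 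Here I would invoke Proposition \ref{max-free} to identify "maximal among cyclic splittings" with "$FZ_N$-maximal among free splittings", so $\hat T$ is genuinely a vertex at distance one from $T$ in $FZ_N$.

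For the converse, suppose $T$ lies at distance at most one from some maximal free splitting $\hat T$ in $FZ_N$. If $T=\hat T$ there is nothing to prove. Otherwise $T$ and $\hat T$ are joined by an edge, so one is a proper refinement of the other. If $\hat T$ is a proper refinement of $T$, then $T$ is obtained from $\hat T$ by collapsing edges; collapsing edges of a free splitting yields a free splitting (edge stabilizers only shrink, and vertex stabilizers of the collapse are generated by vertex stabilizers along the collapsed subtree, which are trivial in $\hat T$ — in fact they remain free factors), so $T$ is a free splitting. If instead $T$ is a proper refinement of $\hat T$, this is impossible: $\hat T$ is $FZ_N$-maximal by Proposition \ref{max-free} and Remark \ref{rk-maximal-one-edge}, so it admits no proper refinement in $FZ_N$ at all. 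Hence $T$ must be a free splitting.

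The main obstacle is the forward direction, specifically the claim that every free splitting refines to a maximal free splitting \emph{that is itself a vertex of $FZ_N$ adjacent to $T$}: one must check both that such a maximal refinement exists (this is essentially the existence of a Grushko decomposition refining a given free splitting, splitting each free-factor vertex group into a free product of rank-one and trivial pieces) and that it counts as an $FZ_N$-maximal splitting, for which Proposition \ref{max-free} and Remark \ref{rk-maximal-one-edge} are exactly what is needed. One also needs $N\ge 3$ implicitly only to ensure these graphs are nondegenerate, but the existence of a maximal free splitting refining $T$ holds for any $T$ that is not already maximal. I expect the write-up to be short, with the real content being the correct bookkeeping of which notion of "maximal" is in play.
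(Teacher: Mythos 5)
Your proof is correct and follows essentially the same route as the paper's: for the converse, a maximal free splitting (which has trivial vertex stabilizers, Remark~\ref{rk-maximal-one-edge}) admits no proper refinement in $FZ_N$, and any of its collapses remains free; for the forward direction, refine $T$ to a maximal free splitting. The only cosmetic difference is that you produce the maximal refinement via a Grushko-type construction, while the paper invokes the uniform bound on the number of edges in a free splitting (the $3N-3$ count, or Theorem~\ref{bf}); also note a small slip in your parenthetical — maximal free splittings have \emph{trivial} vertex stabilizers, not rank-one ($F_1$) ones — which does not affect the argument since you correctly rely on Remark~\ref{rk-maximal-one-edge} for the key fact.
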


\begin{proof}
A splitting having a nontrivial edge stabilizer cannot be refined by a free splitting. We need to show that any free splitting enlarges to a maximal one. This follows from the existence of a bound on the number of edges in a free splitting of $F_N$ (it is well known that maximal free splittings have $3N -3$ edges but an abstract bound also follows from Theorem~\ref{bf}).
\end{proof}

\begin{cor}\label{fz-fs}
Any isometry of $FZ_N$ preserves the subgraph $FS_N$ (setwise).
\end{cor}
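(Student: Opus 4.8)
The plan is to assemble the three preceding results, with no new ingredient required. First I would identify the set of $FZ_N$-maximal splittings with the set of maximal free splittings. On one hand, Proposition~\ref{max-free} shows that every $FZ_N$-maximal splitting is a free splitting. On the other hand, a maximal free splitting has trivial vertex stabilizers, hence admits no refinement with a nontrivial edge stabilizer, so it is already $FZ_N$-maximal; this is the observation recorded in Remark~\ref{rk-maximal-one-edge}. Thus ``$FZ_N$-maximal'' and ``maximal free splitting'' cut out exactly the same subset of vertices of $FZ_N$.

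Next, let $\Phi$ be an isometry of $FZ_N$. Proposition~\ref{max-preserved} tells us that $\Phi$ preserves the set of $FZ_N$-maximal splittings setwise, and by the previous step this means that $\Phi$ preserves the set of maximal free splittings setwise. Finally I would invoke Lemma~\ref{adjacent-max}, which gives a purely metric description of $FS_N$ inside $FZ_N$: a vertex lies in $FS_N$ if and only if it is at $FZ_N$-distance at most one from some maximal free splitting. Since $\Phi$ is an isometry and preserves the set of maximal free splittings, it preserves the set of vertices at distance at most one from that set; as this set is precisely $FS_N$, we conclude $\Phi(FS_N)=FS_N$, as desired.

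There is essentially no obstacle here: all of the genuine content has been placed in Propositions~\ref{max-preserved} and~\ref{max-free} and Lemma~\ref{adjacent-max}, and this corollary is just the bookkeeping step that combines them. The only point worth double-checking is the identification ``$FZ_N$-maximal $=$ maximal free splitting'', which rests on Remark~\ref{rk-maximal-one-edge} and on the fact that, by definition, $FS_N$ is a subgraph of $FZ_N$ so that distances in $FS_N$ need not be compared with distances in $FZ_N$ (we only ever use the $FZ_N$-metric).
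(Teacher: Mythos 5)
Your proof is correct and follows essentially the same route as the paper's: combine Proposition~\ref{max-preserved} and Proposition~\ref{max-free} to see that an isometry preserves the set of maximal free splittings, then apply Lemma~\ref{adjacent-max}. The only difference is that you explicitly cite Remark~\ref{rk-maximal-one-edge} to justify the converse inclusion (maximal free implies $FZ_N$-maximal), a point the paper leaves implicit; this is a worthwhile clarification but not a different argument.
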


\begin{proof}
Propositions \ref{max-preserved} and \ref{max-free} imply that any isometry of $FZ_N$ preserves the set of maximal free splittings of $F_N$. The claim then follows from Lemma \ref{adjacent-max}.
\end{proof}

Proposition \ref{max-free} is no longer true for the graphs $FZ_N^{max}$ and $VS_N$, so in these cases we need a more refined argument to distinguish maximal free splittings from other $\mathcal{G}$-maximal splittings. For example, the splitting displayed on Figure \ref{fig-vsmax} is both $VS_N$-maximal and $FZ_N^{max}$-maximal. However, it fails to be $FZ_N$-maximal (one can add an extra cyclic edge with stabilizer $(cd)^2$ into the middle of the splitting).

\begin{figure}
\begin{center}
\includegraphics{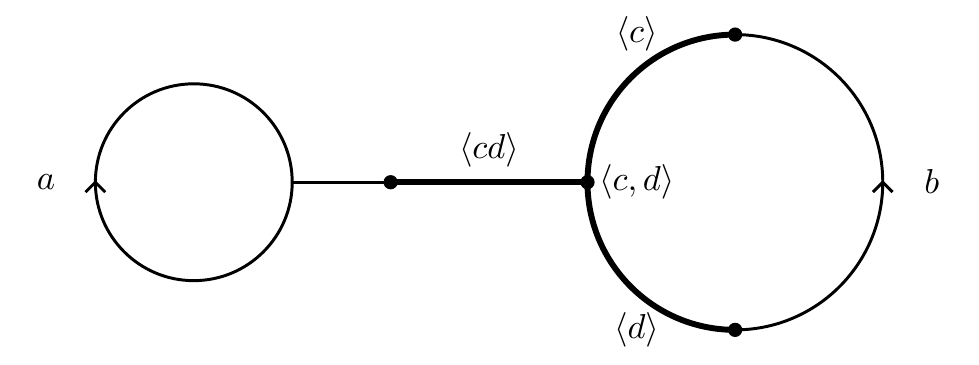}
\caption{A $VS_4$-maximal tree with nontrivial edge stabilizers.}
\label{fig-vsmax}
\end{center}
\end{figure}

\subsection{The case of $VS_N$.}

We saw in Section~\ref{sec-max} that $VS_N$-maximal splittings have finite valence in $VS_N$. We now distinguish maximal free splittings from other $VS_N$-maximal splittings, by using the following property: if a maximal splitting is free then collapsing one orbit of edges still gives a splitting of finite valence. In contrast, this is not satisfied by $VS_N$-maximal splittings that contain an edge with nontrivial stabilizer.

\begin{prop} \label{maximal-sphere}
Let $T$ be a maximal free splitting of $F_N$. Any splitting obtained by equivariantly collapsing one edge in $T$ has finite valence in $FZ_N^{max}$ (hence also in $VS_N$).
\end{prop}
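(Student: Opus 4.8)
The plan is to analyze what a splitting $S$ obtained from a maximal free splitting $T$ by collapsing one orbit of edges can look like, and then bound the number of ways it can be refined within $FZ_N^{max}$. Since $T$ is a maximal free splitting it has trivial vertex stabilizers and (as recalled in Lemma~\ref{adjacent-max}) exactly $3N-3$ orbits of edges; collapsing one orbit of edges yields a free splitting $S$ which is either still maximal, or has exactly one vertex orbit whose stabilizer is a free group of rank $2$ (if the collapsed edge is nonseparating in the quotient, or separating with one side a rank-one factor) — in all cases the non-cyclic vertex groups of $S$ have rank exactly $2$. So $S$ is a free splitting, all of whose vertex groups are free of rank at most $2$, and I want to count the refinements of $S$ lying in $FZ_N^{max}$.

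First I would observe that any refinement $S'$ of $S$ in $FZ_N^{max}$ is obtained by $F_N$-equivariantly replacing vertices of $S$ by maximally-cyclic splittings of the corresponding vertex groups (respecting the incident edge groups), and since the vertex stabilizers of $S$ are free of rank $\le 2$, there is essentially one vertex orbit $[v]$ to worry about, with $G_v \cong F_2$. A refinement of $S$ then corresponds to a one-edge (or at most finitely many edges, by Theorem~\ref{bf} applied to $G_v \cong F_2$) maximally-cyclic splitting of $F_2 = \langle a, b\rangle$ in which, crucially, the finitely many incident edge groups at $v$ (which are all \emph{trivial} in a free splitting) map into vertex groups. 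Because the incident edges carry trivial stabilizers, this imposes no real constraint beyond the attaching-point combinatorics, so what I really need is: the set of maximally-cyclic splittings of $F_2$ is finite up to the natural $\mathrm{Out}(F_2)$-action and, more to the point, the number of \emph{distinct} equivalence classes of maximally-cyclic splittings of $F_2$ that refine a given free splitting of $F_2$ is finite. Here I would invoke the description of maximally-cyclic splittings of $F_2$ (as in \cite{CV91}, cited in the introduction: $FZ_2^{max}$ is the Farey graph with dead ends and fins): up to the action of the stabilizer of the relevant free splitting of $F_2$, there are only finitely many ways to refine, because a refinement of a given one-edge free splitting $F_2 = A * B$ (or $C*$) by a maximally-cyclic edge must have its new edge group conjugate into $A$ or $B$ (resp. $C$), and combining this with the valence-two structure from Lemma~\ref{unfold2} leaves only finitely many combinatorial types.

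Then I would assemble the count: a refinement $S'$ of $S$ in $FZ_N^{max}$ is determined by, for each of the (boundedly many, by Theorem~\ref{bf}) orbits of edges it adds inside the $G_v$'s, a choice among finitely many local refinement types at each vertex of $S$; since $S$ has $3N-2$ (or fewer) orbits of edges and only one has non-cyclic stabilizer, and since Theorem~\ref{bf} bounds the total number of edge orbits in $S'$, there are only finitely many possible $S'$. Any splitting at distance one from $S$ in $FZ_N^{max}$ is either such a refinement or a one-edge collapse of $S$, and the latter are finite in number by Theorem~\ref{t:ss}; hence $S$ has finite valence in $FZ_N^{max}$, and a fortiori in $VS_N$ since $VS_N$ is a subgraph.

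The main obstacle I expect is making precise the claim that a rank-$2$ free vertex group admits only finitely many maximally-cyclic refinements compatible with a prescribed (trivial) incident edge structure — one has to rule out the phenomenon exhibited in Section~\ref{s:vs}, where a single free splitting of $F_3$ unfolds to infinitely many inequivalent \emph{cyclic} splittings. The point that saves us is that those infinitely many splittings are \emph{not maximally-cyclic} (their edge groups are not root-closed), so Theorem~\ref{bf} and the $F_2$ classification genuinely cut the list down to a finite one; carefully separating "cyclic" from "maximally-cyclic" at this step is where the argument must be done with care.
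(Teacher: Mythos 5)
Your overall strategy (analyze the structure of the one-edge collapse $S$ and count its refinements locally at the vertices) is the same as the paper's, but your description of what $S$ actually looks like is incorrect, and this undermines the rest of the argument. In a maximal free splitting $T$ of $F_N$, \emph{all} vertex stabilizers are trivial and the quotient graph is trivalent. Collapsing one orbit of edges cannot produce a vertex with stabilizer $F_2$. There are exactly two cases: if the collapsed edge is a loop-edge in the quotient graph (an HNN-type edge), the new vertex has stabilizer $\mathbb{Z}$ and valence~$1$; if it is not a loop, all vertex stabilizers remain trivial and the new vertex has valence~$4$. Your claim that ``the non-cyclic vertex groups of $S$ have rank exactly $2$'' and the subsequent plan to cite the $F_2$ classification of \cite{CV91} and Theorem~\ref{bf} applied to a rank-$2$ vertex group therefore address a situation that does not occur.

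Because of this, the argument does not close. The actual finiteness comes from the correct case analysis: at the $\mathbb{Z}$-vertex, the root-closedness condition in $FZ_N^{max}$ forbids splitting $\mathbb{Z}$ over a proper power, so the only refinement is $T$ itself; at the valence-$4$ trivial vertex, the only refinements are the three pairings of the four incident half-edges. You do correctly flag the key subtlety at the end of your write-up --- namely that the distinction between ``cyclic'' and ``maximally-cyclic'' is what rules out the infinitely-many-unfoldings phenomenon --- and this observation is precisely the one the paper uses in the loop-edge case, just applied to a $\mathbb{Z}$ vertex rather than an $F_2$ vertex. If you correct the structure theorem for $S$, your local-counting framework would recover the paper's proof.
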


\begin{proof}
It is a standard fact that all vertices in $T$ have valence $3$, and $T$ contains $3N-3$ orbits of edges. Equivariantly collapsing an edge that maps to a loop-edge in the quotient graph of groups creates a tree $T'$ whose quotient graph of groups has a vertex of valence $1$ with $\mathbb{Z}$ as its stabilizer. Any proper refinement of $T'$ must be obtained by inserting an edge attached to this vertex, and as we cannot split cyclic vertices along proper powers in $FZ_N^{max}$, one deduces that the tree $T$ is the only proper refinement of $T'$. Equivariantly collapsing an edge that does not map to a loop-edge creates a free splitting with trivial vertex groups where one vertex has valence $4$. There are exactly three ways of attaching back an edge to the quotient graph which are given by the possible pairings of edges at the vertex of valence $4$. Hence $T'$ has finite valence.  
\end{proof}

\begin{prop}\label{max-vsn}
Let $T$ be a $VS_N$-maximal splitting that has some nontrivial edge stabilizer. Then there exists an edge $e$ in $T$ such that equivariantly collapsing $e$ to a point yields a splitting of infinite valence in $VS_N$.\end{prop}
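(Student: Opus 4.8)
The plan is to use Lemma~\ref{unfold2}: since $T$ is $VS_N$-maximal and has a nontrivial edge stabilizer, there is an unfoldable edge which in the quotient graph of groups $T/F_N$ sits at a vertex $v$ of valence two whose other adjacent edge $e$ has trivial stabilizer. Let $G_v$ be the (nontrivial, cyclic) stabilizer of $v$ and let $w$ be a generator. I would take $e$ to be the candidate edge to collapse, and set $T'$ to be the splitting obtained from $T$ by equivariantly collapsing the orbit of $e$. Collapsing $e$ merges $v$ with the vertex $u$ at the other end of $e$; the resulting vertex has stabilizer either $G_u \ast G_v$ (if $e$ is separating) or $G_u \ast \langle t \rangle$ with $w$ conjugated across the stable letter (if $e$ is nonseparating). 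In the separating case one checks that $G_v = \langle w \rangle$ is now a free factor of the new vertex group; in the nonseparating case the new vertex group contains $w$ inside a corank-one free factor together with a stable letter. Either way, $T'$ admits a one-parameter family of proper refinements analogous to those in the proof of Lemma~\ref{one-edge-valence}: one re-inserts an edge carrying stabilizer $\langle w \rangle$ but folds it back in infinitely many inequivalent ways by twisting the complementary free factor generators $b_j \mapsto b_j w^i$ (separating case) or by replacing the stable-letter attachment $w^{t^{-1}}$ with $g^{t^{-1}}$ for the infinitely many non-proper-powers $g$ (nonseparating case). All of these refinements of $T'$ are very small, hence lie in $VS_N$, and they are pairwise inequivalent, so $T'$ has infinite valence in $VS_N$.

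The main technical point I would need to verify carefully is that these refinements of $T'$ are genuinely \emph{distinct} splittings and genuinely \emph{very small} (trivial tripod stabilizers, edge stabilizers closed under roots). Distinctness follows because the twisting automorphisms $b_1 \mapsto b_1 w^i$ are not realized by an $F_N$-equivariant homeomorphism for distinct $i$ — this is the same argument already invoked in Lemma~\ref{one-edge-valence}, and I would just cite it. Very-smallness is automatic: each new edge has cyclic edge group which is a root-closed subgroup (it is $\langle w\rangle$ intersected with the ambient vertex group, and $w$ may be taken root-closed since $T$ is maximally-cyclic), and inserting a single edge cannot create a tripod with nontrivial stabilizer beyond what the ambient edge group already witnesses, which is fine since $T \in VS_N$ has trivial tripod stabilizers to begin with.

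A subtlety worth flagging: one must make sure that collapsing the orbit of $e$ does \emph{not} accidentally collapse the nontrivial-stabilizer edge adjacent to $v$ as well — this is guaranteed precisely because $v$ has valence two with the two adjacent edges in distinct $F_N$-orbits, an observation built into the "unfolding" setup in Section~\ref{s:vs}. If $e$ and its neighbour lay in the same orbit, collapsing $e$ could degenerate $T'$; Lemma~\ref{unfold2} rules this out for $\mathcal{G}$-maximal $T$. With that in hand, the edge $e$ supplied by Lemma~\ref{unfold2} is exactly the edge required by the statement, and the proof is complete modulo the routine verifications above. The hardest part is organizing the separating versus nonseparating casework for the collapsed vertex group cleanly so that in both cases one can exhibit the infinite family; once the right structural decomposition of the new vertex group is written down, the infinitude is a direct transcription of the argument in Lemma~\ref{one-edge-valence}.
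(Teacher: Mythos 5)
Your plan collapses the \emph{trivial}-stabilizer edge $e$ adjacent to the valence-two vertex $v$, whereas the paper's proof collapses the other edge at $v$, the unfoldable edge with stabilizer $\langle w\rangle$. This is not a cosmetic relabelling: it leads to a genuine gap. After you collapse $e$, the merged vertex $v''$ has stabilizer $\langle w\rangle \ast G_u$, but $G_u$ may carry its own peripheral structure — the vertex $u$ can have additional adjacent edges with nontrivial stabilizers $H_1,\dots,H_n\subseteq G_u$. Any tree you propose as a refinement of $T'$ at $v''$ must keep every one of $\langle w\rangle, H_1,\dots,H_n$ elliptic; otherwise it does not collapse back to $T'$ and is not a vertex of $\mathcal{G}$ adjacent to $T'$ at all. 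The twisting $b_1\mapsto b_1 w^i$ wrecks exactly this: if, say, $G_u=\langle x,y\rangle$ and some edge at $u$ has stabilizer $\langle [x,y]\rangle$, then for $i\neq 0$ the element $[x,y]$ is not conjugate into $\langle xw^i,y\rangle$ inside $\langle w\rangle\ast\langle x,y\rangle$ (its cyclically reduced form $xw^{-i}\cdot y\cdot w^i x^{-1}\cdot y^{-1}$, written in the basis $\{w, xw^i, y\}$, still involves $w^{\pm i}$), so the twisted decomposition does not even yield a refinement of $T'$. Moreover, because $T$ is $VS_N$-maximal, $G_u$ cannot be split any further compatibly with its peripheral structure; this is precisely the situation in which the free part of $G_u$ is ``used up'' and no twist is available. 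You flagged distinctness and very-smallness as the points to verify, but the missing verification is the more basic one: that the candidate trees are refinements of $T'$ at all.

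Two smaller issues. First, the separating/nonseparating casework for $e$ is vacuous: since $v$ has valence two in $T/F_N$, $e$ is never a loop at $v$, so only the ``$G_u\ast G_v$'' branch occurs. Second, the phrase ``re-insert an edge carrying stabilizer $\langle w\rangle$'' conflicts with what the twist actually produces (a trivial-stabilizer edge); the description of the construction is not internally consistent.

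For comparison, the paper collapses the nontrivial edge $e$ and exploits the trivial-stabilizer edge $e'$ that survives at the merged vertex $v''$. It then produces refinements by \emph{folding} $e'$ against translates ($ge'$ for $g\in G_{v''}$ not a proper power when $G_{v''}$ has rank $\ge 2$, or $e_1$ against $t^k e_2$ when $G_{v''}$ is cyclic). Folding is a local move that creates a new $\mathbb{Z}$-edge without touching the peripheral structure of the rest of the tree, which is exactly what sidesteps the obstruction above. The case split on the rank of the vertex stabilizer $G_{v'}$ at the far end of $e$ is where the actual work goes; your proposal does not address it.
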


\begin{proof}
As $T$ is $VS_N$-maximal, Lemma~\ref{unfold2} shows the existence of an edge $e$ in $T$ adjacent to a vertex $v$ of valence $2$ (in the quotient graph $T/F_N$) such that its second adjacent edge $e'$ has a trivial stabilizer. Let $v'$ be the other vertex of $e$. 

If the stabilizer of $v'$ has rank at least $2$, let $T'$ be the splitting obtained by equivariantly collapsing $e$ to a point. Then $T'$ contains a vertex $v''$ (the image of $v'$ under the collapse map) whose stabilizer $G_{v''}$ has rank at least $2$, and which is adjacent to an edge $e'$ with trivial stabilizer. Hence $T'$ has infinite valence in $VS_N$, since one gets infinitely many distinct refinements of $T'$ by equivariantly folding an initial segment of $e'$ with an initial segment of $ge'$, for some $g\in G_{v''}$ which is not a proper power (and which can be chosen so as not to create any tripod stabilizer).

If the stabilizer of $v'$ is cyclic, minimality of the $F_N$-action, together with the fact that edge stabilizers are not proper powers, prevents $v'$ from having valence $1$ in the quotient graph $T/F_N$.  As $G_{v'}$ is cyclic, edge stabilizers are not proper powers, and $e$ is not contained in a tripod stabilizer, one of the adjacent edges of $v'$ has a trivial stabilizer. Let $T'$ be the tree obtained from $T$ by equivariantly collapsing $e$ to a point. Then $T'$ contains a vertex $v''$ with nontrivial cyclic stabilizer (a generator of which we denote by $t$), adjacent to two edges (denoted by $e_1$ and $e_2$) with trivial stabilizers that have initial segments which are in distinct $F_N$-orbits. Hence $T'$ has infinite valence in $VS_N$, since one gets infinitely many distinct refinements $T'_k$ of $T'$ by equivariantly folding an initial segment of $e_1$ with an initial segment of $t^ke_2$, for $k$ varying in $\mathbb{Z}$. To check that the splittings $T'_k$ are pairwise distinct, let $g\in F_N$ be an element whose axis in $T'$ crosses the turn $(e_1,t^ke_2)$ (this exists by minimality of the $F_N$-action on $T'$), and let $h\in F_N$ be an element whose axis crosses the turn $(te_1,t^2e_1)$. Then for all $l\in\mathbb{N}$, the axes of $g$ and $h$ meet in a single point in $T'_l$, except precisely when $k=l$, in which case they are disjoint.
\end{proof}

\begin{cor}\label{vs-fs}
Any isometry of $VS_N$ preserves the subgraph $FS_N$ (setwise).
\end{cor}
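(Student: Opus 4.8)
The plan is to combine the preceding results in the same way that Corollary~\ref{fz-fs} was deduced in the $FZ_N$ case, but now using the finite-valence dichotomy established in Propositions~\ref{maximal-sphere} and \ref{max-vsn} in place of Proposition~\ref{max-free}. First I would invoke Proposition~\ref{max-preserved}: any isometry $\Phi$ of $VS_N$ preserves the set of $VS_N$-maximal splittings setwise, and moreover (Remark~\ref{edge-inv}) preserves the number of edge orbits, hence the relation ``$T'$ is obtained from $T$ by collapsing exactly one orbit of edges''. Next I would characterize maximal free splittings among all $VS_N$-maximal splittings purely in terms of the graph $VS_N$: a $VS_N$-maximal splitting $T$ is a maximal free splitting if and only if \emph{every} splitting obtained by collapsing one orbit of edges of $T$ has finite valence in $VS_N$. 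The forward implication is exactly Proposition~\ref{maximal-sphere}, and the contrapositive of the backward implication is Proposition~\ref{max-vsn}: if $T$ is $VS_N$-maximal with a nontrivial edge stabilizer, then \emph{some} one-edge collapse of $T$ has infinite valence.

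Since both the set of $VS_N$-maximal splittings and the ``collapse one orbit of edges'' relation are preserved by $\Phi$, and finiteness of valence in $VS_N$ is obviously preserved by an isometry, the characterization above shows that $\Phi$ preserves the set of maximal free splittings of $F_N$. It then remains to upgrade this to invariance of all of $FS_N$. For this I would prove the analogue of Lemma~\ref{adjacent-max} for $VS_N$: a splitting of $F_N$ lies in $FS_N$ if and only if it is at distance at most one in $VS_N$ from a maximal free splitting. One direction is immediate, since a splitting with a nontrivial edge stabilizer cannot be refined by a free splitting and is not itself adjacent to a maximal free splitting; for the other direction one uses that free splittings have a bounded number of edges (Theorem~\ref{bf}, or the standard fact that maximal free splittings have $3N-3$ edges) so that every free splitting enlarges to a maximal one, and that a maximal free splitting is still maximal among cyclic splittings (Remark~\ref{rk-maximal-one-edge}), hence remains a vertex of $VS_N$ adjacent to the given free splitting. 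Combining this with the previous paragraph gives that $\Phi$ preserves $FS_N$ setwise.

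The main obstacle, and the real content of the argument, is the finite-versus-infinite valence dichotomy that has already been carried out in Propositions~\ref{maximal-sphere} and \ref{max-vsn}; at the level of this corollary everything is a formal consequence. The one point requiring a little care is ensuring that the ``collapse one orbit of edges'' relation, rather than merely adjacency in $VS_N$, is what gets used: adjacency in $VS_N$ between a $VS_N$-maximal splitting and its neighbours is automatically realized by one-edge collapses (a maximal splitting has no proper refinement in $VS_N$), and Remark~\ref{edge-inv} guarantees $\Phi$ respects the number of collapsed edge orbits, so the characterization transports cleanly under $\Phi$.

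\begin{proof}
By Proposition~\ref{max-preserved} together with Remark~\ref{edge-inv}, any isometry $\Phi$ of $VS_N$ preserves the set of $VS_N$-maximal splittings, and sends a one-edge collapse of a $VS_N$-maximal splitting $T$ to a one-edge collapse of $\Phi(T)$. By Proposition~\ref{maximal-sphere}, if $T$ is a maximal free splitting then every splitting obtained by equivariantly collapsing one orbit of edges in $T$ has finite valence in $VS_N$; by Proposition~\ref{max-vsn}, if $T$ is a $VS_N$-maximal splitting with a nontrivial edge stabilizer then some such collapse has infinite valence in $VS_N$. Since $\Phi$ preserves finiteness of valence, it follows that $\Phi$ preserves the set of maximal free splittings of $F_N$.

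It remains to show that a splitting $S$ of $F_N$ lies in $FS_N$ if and only if it is at distance at most one in $VS_N$ from a maximal free splitting. A splitting with a nontrivial edge stabilizer cannot be refined by a free splitting, nor is it itself a maximal free splitting, so only free splittings can satisfy the latter condition. Conversely, every free splitting of $F_N$ refines to a maximal one, since the number of orbits of edges in a free splitting is bounded (maximal free splittings have $3N-3$ orbits of edges; an abstract bound also follows from Theorem~\ref{bf}); by Remark~\ref{rk-maximal-one-edge} this maximal free splitting is a vertex of $VS_N$, and it is adjacent to $S$. Hence $\Phi$, which preserves the set of maximal free splittings, also preserves $FS_N$ setwise.
\end{proof}
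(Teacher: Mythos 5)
Your proposal is correct and follows essentially the same route as the paper's proof: Proposition~\ref{max-preserved} plus Remark~\ref{edge-inv} to transport the ``one-edge collapse of a maximal splitting'' relation, the finite/infinite valence dichotomy from Propositions~\ref{maximal-sphere} and \ref{max-vsn} to pick out maximal free splittings, and the distance-one characterization of $FS_N$ (which the paper just cites as Lemma~\ref{adjacent-max}, whereas you re-derive it in the $VS_N$ setting) to conclude. The only difference is a modest increase in detail; the argument is the same.
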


\begin{proof}
Let $f$ be an isometry of $VS_N$. Proposition \ref{max-preserved} shows that $f$ preserves the set of $VS_N$-maximal splittings. Every one-edge collapse of a maximal free splitting has finite valence (Proposition \ref{maximal-sphere}), whereas every non-free $VS_N$-maximal splitting has a one-edge collapse with infinite valence (Proposition \ref{max-vsn}), therefore $f$ preserves the set of maximal free splittings (Remark \ref{edge-inv} tells us that the property of being a one-edge collapse of an adjacent vertex is preserved under an isometry). As free splittings are characterized by being at distance at most one from a maximal free splitting (Lemma \ref{adjacent-max}), the map $f$ preserves $FS_N$.
\end{proof}

Proposition \ref{max-vsn} fails to be true for the graph $FZ_N^{max}$, as illustrated by the $FZ_4^{max}$-splitting displayed in Figure \ref{fig-max-fz}. Again we will have to refine the argument to distinguish maximal free splittings from other $FZ_N^{max}$-maximal splittings.

\begin{figure}
\centering
\includegraphics{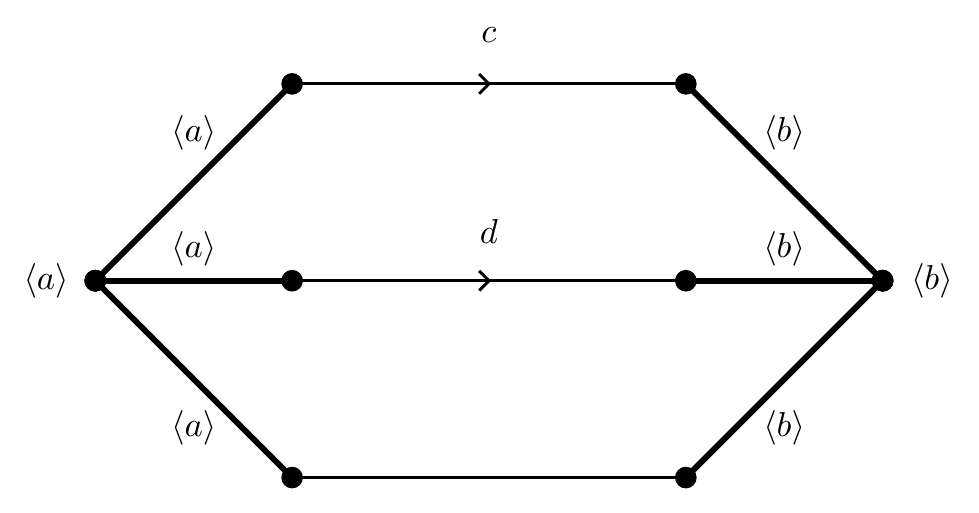}
\caption{An $FZ_4^{max}$-maximal splitting that fails to satisfy the conclusion of Proposition~\ref{max-vsn}.}
\label{fig-max-fz}

\end{figure}

\subsection{The case of $FZ_N^{max}$.}

We will need to make one more observation about maximal free splittings of $F_N$.

\begin{prop} \label{maximal-sphere-2}
Let $T$ be a maximal free splitting of $F_N$, and let $e_1$ and $e_2$ be two edges in $T$ that do not belong to the same $F_N$-orbit of edges. Assume that equivariantly collapsing either $e_1$ or $e_2$ to a point yields a splitting whose only adjacent $FZ_N^{max}$-maximal splitting is $T$. Then equivariantly collapsing both $e_1$ and $e_2$ to points yields a splitting of finite valence in $FZ_N^{max}$. 
\end{prop}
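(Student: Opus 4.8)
The plan is to distinguish two cases according to the combinatorics of the two collapsed orbits inside the quotient graph $T/F_N$. Since $T$ is a maximal free splitting, $T/F_N$ is a finite trivalent graph with $3N-3$ edges, all vertex and edge groups being trivial. Equivariantly collapsing an orbit of edges that projects to a loop of $T/F_N$ produces a vertex with infinite cyclic stabilizer (it is an HNN extension of the trivial group over the trivial group), whereas collapsing an orbit projecting to a non-loop edge keeps all vertex groups trivial; tracking valences, the new cyclic vertex has valence one, unless the second collapsed orbit projects to the remaining edge at that vertex, in which case it has valence two. So the splitting $S$ obtained by collapsing $e_1$ and $e_2$ has $3N-5$ orbits of edges, all of its vertex groups are trivial or infinite cyclic, and $S$ carries an infinite cyclic vertex of valence two precisely when one of $e_1,e_2$ projects to a loop of $T/F_N$ and the other to an edge incident to that loop.

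First I would treat the case where $S$ has no infinite cyclic vertex of valence two, and show that $S$ then has finite valence in $FZ_N^{max}$. On the one hand, $S$ properly refines only finitely many splittings, namely those obtained by collapsing a nonempty proper sub-collection of its $3N-5$ orbits of edges (Theorem~\ref{t:ss}, as in the proof of Lemma~\ref{maximal-valence}). On the other hand, a proper refinement of $S$ is obtained by equivariantly replacing each vertex $v$ of $S$ by a graph of groups with fundamental group $G_v$ and the edges of $S$ at $v$ attached, and in the present case each vertex of $S$ admits only finitely many such reduced replacements: a trivial vertex of valence $d$ admits finitely many, since a reduced replacement is a tree with $d$ marked attaching points all of whose other vertices have valence at least three, which forces a bound on its size; a valence-one infinite cyclic vertex admits a unique replacement, exactly as in the proof of Proposition~\ref{maximal-sphere}, because one cannot split a cyclic vertex along a proper power in $FZ_N^{max}$; and by assumption there are no other refinable vertices. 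Hence $S$ is refined by only finitely many splittings, and $S$ has finite valence.

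Second I would show that the remaining case does not occur under the hypothesis. Say $e_1$ projects to a loop of $T/F_N$ at a vertex $v_1$ and $e_2$ projects to the remaining edge at $v_1$, joining $v_1$ to a vertex $w$ (the other arrangement is symmetric). Collapsing $e_2$ in $T$ merges $v_1$ and $w$ into a single vertex of valence four with trivial stabilizer, whose four incident edge-orbits are the two ends of the loop $e_1$ together with the two edges at $w$ other than $e_2$. Reattaching an edge along the partition that keeps the two ends of $e_1$ on one side recovers $T$; reattaching along either of the two other partitions yields a maximal free splitting $T'$ in which $e_1$ no longer projects to a loop, so that $T'\neq T$, the number of loop-edges of the quotient graph being an isomorphism invariant and having changed. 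Since maximal free splittings are $FZ_N^{max}$-maximal (Remark~\ref{rk-maximal-one-edge}), the splitting obtained by collapsing $e_2$ is then adjacent to at least two distinct $FZ_N^{max}$-maximal splittings, contradicting the hypothesis; so this case is vacuous and the proposition follows.

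The main obstacle, in my view, is recognizing that this second case is the only obstruction: the sole way $S$ can fail to have finite valence is to carry a valence-two vertex with infinite cyclic stabilizer and two incident edges of trivial stabilizer lying in distinct orbits, along which one folds infinitely many pairwise distinct refinements in the manner of Proposition~\ref{max-vsn}; and then seeing that this configuration is exactly the one ruled out by the hypothesis, since collapsing its non-loop edge alone already refines to two distinct maximal free splittings. The remaining points --- the valence bookkeeping for $S$, the finiteness of reduced replacements at trivial and at valence-one cyclic vertices, and the fact that reattaching an edge ``across'' the loop genuinely changes the splitting --- are routine.
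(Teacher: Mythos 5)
Your proof is correct, but it takes a genuinely different route from the paper's. The paper exploits the hypothesis immediately: since (by the proof of Proposition~\ref{maximal-sphere}) collapsing a non-loop edge of a maximal free splitting yields a splitting with \emph{three} adjacent maximal free splittings, the hypothesis forces both $e_1$ and $e_2$ to project to loop-edges in $T/F_N$; two loop-edges cannot share a vertex in a trivalent graph, so collapsing both produces two independent valence-one $\mathbb{Z}$-vertices, each admitting a unique proper $FZ_N^{max}$-refinement, and finite valence follows directly. You instead defer the use of the hypothesis, classify all possible shapes of the twice-collapsed splitting $S$, and show that $S$ has finite valence in every configuration except when one $e_i$ is a loop and the other is the adjacent non-loop edge (producing a valence-two $\mathbb{Z}$-vertex, the only source of infinite valence); you then invoke the hypothesis only to rule out that exceptional case, via the same three-adjacent-maxima observation applied to the non-loop edge. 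The two arguments share their key ingredients (the refinement count for trivial valence-$d$ vertices, the uniqueness of the refinement at a valence-one cyclic vertex, and the three-way reattachment at a valence-four vertex), but the paper's reduction-first strategy is shorter, whereas your exhaustive approach establishes finite valence in several configurations that the hypothesis already excludes -- more work, but also a cleaner identification of exactly where infinite valence could come from.
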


\begin{proof}
As noticed in the proof of Proposition \ref{maximal-sphere}, the edges $e_1$ and $e_2$ project to loop-edges in the quotient graph $T/F_N$. The claim follows by noticing that two such edges cannot be adjacent in $T/F_N$, as all vertices of a maximal free splitting have valence $3$. Hence the only way of attaching back edges to the splitting obtained by equivariantly collapsing both $e_1$ and $e_2$ to points is to attach $e_1$ and/or $e_2$.
\end{proof}

In contrast to free splittings, an $FZ_N^{max}$-maximal splitting which contains a nontrivial edge stabilizer fails to satisfy either Proposition \ref{maximal-sphere} or Proposition \ref{maximal-sphere-2}. 

\begin{prop}\label{maximal-torus} 
Let $T\in FZ_N^{max}$ be an $FZ_N^{max}$-maximal splitting that has some nontrivial edge stabilizer. At least one of the following two points holds:

\begin{enumerate}

\item There exists an edge $e$ in $T$ such that equivariantly collapsing $e$ to a point yields a splitting of infinite valence in $FZ_N^{max}$.

\item There exist two edges $e_1$ and $e_2$ in $T$ such that:

\begin{enumerate}
\item Equivariantly collapsing either $e_1$ or $e_2$ to a point yields a splitting whose only adjacent $FZ_N^{max}$-maximal splitting is $T$.
\item Equivariantly collapsing both $e_1$ and $e_2$ yields a splitting of infinite valence in $FZ_N^{max}$.\end{enumerate}
\end{enumerate}
\end{prop}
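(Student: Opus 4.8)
The plan is to run the strategy of Proposition~\ref{max-vsn} while keeping careful track of the extra flexibility allowed by the fact that $FZ_N^{max}$ permits tripod stabilizers. I would begin from Lemma~\ref{unfold2}: since $T$ is $FZ_N^{max}$-maximal with a nontrivial edge stabilizer, there is an unfoldable edge $e$ adjacent to a vertex $v$ of valence two in $T/F_N$ with $G_v=G_e$ nontrivial, necessarily maximally-cyclic, say $G_e=\langle t\rangle$, and with the second edge $\bar e$ at $v$ having trivial stabilizer; write $v'$ for the other endpoint of $e$. At the outset I would isolate two elementary moves. \emph{Move (i):} if a splitting in $FZ_N^{max}$ has a vertex $x$ whose vertex group has rank at least $2$ and which is adjacent to an edge with trivial stabilizer, one may ``pull off'' a new edge with stabilizer $\langle g\rangle$ for $g\in G_x$ not a proper power (sliding the trivial edge to the new leaf, so that the configuration stays reduced and maximally-cyclic); letting $[g]$ range over the infinitely many conjugacy classes of non-proper-power elements of $G_x$ produces infinitely many distinct refinements, so the splitting has infinite valence. \emph{Move (ii):} the fold of Proposition~\ref{max-vsn}, where at a vertex with infinite cyclic stabilizer $\langle t\rangle$ adjacent to two half-edges with trivial stabilizer lying in distinct $F_N$-orbits, folding one onto the $t^k$-translate of the other ($k\in\mathbb Z$) gives infinitely many distinct refinements, distinctness being checked via axes exactly as there.

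Next I would split according to $G_{v'}$. If $G_{v'}$ is not cyclic, then collapsing the orbit of $e$ yields a splitting with a vertex of rank at least $2$ adjacent to (the image of) $\bar e$, so Move~(i) shows it has infinite valence and conclusion~(1) holds. Otherwise $G_{v'}$ is cyclic, and since it contains the root-closed subgroup $\langle t\rangle$, it equals $\langle t\rangle$. I would then introduce the fixed subtree $Y:=\mathrm{Fix}_T(t)$ and record its structure: $\langle t\rangle$ acts trivially on $Y$, $\mathrm{Stab}(Y)=\langle t\rangle$, every edge of $Y$ has stabilizer exactly $\langle t\rangle$ and every vertex of $Y$ has stabilizer containing $\langle t\rangle$, and $Y/\langle t\rangle$ embeds as a \emph{finite} subtree of $T/F_N$ (finiteness from Theorem~\ref{bf}). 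The edge $e$ lies in $Y$ and $v$ is a leaf of $Y$ carrying the dangling trivial edge $\bar e$; moreover minimality of the action forces every leaf of $Y$ whose vertex group is exactly $\langle t\rangle$ to have valence at least $2$ in $T/F_N$, hence to carry a dangling edge with trivial stabilizer.

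Now I would analyse the finite tree $Y$. First, if some vertex of $Y$ has vertex group of rank at least $2$: using $FZ_N^{max}$-maximality one rules out a degenerate situation, and otherwise one collapses an incident edge of $Y$ to bring a trivial-stabilizer edge adjacent to such a vertex and applies Move~(i), so conclusion~(1) holds. Likewise, if collapsing $e$ (or a dangling trivial edge at a leaf of $Y$) produces a vertex of rank at least $2$ carrying a trivial edge, Move~(i) again gives conclusion~(1); by the preceding remarks this is the typical outcome. This reduces to the residual situation in which $Y$ is a finite tree all of whose vertices have vertex group exactly $\langle t\rangle$ and each of whose leaves carries a single dangling trivial edge. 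In that situation, if some vertex $x$ of $Y$ is adjacent in $Y$ to two leaves of $Y$ along edges $e_1,e_2$, I would verify conclusion~(2): collapsing $e_1$ alone (resp.\ $e_2$ alone) gives a splitting whose merged vertex has group $\langle t\rangle$, is adjacent to a trivial edge and to at least one $\langle t\rangle$-edge, hence admits no fold (incompatible stabilizers) and no vertex-splitting (indecomposable cyclic group), so by Theorem~\ref{t:ss} its only adjacent $FZ_N^{max}$-maximal splitting is $T$ --- this is~2(a) --- and collapsing both $e_1$ and $e_2$ merges the two leaves and $x$ into a single $\langle t\rangle$-vertex adjacent to two dangling trivial half-edges in distinct orbits, giving infinite valence by Move~(ii) --- this is~2(b).

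The remaining case, where $Y$ is ``long and thin'' (no vertex adjacent to two of its leaves), is the \textbf{main obstacle}. Here I expect to argue that $FZ_N^{max}$-maximality, together with a case analysis of what collapsing the two dangling trivial edges $\bar e,\bar f$ at the ends of $Y$ does to their outer endpoints $w,w'$, forces one of: such a collapse creates a rank-$\ge 2$ vertex carrying a trivial edge (Move~(i), giving~(1)); or $w$ and $w'$ carry trivial vertex groups together with further trivial edges, in which case $\bar e$ and $\bar f$ play the role of $e_1,e_2$ and conclusion~(2) follows by the same fold-versus-no-fold bookkeeping. The delicate point is checking condition~2(a) honestly in these residual subcases --- that is, that the relevant one-edge collapse cannot be completed to any $FZ_N^{max}$-maximal splitting other than $T$ (for instance ruling out spurious refinements obtained by folding a trivial edge along its own $t$-translates) --- which uses Theorem~\ref{t:ss} and the rigidity of the subtree $Y=\mathrm{Fix}_T(t)$; this bookkeeping is where essentially all of the work lies.
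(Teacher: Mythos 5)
Your proposal follows the same broad outline as the paper through the first few steps (invoking Lemma~\ref{unfold2}, splitting into the rank-$\ge 2$ versus cyclic cases at $v'$, and reusing the two folding ``moves'' from Proposition~\ref{max-vsn}), but then diverges: where the paper isolates the needed combinatorial configuration in a separate Lemma~\ref{semi-interior}, you instead try to analyse the single cylinder $Y=\mathrm{Fix}_T(t)$ head-on. That difference is exactly where the gap lies, and it is a genuine one --- you explicitly flag the ``long and thin'' residual case as ``the main obstacle'' where ``essentially all of the work lies'' and then leave it at the level of an expectation rather than an argument.

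Two concrete problems with closing that gap by your route. First, your residual case is not in fact restricted to paths: ``no vertex adjacent to two leaves of $Y$'' also allows $Y$ to be a tree with branch points whose arms all have length $\ge 2$. At such a branch point $x$ of $Y$ with $G_x=\langle t\rangle$ one may have only $\langle t\rangle$-edges and no trivial edge at all; collapsing the extremal $Y$-edge $e$ into $x$ then produces a merged vertex with only one dangling trivial edge, so neither of your Moves~(i) or~(ii) applies, and it is not at all clear how to produce infinite valence or the pair $(e_1,e_2)$ from the data at hand. Second, even when $Y$ is a genuine path, making the argument work requires extra structural facts you haven't established (for instance, that interior $Y$-vertices must carry a trivial half-edge once subdivision vertices are discarded, which is what eventually makes Move~(ii) fire); these are not hard, but they are not in the write-up, and they still would not cover the branched case above. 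The paper sidesteps all of this via Lemma~\ref{semi-interior}, which guarantees (by an induction on the number of orbits of nontrivially-stabilized edges, using unfolding and tracking how peripheral structure changes under the fold map) that \emph{some} cylinder contains either an unfoldable edge lying in no stabilized tripod, or two unfoldable edges in distinct orbits meeting at a common vertex. This is precisely the dichotomy that feeds cleanly into the two conclusions of the proposition, and it is the ingredient your analysis of a single fixed $Y$ cannot reproduce without redoing that induction. As it stands, your proposal is an honest partial reduction, but it does not prove the statement.
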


To prove this, we first take a detour to look at \emph{cylinders} in such splittings. A \emph{cylinder} in $T$ is a maximal subtree $Y$ in $T$ with the property that all edges of $Y$ have the same stabilizer. Note that an unfoldable edge (as defined in Section~\ref{s:vs}) is extremal in its associated cylinder (it is adjacent to a leaf vertex of $Y$).

\begin{lemma}\label{semi-interior}
Let $T\in FZ_N^{max}$ be a splitting that has some nontrivial edge stabilizer. There exists a cylinder $Y$ in $T$ with nontrivial  edge stabilizers that either contains an unfoldable edge which does not belong to any stabilized tripod, or contains two unfoldable edges in distinct $F_N$-orbits attached to the same vertex. \end{lemma}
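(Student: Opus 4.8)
\textbf{Proof proposal for Lemma~\ref{semi-interior}.}

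The plan is to start with an arbitrary unfoldable edge $e$, which exists by Lemma~\ref{unfold}, and to analyze the cylinder $Y$ it lives in, together with the cylinders of any ``obstructing'' tripods. First I would recall the setup of Lemma~\ref{unfold}: $e$ is adjacent to a vertex $v$ with $G_v = G_e \ast A$, where $G_e = \langle w \rangle$ is a maximally-cyclic subgroup, and every other edge orbit at $v$ has its group (a representative of it) inside $A$. In particular, if $e'$ is another edge at $v$ in the same cylinder as $e$, then $G_{e'}$ is conjugate in $G_v$ into $G_e \cap A = 1$, a contradiction; so $e$ is the unique edge of $Y$ at $v$, i.e.\ $e$ is extremal in $Y$, as already noted in the paragraph before the lemma. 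The only way the conclusion can fail is if \emph{every} unfoldable edge lies in some stabilized tripod and no vertex carries two unfoldable edges from distinct orbits. So assume this for contradiction and fix an unfoldable edge $e$ contained in a stabilized tripod $\tau$. The three half-edges of $\tau$ at its central vertex all have stabilizer containing (hence, by maximality, equal to) the tripod stabilizer, so they all lie in the same cylinder $Y$ as $e$; thus the central vertex of $\tau$ is an interior vertex of $Y$ of valence $\ge 3$ in $Y$.

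Next I would exploit the mismatch between $e$ being \emph{extremal} in $Y$ (it sits at a leaf $v$ of $Y$) and $e$ being an \emph{edge of} the tripod $\tau$ (whose every half-edge at the central vertex is interior to $Y$). Orienting $e$ away from $v$, follow the cylinder $Y$: since $Y$ is a subtree and $e$ leads from the leaf $v$ into $Y$, there is a well-defined path in $Y$ from $v$ through $e$; walking along it, the first vertex of valence $\ge 3$ in $Y$ that we meet, call it $u$, carries at least two edges of $Y$ pointing ``away'' from $e$. Now I want to locate an unfoldable edge at $u$ in a different orbit. The key input is Lemma~\ref{unfold} applied \emph{inside} $Y$, or rather the combinatorial content of its proof (Bestvina--Feighn, Lemma 4.1 of \cite{BF94}), which produces unfoldable edges at leaves of the ``pattern'' obtained by looking at how the free vertex groups hang off the cylinder; concretely, each leaf of $Y/F_N$ which is attached (in $T/F_N$) only to free-group pieces gives an unfoldable edge. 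Since $Y$ has at least one branch vertex, $Y/F_N$ has at least two leaves (counting with the group-theoretic decomposition at interior vertices, exactly as in the Shenitzer--Swarup argument), and running the argument at two different leaves produces two unfoldable edges $e_1, e_2$ in $Y$. If these can be arranged in distinct $F_N$-orbits and attached to a common vertex, we are done; if instead no common vertex works, then by assumption each of $e_1,e_2$ lies in a stabilized tripod and we iterate, descending in the finite graph $Y/F_N$ — this terminates because each step moves strictly closer to a genuine leaf of $Y$, and at a genuine leaf of $Y$ whose adjacent $T/F_N$-edges are all free there is no room for a stabilized tripod.

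The main obstacle I anticipate is making the descent rigorous: precisely formulating ``unfoldable edge at a leaf of the cylinder'' so that Lemma~\ref{unfold}'s proof really delivers it, and then showing that if every unfoldable edge is swallowed by a tripod the process cannot continue forever. The clean way to handle this is to choose, among all cylinders with nontrivial stabilizer, one that is \emph{minimal} for the number of $F_N$-orbits of edges (such exists since each such count is a positive integer), and to argue that in a minimal cylinder $Y$ either $Y/F_N$ is a single edge — in which case its unique edge is unfoldable at one of its endpoints and, being the whole cylinder, cannot be interior to any stabilized tripod (a tripod would force a branch vertex in $Y$), giving the first alternative — or $Y/F_N$ has a branch vertex, and then the two-leaf analysis above, combined with minimality, forces two unfoldable edges in distinct orbits meeting at a vertex, giving the second alternative. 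I would also need to double-check the degenerate cases where $Y$ is not simplicially finite in the naive sense (a priori cyclic splittings need not have bounded complexity, cf.\ the $F_2$ example in Section~\ref{s:vs}), handled by passing to the quotient graph of groups $T/F_N$, which is always a finite graph.
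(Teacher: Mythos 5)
Your overall strategy is genuinely different from the paper's, and it has gaps that are not cosmetic. The paper does not try to read off the conclusion directly from the combinatorics of a single cylinder. Instead it runs an induction on the number of $F_N$-orbits of edges with nontrivial stabilizer: it takes an unfoldable edge $e$, fully unfolds it to get $T'$ with one fewer such orbit, and proves a persistence claim — any edge with nontrivial stabilizer that is unfoldable in $T'$ has unfoldable image in $T$ (checked via peripheral structure at the three types of vertices of $T'$). This claim, together with the observation that cylinders of $T'$ off the orbit of the unfolded one are isomorphic to cylinders of $T$, shows that if $T$ were a counterexample then $T'$ would be too, contradicting the one-orbit base case.

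Your proposal instead asserts that the Shenitzer--Swarup machinery yields an unfoldable edge at \emph{every} suitable leaf of the cylinder quotient $Y/F_N$, and then tries to feed two such edges into the second alternative of the lemma. There are two concrete problems. First, Lemma~\ref{unfold} produces \emph{one} unfoldable edge in $T$, at a location you do not control; it does not localize to a given cylinder $Y$, and it certainly does not produce one at each leaf of $Y/F_N$. That claim would need its own proof, and it is exactly the kind of local-to-global statement that the paper's unfolding-and-persistence argument is designed to replace. Second — and this is the sharper issue — two unfoldable edges found at \emph{different} leaves of $Y/F_N$ are generically \emph{not} incident to a common vertex, yet the second alternative of the lemma requires two unfoldable edges in distinct orbits \emph{attached to the same vertex}. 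Your sentence ``if these can be arranged in distinct $F_N$-orbits and attached to a common vertex, we are done'' is a non sequitur: nothing in the leaf analysis arranges them at a common vertex. The minimal-cylinder variant inherits both issues: you would first have to show that the minimal cylinder even contains an unfoldable edge (Lemma~\ref{unfold} does not place it there), and the branch-vertex case again relies on the unproven two-leaf claim and the common-vertex leap.

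The one part of your proposal that does go through cleanly is the single-edge-orbit case: if $Y/F_N$ has one edge, then since maximally-cyclic subgroups of $F_N$ are self-normalizing, no two edges of $Y$ at a common vertex can lie in the same orbit, hence $Y$ is literally a single edge and cannot support a stabilized tripod. This matches the paper's base case. To salvage the rest along your lines you would essentially have to reprove the paper's persistence claim (unfoldability survives full unfolding), at which point you might as well follow the paper's induction outright.
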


\begin{proof} 
Suppose otherwise. By Lemma~\ref{unfold}, the tree  $T$ contains an unfoldable edge $e$.  Let $Y$ be the cylinder in $T$ containing $e$. The edge $e$ belongs to some stabilized tripod in $Y$, and no edge in $Y$ adjacent to $e$ is unfoldable.  Let $T'$ be the tree obtained from $T$ by fully unfolding $e$, and $f:T' \to T$ the associated fold map. Let $Y'$ be the cylinder of $T'$ corresponding to the preimage of $Y\setminus e$ under $f$.  Outside of the $F_N$--orbit of $Y'$, cylinders of $T'$ are mapped isomorphically under $f$ to cylinders of $T$. We claim that if an edge $e'$ with a nontrivial stabilizer in $T'$ is unfoldable then $f(e')$ is unfoldable in $T$. This claim, along with the fact that a cylinder of $T'$ is either isomorphic to $Y'$ or has the same structure as a cylinder of $T$, implies that $T'$ also does not satisfy the lemma (recall that no edge $e'$ adjacent to $e$ in $Y$ is unfoldable). We then obtain a contradiction by using induction on the number of $F_N$-orbits of edges with nontrivial stabilizers, as a splitting with only one such orbit certainly does satisfy the lemma. Our claim follows by looking at the peripheral subgroups of the vertex groups of $T'$ (i.e., the set of $G_v$-conjugacy classes of edge groups in a vertex group $G_v$), as the peripheral subgroups of $G_v$ (counted with multiplicity) determine which edges are unfoldable at $v$. 

Suppose that $e' \subseteq T'$ is unfoldable at a vertex $v \in T'$. Let $v_1$ be the vertex of $e$ in the interior of $Y$, and $v_2$ the vertex of $e$ which is a leaf of $Y$. If $f(v)$ is not in the orbit of either $v_1$ or $v_2$ then $G_v=G_{f(v)}$ and these groups have the same peripheral subgroups, with same multiplicity, hence $f(e')$ is unfoldable in $T$. If $f(v)=v_1$, then as $v_1$ belongs to a tripod in $Y$, the group $G_e$ is still a peripheral subgroup of $G_v=G_{v_1}$ with multiplicity at least $2$ and the same assertion holds. The remaining case to consider is when the terminal vertex of $e' \subseteq T'$ satisfies $f(v)=v_2$. As we have unfolded the edge $e$ at $v_2$ there is a free factor decomposition $G_{v_2}=G_e\ast G_v$, and as $e'$ is unfoldable in $T'$ there is a free factor decomposition $G_v=G_{e'} \ast A$, with $A$ containing a representative of $[G_{e''}]$ for each edge $e''$ adjacent to $v$ with $[e'']\neq[e']$. The free factor decomposition $G_{v_2}=G_e\ast G_{e'} \ast A$ then witnesses the fact that $f(e')$ is unfoldable in $T$. 
\end{proof}

\begin{proof}[Proof of Proposition \ref{maximal-torus}]
By Lemma~\ref{semi-interior} there exists an unfoldable leaf-edge $e$ of a cylinder in $T$ such that either $e$ does not belong to any stabilized tripod, or $e$ is adjacent to another unfoldable leaf-edge in the same cylinder. As $T$ is $FZ_N^{max}$-maximal, Lemma~\ref{unfold2} tells us that the edge $e$ has an adjacent vertex $v$ of valence $2$ (in the quotient graph $T/F_N$) such that its second adjacent edge $e'$ has a trivial stabilizer. Let $v'$ be the other vertex of $e$. 

If $e$ does not belong to any stabilized tripod, then the argument in the proof of Proposition \ref{max-vsn} applies to show that the first conclusion of the proposition holds.

If $e$ belongs to a stabilized tripod, then $e$ is adjacent to another unfoldable edge $e_2$ in the corresponding cylinder $Y$. If the rank of the stabilizer of the common vertex $v'$ of $e$ and $e_2$ is at least $2$, then equivariantly collapsing one of these edges to a point yields a splitting of infinite valence, as in the second paragraph of the proof of Proposition \ref{max-vsn}. Otherwise, we claim that the pair $\{e,e_2\}$ satisfies the second conclusion of the proposition. Indeed, as $e_2$ is unfoldable and $T$ is $FZ_N^{max}$-maximal, the edge $e_2$ is also adjacent to an edge with trivial stabilizer. Equivariantly collapsing both $e$ and $e_2$ to points yields a splitting of infinite valence by the same arguments as in the last paragraph of the proof of Proposition \ref{max-vsn}. Let $T'$ be a splitting obtained by collapsing only one of these orbits of edges. Maximality of $T$ implies that all possible refinements of $T'$ are obtained by attaching an edge at $v'$, and as $v'$ has cyclic stabilizer, the only way to do this yields the splitting $T$ back. 
\end{proof}

\begin{cor} \label{stable-subcomplex}
Any isometry of $FZ_N^{max}$ preserves $FS_N$ (setwise).
\end{cor}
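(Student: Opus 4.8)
The plan is to run the same argument that was used for Corollaries \ref{fz-fs} and \ref{vs-fs}: combine the fact that an isometry preserves the set of $FZ_N^{max}$-maximal splittings (Proposition \ref{max-preserved}) with a purely combinatorial characterization, \emph{among} those maximal splittings, of the ones that are free, and then pass from maximal free splittings to arbitrary free splittings using the neighbourhood description of Lemma \ref{adjacent-max}.

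Concretely, fix an isometry $f$ of $FZ_N^{max}$. First I would record that $f$ permutes the $FZ_N^{max}$-maximal splittings (Proposition \ref{max-preserved}) and preserves the number of edge orbits of a splitting, hence the relations ``$T'$ is obtained from $T$ by collapsing a fixed number of edge orbits'' and ``$T'$ is a one-edge collapse of a neighbour'' (Remark \ref{edge-inv}). Next I would isolate the distinguishing property: an $FZ_N^{max}$-maximal splitting $T$ is a maximal free splitting if and only if \emph{(a)} every splitting obtained from $T$ by equivariantly collapsing one orbit of edges has finite valence in $FZ_N^{max}$, and \emph{(b)} whenever $e_1,e_2$ are edges of $T$ in distinct $F_N$-orbits such that for each $i$ the only $FZ_N^{max}$-maximal splitting adjacent to the collapse of $e_i$ is $T$ itself, the splitting obtained by collapsing both $e_1$ and $e_2$ also has finite valence in $FZ_N^{max}$. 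The forward implication is exactly Propositions \ref{maximal-sphere} and \ref{maximal-sphere-2}; its contrapositive — a non-free $FZ_N^{max}$-maximal splitting must violate \emph{(a)} or \emph{(b)} — is exactly Proposition \ref{maximal-torus}. Every quantity occurring in \emph{(a)} and \emph{(b)} (valence in $FZ_N^{max}$, adjacency, being $FZ_N^{max}$-maximal, being a one- or two-orbit collapse of a given splitting) is preserved by $f$, so $f$ carries maximal free splittings to maximal free splittings.

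Finally, a splitting of $F_N$ is a free splitting precisely when it lies at distance at most one from a maximal free splitting: a maximal free splitting has trivial vertex groups, hence admits no proper refinement in $FZ_N^{max}$, while any free splitting refines to a maximal one with $3N-3$ edges, so the analogue of Lemma \ref{adjacent-max} holds in $FZ_N^{max}$. Therefore $f$ preserves $FS_N$. I do not expect a genuine obstacle here, since all the content sits in Propositions \ref{maximal-sphere}, \ref{maximal-sphere-2} and \ref{maximal-torus}; the only point requiring care is to check that condition \emph{(b)} is phrased entirely in terms of data already known to be isometry-invariant — in particular the clause ``the unique adjacent maximal splitting is $T$'' must be read as a statement about the graph $FZ_N^{max}$ alone, which it is given Proposition \ref{max-preserved}.
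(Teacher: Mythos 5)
Your proposal follows the paper's proof exactly: use Proposition \ref{max-preserved} and Remark \ref{edge-inv} to reduce to a combinatorial characterization of maximal free splittings among $FZ_N^{max}$-maximal ones, supplied by Propositions \ref{maximal-sphere}, \ref{maximal-sphere-2} and \ref{maximal-torus}, and then pass to arbitrary free splittings via Lemma \ref{adjacent-max}. Your explicit biconditional phrasing of the (a)/(b) criterion and your remark on its isometry-invariance are exactly what the paper's shorter wording is appealing to, so the argument is correct and matches the paper.
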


\begin{proof}
We proceed in the same way as for $FZ_N$ and $VS_N$. Proposition~\ref{max-preserved} tells us that any isometry preserves the set of maximal splittings. Furthermore, Remark~\ref{edge-inv} tells us that the number of edges in a splitting and thus the property of adjacent vertices being collapses or refinements are also invariant under an isometry. Proposition~\ref{maximal-torus} tells us that every non-free $FZ_N^{max}$-maximal splitting $T$ either has a one-edge collapse with infinite valence in $FZ_N^{max}$, or a two-edge collapse with infinite valence such that both intermediate one-edge collapses have $T$ as their unique maximal refinement. Neither of these properties hold for a maximal free splitting (Propositions \ref{maximal-sphere} and \ref{maximal-sphere-2}). It follows that any isometry of $FZ_N^{max}$  preserves the set of maximal free splittings, and as free splittings are exactly the splittings of distance at most one from a maximal free splitting, such an isometry preserves $FS_N$ also.
\end{proof}

\section{Automorphisms of the free splitting graph} 

 Let $FS_N'$ be the graph whose vertices are one-edge free splittings of $F_N$, two vertices being joined by an edge if they admit a common refinement. If one adds higher-dimensional simplices to $FS_N'$ in a natural way then $FS_N$ becomes the 1-skeleton of the barycentric subdivision of $FS_N'$. The automorphism group of $FS_N'$ was computed by Aramayona and Souto in \cite{AS11}.

\begin{theo} (Aramayona-Souto \cite{AS11})\label{Aramayona-Souto}
For all $N\ge 3$, the natural map from $\text{Out}(F_N)$ to the isometry group of $FS'_N$ is an isomorphism. \end{theo}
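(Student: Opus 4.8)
We outline the approach we would take to prove Theorem~\ref{Aramayona-Souto}. The plan is to reduce it to Bridson and Vogtmann's theorem \cite{BV01}, which says that for $N\ge 3$ the natural map from $\text{Out}(F_N)$ to the simplicial automorphism group of the spine $K_N$ of Outer space is an isomorphism. Injectivity of $\text{Out}(F_N)\to\text{Aut}(FS'_N)$ is the easy half: an outer automorphism acting trivially on $FS'_N$ fixes every simplex of $FS'_N$, hence by Theorem~\ref{t:ss} the equivalence class of every multi-edge free splitting, in particular of every marked graph; it therefore fixes every vertex of $K_N$ and so is trivial by \cite{BV01}.

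For surjectivity, let $\Phi$ be a simplicial automorphism of $FS'_N$. By Theorem~\ref{t:ss} the simplices of $FS'_N$ are precisely the multi-edge free splittings of $F_N$, a $(k-1)$-simplex being a $k$-edge splitting and its faces being its collapses; so $\Phi$ induces a dimension-preserving automorphism $\widehat\Phi$ of the poset of free splittings of $F_N$ ordered by refinement. In particular $\widehat\Phi$ preserves the number of orbits of edges, hence the set of maximal free splittings (those with $3N-3$ orbits of edges), which are exactly the top-dimensional cells of Outer space $CV_N$. The key point is then that $\widehat\Phi$ preserves the sub-poset of free splittings with \emph{trivial vertex stabilisers}; with the induced order this sub-poset is the poset of cells of $CV_N$, so its order complex is the spine $K_N$ and an order- and dimension-preserving self-map of it is a simplicial automorphism of $K_N$. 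One should recognise this sub-poset intrinsically from data already preserved by $\widehat\Phi$ --- the set of maximal free splittings, the refinement relation, and the valence function on $FS_N$; heuristically a marked graph should be detectable among the free splittings by the structure of the maximal free splittings refining it and collapsing from it, together with valence constraints (a free splitting already has infinite valence in $FS_N$ once it carries a vertex group of rank $\ge 2$). Granting this, \cite{BV01} produces a unique $\phi\in\text{Out}(F_N)$ that induces $\widehat\Phi$ on $K_N$.

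It remains to propagate this from $K_N$ to all of $FS'_N$. Replacing $\Phi$ by $\phi^{-1}\Phi$, we may assume that $\widehat\Phi$ fixes every marked graph, and we want $\Phi=\text{id}$. Every free splitting enlarges to a maximal one (by the bound on the number of edges, Theorem~\ref{bf}, as in the proof of Lemma~\ref{adjacent-max}), so each one-edge free splitting $S$ is a one-edge collapse of some marked graph and is determined by the collection of marked graphs lying above it together with the corresponding collapses; since $\widehat\Phi$ fixes all of these and respects refinement, it fixes $S$. Hence $\widehat\Phi$, and therefore $\Phi$, is the identity.

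The step I expect to be the main obstacle is exactly the intrinsic recognition of the spine $K_N$ inside $FS_N$. It is genuinely delicate, because collapsing a loop in a marked graph produces a nontrivial vertex group (so collapses of marked graphs need not be marked graphs) while a free splitting with nontrivial vertex groups can be refined to a marked graph; consequently no crude ``proximity to a maximal free splitting'' criterion isolates the marked graphs, and one must tease out the triviality of vertex stabilisers from finer combinatorial invariants such as counts of orbits of edges and of vertices. Once that is in place, Bridson--Vogtmann's theorem does the substantive work and the rest is bookkeeping.
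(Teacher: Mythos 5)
This theorem is cited from \cite{AS11}; the paper takes it as a black box and gives no proof, so there is no in-paper argument against which to compare. Judged on its own terms, your proposal correctly identifies both the strategy Aramayona and Souto actually use (reduce to Bridson--Vogtmann's rigidity of the spine $K_N$) and the correct easy half (injectivity via Theorem~\ref{t:ss}). But the heart of the matter --- showing that a simplicial automorphism of $FS'_N$ carries the sub-poset of free splittings with trivial vertex stabilisers to itself --- is exactly the step you flag as an ``expected obstacle'' and then do not resolve. That step is not bookkeeping; it is the main content of Aramayona--Souto's paper. Without it the reduction to \cite{BV01} does not go through, and the ``propagation'' paragraph at the end has nothing to propagate from.

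The specific difficulty you raise is also the right one, and your proposed heuristics do not close it. The valence criterion separates free splittings carrying a rank-$\ge 2$ vertex group (infinite valence in $FS_N$) from the rest, but it cannot distinguish a splitting with all vertex groups trivial from one whose only nontrivial vertex group is infinite cyclic: both have finite valence. Your suggestion to count orbits of vertices would resolve this via an Euler-characteristic computation, but the number of orbits of \emph{vertices} of a free splitting is not visibly encoded in the combinatorics of $FS'_N$ the way the number of orbits of edges is (via the count of one-edge collapses, Theorem~\ref{t:ss}); extracting it requires an argument. In \cite{AS11} this is handled by passing to the sphere-system model of $FS_N$ and analysing complementary pieces, which is a genuinely different mechanism from anything sketched here. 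So the proposal is a plausible outline with the correct target, but it has a genuine gap at its central step.
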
 

This implies a similar statement for $FS_N$. 

\begin{prop}\label{free-automorphisms}
For all $N\ge 3$, the natural map from $\text{Out}(F_N)$ to the isometry group of $FS_N$ is an isomorphism.
\end{prop}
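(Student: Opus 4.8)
The plan is to derive Proposition~\ref{free-automorphisms} directly from Aramayona--Souto's Theorem~\ref{Aramayona-Souto} by exploiting the combinatorial relationship between $FS_N$ and $FS_N'$ described just above the statement: the graph $FS_N'$ has one-edge free splittings as vertices, with an edge when they are compatible, and after filling in higher simplices $FS_N$ is the $1$-skeleton of the barycentric subdivision of $FS_N'$. The key point is that the barycentric subdivision of a simplicial complex is an honest invariant of the complex, so its automorphism group agrees with that of the original complex. Thus the main task is to show that an isometry (i.e. simplicial automorphism) of the graph $FS_N$ canonically extends to a simplicial automorphism of the flag-type complex on $FS_N'$, and conversely.

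First I would make precise the statement that $FS_N$ is the $1$-skeleton of the barycentric subdivision of $FS_N'$ (viewed as a flag complex, or at least with all simplices spanned by pairwise-compatible one-edge splittings filled in): the vertices of $FS_N$ are precisely the $k$-edge free splittings for $k\ge 1$, and by Theorem~\ref{t:ss} such a splitting corresponds bijectively to the set of its $k$ one-edge collapses, i.e.\ to a simplex of $FS_N'$. Two vertices of $FS_N$ are adjacent exactly when one of the corresponding simplices of $FS_N'$ is a face of the other. This is exactly the definition of the $1$-skeleton of the barycentric subdivision. Next I would invoke the general fact that a simplicial automorphism of the barycentric subdivision $\mathrm{sd}(K)$ of a simplicial complex $K$ preserves the partial order given by the face relation on simplices of $K$ (because this order is encoded in the combinatorics of $\mathrm{sd}(K)$ — for instance, vertices of $\mathrm{sd}(K)$ corresponding to vertices of $K$ are detected as those lying in a maximal clique of a certain type, or more simply the rank in the poset is recovered from local combinatorial data), and hence induces an order-preserving bijection of the face poset of $K$, which in turn induces a simplicial automorphism of $K$ itself. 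Applying this with $K$ the complex on $FS_N'$, an isometry of $FS_N$ yields an automorphism of $FS_N'$, which by Theorem~\ref{Aramayona-Souto} is realized by a (unique, since the $FS_N'$-action of $\mathrm{Out}(F_N)$ is faithful) element of $\mathrm{Out}(F_N)$; one then checks this same element induces the original isometry of $FS_N$ by tracking its effect on one-edge splittings, which are the vertices of $FS_N$ of minimal edge-number. Injectivity of the map $\mathrm{Out}(F_N)\to\mathrm{Isom}(FS_N)$ follows since it already is injective on $FS_N'\subseteq FS_N$.

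The main obstacle I anticipate is the step asserting that an automorphism of $FS_N$ respects the "level" of a vertex, i.e.\ distinguishes which vertices of $FS_N$ correspond to vertices of $FS_N'$ (the one-edge splittings) versus higher-dimensional simplices. This is the only place where real content beyond formal nonsense enters, and it is essentially the assertion that the face-poset structure of $FS_N'$ is reconstructible from the graph $FS_N$ alone. Fortunately, much of this has effectively already been done: Remark~\ref{edge-inv} records that any isometry of $\mathcal{G}$ (in particular of $FS_N$, since the same Scott--Swarup counting argument applies) preserves the number of edge orbits of each splitting, so the level is automatically preserved, and the collapse/refinement relation is preserved as well. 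So in fact the proof is short: combine Remark~\ref{edge-inv} (giving that an $FS_N$-automorphism induces an automorphism of the face poset of $FS_N'$, hence of $FS_N'$) with Theorem~\ref{Aramayona-Souto}, and finally observe that realizing the induced $FS_N'$-automorphism by $\phi\in\mathrm{Out}(F_N)$ forces $\phi$ to act as the original isometry on all of $FS_N$ since a higher-edge splitting is determined by its one-edge collapses.
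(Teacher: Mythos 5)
Your proposal is essentially the same as the paper's proof: use Proposition~\ref{max-preserved} (via Remark~\ref{edge-inv}) to show that an isometry of $FS_N$ preserves the set of one-edge splittings and the compatibility relation, thereby inducing an automorphism of $FS_N'$; apply Theorem~\ref{Aramayona-Souto}; then conclude by observing that any free splitting is determined by its set of adjacent one-edge splittings (Theorem~\ref{t:ss}). One caveat worth flagging: the ``general fact'' you invoke about barycentric subdivisions is false as stated (the $1$-skeleton of the barycentric subdivision of a $2$-simplex has automorphisms not induced from the simplex), but you correctly identify this as the crux and patch it with the specific combinatorial input from Remark~\ref{edge-inv} and Proposition~\ref{max-preserved}, so the final argument is sound.
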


\begin{proof}
Let $f$ be an isometry of $FS_N$, and denote by $X$ the subset of $FS_N$ consisting of one-edge splittings. Proposition \ref{max-preserved} implies that $f(X)=X$. In addition, two distinct one-edge splittings $T_1$ and $T_2$ admit a common refinement if and only if $d_{FS_N}(T_1,T_2)=2$. Hence $f$ maps pairs of compatible one-edge splittings to pairs of compatible one-edge splittings, so it induces an isometry of $FS'_N$. Using Theorem \ref{Aramayona-Souto}, we can thus assume, up to precomposing $f$ by an element of $\text{Out}(F_N)$, that $f$ fixes $X$ pointwise. As every free splitting is characterized by the set of one-edge splittings that are adjacent to it in $FS_N$ (Theorem \ref{t:ss}), this implies that $f$ is the identity map. The claim follows.
\end{proof}

As we now know that $FS_N$ is preserved by isometries of $FZ_N$, $FZ_N^{max}$, and $VS_N$, composing with an appropriate element of $\text{Out}(F_N)$ allows us to restrict our attention to isometries which fix $FS_N$ pointwise. 

\section{Any isometry that fixes $FS_N$ pointwise is the identity.}\label{sec-end}

Throughout the section, we will denote by $\mathcal{G}$ one of the graphs $FZ_N$, $FZ_N^{max}$ or $VS_N$. We define a \emph{bad} splitting of $F_N$ to be a splitting of the form $(F_{N-1}\ast\langle w^t\rangle)\ast_{\langle w\rangle}$, where $t\in F_N$ is a stable letter of the HNN extension, and $w\in F_{N-1}$ is not contained in any proper free factor of $F_{N-1}$. A \emph{good} splitting is a splitting which is not bad. The following proposition gives a characterization of bad splittings among one-edge splittings. Given $w\in F_N$, we denote by $\text{Fill}(w)$ the smallest free factor of $F_N$ that contains $w$.

\begin{prop} \label{characterization-bad-torus}
Let $T$ be a one-edge splitting of $F_N$. 
\begin{itemize} 
\item If $T$ is good, then there are infinitely many one-edge free splittings that are compatible with $T$.
\item If $T$ is bad, of the form $(A\ast\langle w^t\rangle)\ast_{\langle w\rangle}$, then the splitting $A\ast$ is the only one-edge free splitting that is compatible with $T$. 
\end{itemize}
\end{prop}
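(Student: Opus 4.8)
The plan is to run through the four types of one-edge cyclic splitting of $F_N$ recalled before the statement; since a bad splitting is necessarily a nonseparating $\mathbb{Z}$-splitting, for the other three types we must always exhibit infinitely many compatible one-edge free splittings. I will repeatedly use the following observation, which turns this into a problem of counting two-edge splittings: if $U$ and $U'$ are two-edge cyclic splittings which both collapse onto $T$ along one edge-orbit and onto a one-edge free splitting $S$ along the other, then by Theorem~\ref{t:ss} each of $U,U'$ is the unique two-edge refinement of $\{T,S\}$, so $U=U'$; hence distinct two-edge refinements of $T$ of this shape have distinct associated $S$. If $T$ is a free splitting then (as $N\ge 3$) it has a vertex group of rank $\ge 2$; splitting off from it infinitely many pairwise non-conjugate rank-one free factors produces infinitely many distinct two-edge free refinements of $T$, each collapsing onto $T$ and onto a one-edge free splitting. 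If $T=A\ast_{\langle w\rangle}(B\ast\langle w\rangle)$ is a separating $\mathbb{Z}$-splitting then $T$ is good, and the two-edge splittings $A\ast_{\langle w\rangle}\langle w\rangle\ast\langle b_1 w^i,b_2,\dots,b_k\rangle$ from the proof of Lemma~\ref{one-edge-valence} collapse onto $T$ (collapsing the trivial edge) and onto the one-edge free splitting $A\ast\langle b_1 w^i,b_2,\dots,b_k\rangle$ (collapsing the $\langle w\rangle$-edge); as they are pairwise distinct, the observation applies.

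It remains to treat $T=(C\ast\langle w^t\rangle)\ast_{\langle w\rangle}$ with $C$ a corank-one free factor and $w\in C$; write $G:=C\ast\langle w^t\rangle$ for the vertex group. I claim that a one-edge free splitting $S$ of $F_N$ is compatible with $T$ if and only if there is a one-edge free splitting $\Theta$ of $G$ in which both $w$ and $w^t$ are elliptic, with $S$ obtained by blowing up the (unique) vertex of $T$ along $\Theta$ — reattaching the $\langle w\rangle$-edge of $T$ with its two ends at the vertices of $\Theta$ fixed by $w$ and by $w^t$ (via the two edge-inclusions of the HNN extension) — and then collapsing the edge of $\Theta$; moreover $\Theta\mapsto S$ is a bijection. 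Indeed, by Theorem~\ref{t:ss} a compatible pair $T,S$ of distinct one-edge splittings has a unique two-edge refinement $U$, which collapses onto $T$ by collapsing one of its two edge-orbits, say $E$. Collapsing $E$ produces the $G$-vertices of $T$, so over one such vertex the $E$-edges of $U$ with their endpoints form a one-edge $G$-splitting $\Theta$; the single remaining edge of $U$ over the edge of $T$ has stabilizer $\langle w\rangle$ and joins two copies of $\Theta$ at points fixed by $w$ and by $w^t$; and collapsing the $\langle w\rangle$-orbit of $U$ recovers $S$, whose edge stabilizers coincide with those of $\Theta$, so $S$ is free exactly when $\Theta$ is free. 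Conversely any such $\Theta$ manifestly yields a valid two-edge refinement of $T$, and uniqueness of $U$ gives the bijection.

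Suppose first that $w$ does not fill $C$, and write $C=D\ast C'$ with $w\in D$ and $C'=\langle c_1,\dots,c_m\rangle$ nontrivial. For $i\in\mathbb{N}$ the decomposition $G=(D\ast\langle w^t\rangle)\ast\langle c_1(w^t)^i,c_2,\dots,c_m\rangle$ is a one-edge free splitting of $G$ in which $w$ and $w^t$ both lie in, hence are elliptic in, the first factor; these are pairwise distinct, so by the bijection $T$ is compatible with infinitely many one-edge free splittings. Now suppose $w$ fills $C$, i.e.\ $\mathrm{Fill}_C(w)=C$. The decomposition $\Theta_0\colon G=C\ast\langle w^t\rangle$ is one of the splittings $\Theta$ above, and the corresponding collapse gives $C\ast$, so $C\ast$ is compatible with $T$. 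Conversely, let $\Theta$ be any such splitting. Since $\Theta$ has trivial edge stabilizers, the action of the free factor $C\le G$ on the Bass--Serre tree of $\Theta$ exhibits $C$ as a free product of free factors of $C$ (and a free group), one summand of which contains $w$; as $w$ fills $C$ this summand is $C$ itself, so $C$ fixes a vertex of $\Theta$, whose stabilizer $V$ then contains $C$. Thus $\mathrm{rk}(V)\ge\mathrm{rk}(C)=N-1$, while a one-edge free splitting of the rank-$N$ group $G$ has all vertex stabilizers of rank $\le N-1$, so $\mathrm{rk}(V)=N-1$, and the Kurosh subgroup theorem applied to $C\le V\le G=C\ast\langle w^t\rangle$ then forces $V=C$. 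A nonseparating such $\Theta$ would be $C\ast$ and would need $w^t$ conjugate into $C$, which is impossible since the retraction $G\twoheadrightarrow\langle w^t\rangle$ killing $C$ sends $w^t$ to a generator; so $\Theta$ is separating, $G=C\ast V'$ with $\mathrm{rk}(V')=1$, $w^t$ is conjugate into $V'$, and a direct computation of the collapse above gives $S=C\ast$. Hence $C\ast$ — which is the splitting denoted $A\ast$ in the statement — is the only one-edge free splitting compatible with $T$, which is the proposition in the bad case.

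The delicate step is the reduction in the third paragraph — recasting compatibility with the HNN splitting $T$ as a statement about one-edge splittings of the vertex group $G$ while keeping track of which subgroups remain elliptic under a blow-up — together with the Kurosh/rank argument in the last paragraph, which is the only place where the hypothesis that $w$ fills $C$ is genuinely used; the various ``infinitely many'' claims are routine once the blow-up and transvection families are written down.
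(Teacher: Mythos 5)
Your proof is correct, but in the nonseparating $\mathbb{Z}$-splitting cases it takes a genuinely different route from the paper. The paper handles the bad case by examining the possible quotient-graph shapes of a common two-edge refinement $S'$ of $S$ and $T$ (the three pictures of Figure~\ref{fig-two-edge-refinement}), and rules out two of them by applying the Shenitzer--Swarup unfolding Lemma~\ref{unfold} to $S'$: an unfolding of the $\langle w\rangle$-edge there would place $w$ in a corank-two free factor of $F_N$, contradicting that $w$ fills $A=C$. You instead recast compatibility with the one-vertex HNN splitting $T$ as the classification of one-edge free splittings $\Theta$ of the vertex group $G=C\ast\langle w^t\rangle$ in which $w$ and $w^t$ are both elliptic, and then use the Kurosh subgroup theorem together with a rank count: $w$ filling $C$ forces $C$ to fix a vertex of $\Theta$, whose stabilizer must then equal $C$, and $w^t$ being primitive in $G$ (it is a basis element of $G=C\ast\langle w^t\rangle$) forces the other vertex group to be a conjugate of $\langle w^t\rangle$, after which the collapse computation yields $S=C\ast$ in every case. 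This reorganization treats the good and bad nonseparating cases uniformly (the only fork is whether $w$ fills $C$), and it replaces the reliance on Lemma~\ref{unfold} with elementary free-group arguments; the paper's route is more direct once Figure~\ref{fig-two-edge-refinement} is in hand but makes the unfolding lemma do the heavy lifting. One small point worth making explicit in your reduction step is why the subtree $\Theta$ sitting over a vertex of $T$ is a \emph{minimal} $G$-tree (so genuinely a one-edge $G$-splitting) — this follows from minimality of the common refinement $U$ together with the fact that the $\langle w\rangle$-edges attach at the leaves of $\Theta$ that would otherwise be extremal — and you do implicitly use the primitivity of $w^t$ in $G$ in the final ``direct computation'' to conclude that the collapsed vertex group is $\langle C,\, t^{-1}\langle w^t\rangle t\rangle=\langle C, w\rangle=C$; both are easy but deserve a sentence.
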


\begin{proof}
Let $T$ be a good one-edge splitting. The situation when $T$ is a free splitting or a separating one-edge $\mathbb{Z}$-splitting is covered in the proof of Lemma \ref{one-edge-valence}, so we assume that $T$ is of the form $(A\ast\langle w^t\rangle)\ast_{\langle w\rangle}$, where $A$ is a corank one free factor of $F_N$, and $w$ is contained in some proper free factor of $A$. Let $A'$ be a complementary free factor of $\text{Fill}(w)$ in $A$, and let $\{a'_1,\dots,a'_i\}$ be a free basis of $A'$. Then for all $k\in\mathbb{Z}$, the free splitting $(\text{Fill}(w)\ast\langle t\rangle)\ast\langle a'_1(w^t)^k,\dots,a'_i\rangle$ is compatible with $T$. The element $a'_1(w^t)^l$ is elliptic in this splitting if and only if $l=k$, so we obtain infinitely many distinct splittings as $k$ varies.

Now assume that $T$ is a bad one-edge splitting, of the form $(A\ast\langle w^t\rangle)\ast_{\langle w\rangle}$, where $A$ is a corank one free factor of $F_N$, and $w$ is not contained in any proper free factor of $A$. Let $S$ be a free splitting that is compatible with $T$. Then the common two-edge refinement $S'$ of $S$ and $T$ has one of the forms displayed on Figure \ref{fig-two-edge-refinement}, for some free factors $A'$ and $B'$ of $F_N$ and some $t',t_1,t_2\in F_N$. In Cases 1 and 2, slightly unfolding the edge with nontrivial stabilizer in $S'$ as in Lemma \ref{unfold} shows that $w$ should be contained in a corank $2$ free factor of $F_N$, a contradiction. Hence Case 3 occurs, and as $w\in A'$, we necessarily have $A'=\text{Fill}(w)$. The claim follows. 
\end{proof}

\begin{figure}
\begin{center}
\includegraphics{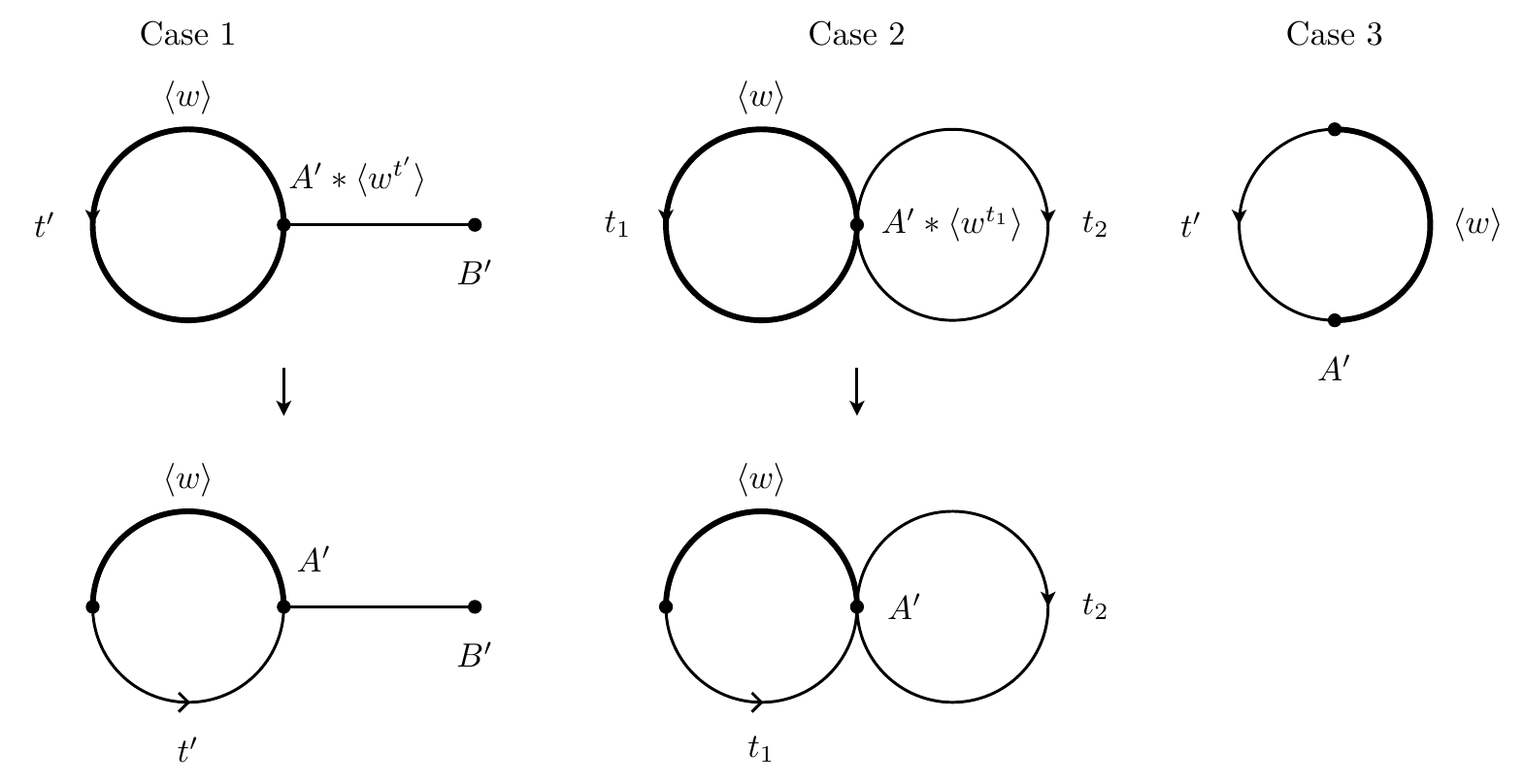}
\caption{The situation in the proof of Proposition \ref{characterization-bad-torus}.}
\label{fig-two-edge-refinement}
\end{center}
\end{figure}

From Proposition~\ref{max-preserved} we know that the set of one-edge splittings is preserved under any isometry of $\mathcal{G}$. Proposition \ref{characterization-bad-torus} then implies: 

\begin{prop} \label{preservation}
Any isometry of $\mathcal{G}$ which restricts to the identity on $FS_N$ preserves both the set of bad one-edge splittings and the set of good one-edge splittings (setwise).
\qed
\end{prop}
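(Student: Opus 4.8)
The plan is to deduce the proposition directly from Propositions~\ref{max-preserved} and~\ref{characterization-bad-torus}, with only one minor technical point needing care. Let $f$ be an isometry of $\mathcal{G}$ that restricts to the identity on $FS_N$. First I would note that $f$ maps the set of one-edge splittings of $F_N$ to itself, by Proposition~\ref{max-preserved}; since moreover $f$ fixes $FS_N$ pointwise, it fixes every one-edge free splitting, and hence preserves the set of one-edge free splittings of $F_N$.

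Next I would translate compatibility into the metric of $\mathcal{G}$. If $T$ and $T'$ are distinct one-edge splittings, then neither can be a proper refinement of the other, so any path of length $2$ joining them in $\mathcal{G}$ must pass through a two-edge splitting refining both; conversely, if $T$ and $T'$ are compatible their common two-edge refinement inherits its edge stabilizers from $T$ and $T'$, hence lies in $\mathcal{G}$ (in the case $\mathcal{G}=VS_N$ one additionally uses that collapsing the appropriate edge of this refinement returns a very small splitting, which forbids the refinement from carrying a nontrivial tripod stabilizer, together with the fact that collapsing a free edge can never create one). Thus for a fixed one-edge splitting $T$, the set of one-edge free splittings lying at distance exactly $2$ from $T$ in $\mathcal{G}$ is precisely the set of one-edge free splittings compatible with $T$.

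Now I would apply Proposition~\ref{characterization-bad-torus}: a good one-edge splitting is compatible with infinitely many one-edge free splittings, while a bad one-edge splitting $(A\ast\langle w^t\rangle)\ast_{\langle w\rangle}$ is compatible with exactly one, namely $A\ast$. Combining this with the previous step, a one-edge splitting $T$ is good if and only if there are infinitely many one-edge free splittings at distance $2$ from $T$ in $\mathcal{G}$. This condition is manifestly preserved by $f$: it permutes the one-edge splittings, fixes the set of one-edge free splittings, and preserves distances. Hence $f$ maps good one-edge splittings to good ones and bad one-edge splittings to bad ones, which is the assertion of the proposition.

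The only step that is not completely formal is the second one — verifying that compatibility of one-edge splittings of $\mathcal{G}$ is faithfully recorded by being at distance $2$ in $\mathcal{G}$, i.e.\ that the common refinement stays inside $\mathcal{G}$. This is immediate for $FZ_N$ and $FZ_N^{max}$, since refinement does not change the edge stabilizers along the uncollapsed edges; for $VS_N$ it requires the small observation about tripod stabilizers indicated above. Everything else is a direct invocation of the two cited propositions.
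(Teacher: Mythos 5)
Your proof is correct and follows essentially the same route the paper has in mind: the paper presents Proposition~\ref{preservation} as a direct consequence of Propositions~\ref{max-preserved} and~\ref{characterization-bad-torus}, and you have simply made explicit the step that compatibility of one-edge splittings corresponds to distance exactly $2$ in $\mathcal{G}$, including the check that the common two-edge refinement stays inside $FZ_N^{max}$ or $VS_N$. The only quibble is in the phrasing of the $VS_N$ check: the relevant point is not that collapsing a free edge cannot \emph{create} a nontrivial tripod stabilizer, but rather that a nontrivial tripod stabilizer of the two-edge refinement would force all three edges of the tripod to lie in the $\mathbb{Z}$-edge orbit (free edges cannot carry it), and hence would survive the collapse to give a nontrivial tripod stabilizer in $T$, contradicting that $T$ is very small.
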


We will now prove that any such isometry fixes the set of good one-edge splittings pointwise, by showing that any good one-edge splitting is characterized by the collection of one-edge free splittings that are compatible with it. The following lemma can be proved by considering common refinements of $T$ and $T'$ on a case-by-case basis. 

\begin{lemma}\label{disjointness}
Let $T$ and $T'$ be two compatible cyclic splittings of $F_N$.
\begin{itemize}
\item Any element of $F_N$ that fixes an edge of $T$ is elliptic in $T'$.  
\item If $T$ and $T'$ are both separating one-edge splittings, then every elliptic subgroup of $T'$ is either contained in or contains an elliptic subgroup of $T$.
\item If $T$ is a one-edge nonseparating splitting and $T'$ is a one-edge separating splitting, then some vertex subgroup of $T'$ is contained in a vertex subgroup of $T$.
\qed
\end{itemize}
\end{lemma}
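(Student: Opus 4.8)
The plan is to prove Lemma~\ref{disjointness} by a direct case analysis, using the fact (Theorem~\ref{t:ss}) that two compatible one-edge splittings have a common two-edge refinement $\widehat{T}$, together with the structural classification of one-edge cyclic splittings recalled in Section~\ref{s:vs}. For the first bullet, let $g\in F_N$ fix an edge $e$ of $T$. Since $T$ and $T'$ are compatible, fix a common refinement $\widehat{T}$ and collapse maps $\pi\colon\widehat{T}\to T$, $\pi'\colon\widehat{T}\to T'$. The preimage $\pi^{-1}(e)$ is a nondegenerate $g$-invariant arc (or subtree) in $\widehat{T}$; since $F_N$ acts on $\widehat{T}$ without inversions and $g$ preserves this arc, $g$ fixes at least one edge of $\widehat{T}$, hence fixes a point of $\widehat{T}$, hence its image under $\pi'$ is a point of $T'$. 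So $g$ is elliptic in $T'$. (Alternatively, one observes directly that an element of $F_N$ which is hyperbolic in $T'$ has an axis whose image in $\widehat{T}$ is a line mapping onto the axis in $T$, so such an element cannot fix an edge of $T$.)

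For the second bullet, assume $T$ and $T'$ are both separating one-edge splittings, say $T$ corresponds to $F_N=A\ast_{C}B$ (with $C$ possibly trivial) and similarly for $T'$. Let $H$ be an elliptic subgroup of $T'$, i.e. $H$ is contained in a conjugate of a vertex group of $T'$; without loss of generality $H$ is contained in a vertex group $G'_{v'}$ of $T'$. Passing to the common refinement $\widehat{T}$: the vertex $v'$ of $T'$ lifts to a subtree $Y'=\pi'^{-1}(v')$ of $\widehat{T}$ with stabilizer $G'_{v'}$, and $\pi(Y')$ is a subtree of the segment underlying $T$ (a point or the whole edge). If $\pi(Y')$ is a single vertex, then $G'_{v'}$, hence $H$, is contained in a vertex group of $T$. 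If $\pi(Y')$ is not a single point then $Y'$ contains an entire orbit of the edge of $\widehat{T}$ lying over the edge of $T$, so $G'_{v'}$ contains the stabilizer of that edge, which is a conjugate of $C$, i.e. $G'_{v'}$ contains an elliptic subgroup of $T$. One must then note that in the second situation $H$ still sits inside $G'_{v'}$, and $G'_{v'}$ contains an edge stabilizer of $T$, which is the asserted alternative; the point is that separating one-edge splittings are "thin" enough that the image of a vertex subtree of one in the other is either a point or swallows the whole edge. The same bookkeeping, applied when $T$ is nonseparating (so $\widehat{T}/F_N$ has a vertex of valence $2$ or a loop structure) and $T'$ is separating, gives the third bullet: the separating edge of $T'$ lifts to an edge of $\widehat{T}$ which, after collapsing to $T$, must land in a vertex, forcing one of the two vertex groups of $T'$ into a vertex group of $T$.

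The main obstacle I anticipate is not any single deep step but the sheer number of cases: each of $T$, $T'$ ranges over four types (separating/nonseparating free or $\mathbb{Z}$-splitting), and for each pair one must check how a common two-edge refinement can look and track stabilizers through the two collapse maps. The cleanest way to organize this is to work entirely inside the common refinement $\widehat{T}$ and use two general principles: (i) collapsing edges can only enlarge vertex stabilizers, never shrink them, and (ii) an edge stabilizer of $T$ appears as a subgroup of whichever vertex groups of $\widehat{T}$ lie in the preimage, and after collapsing to $T'$ it becomes elliptic by the first bullet. With those in hand, the "either contained in or contains" dichotomy in the second bullet is exactly the dichotomy "the relevant vertex subtree maps to a point / maps onto the edge," and the third bullet is the degenerate version where one side has no nontrivial edge group to contain. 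I would present the first bullet in full and then remark that the remaining two follow from the same refinement picture, spelling out one representative case (say $T$ nonseparating free, $T'$ separating $\mathbb{Z}$) and leaving the parallel cases to the reader.
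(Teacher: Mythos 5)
The paper offers no written proof of this lemma, only the remark that it follows from a case-by-case analysis of common refinements; your sketch uses exactly that approach and is a reasonable reconstruction of the intended argument. Two points should be tightened. In the first bullet, the step ``since $g$ preserves the subtree $\pi^{-1}(e)$ and there are no inversions, $g$ fixes an edge of $\widehat{T}$'' is not a valid inference on its own: a subtree-preserving element need not fix any edge (an elliptic element can fix only a vertex, and you have not yet ruled out $g$ acting hyperbolically on $\widehat{T}$). The clean justification is that a collapse map restricts to a homeomorphism on the uncollapsed open edges, so $\pi^{-1}(\mathrm{int}(e))$ is a \emph{single} open edge $\hat e$ of $\widehat{T}$; equivariance then forces $g\cdot\hat e=\hat e$, and the absence of inversions gives $g$ fixing $\hat e$ pointwise, hence elliptic in $\widehat{T}$ and therefore in $T'$. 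In the second bullet, ``$\pi(Y')$ is a point or the whole edge'' is only accurate at the level of the quotient graph; in $T$ itself the correct dichotomy is that $\pi(Y')$ is either a single vertex (in which case $G'_{v'}$, hence $H$, lies in a vertex group of $T$) or a subtree of $T$ containing some edge $e$ (in which case, since $\hat e\subseteq Y'$, the group $G'_{v'}$ contains the edge group $G_e$). With those repairs the bookkeeping goes through, and the third bullet is the same analysis applied to the single possible shape of $\widehat{T}/F_N$ (a loop with a tail), as you indicate. Note also that the second bullet is really being used for vertex stabilizers --- as literally stated for arbitrary elliptic subgroups it is vacuous via the trivial subgroup --- and your argument does establish the intended statement at the level of vertex groups.
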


\begin{prop} \label{distinguishing-tori}
Let $T$ and $T'$ be two good one-edge $\mathbb{Z}$-splittings of $F_N$. There exists a one-edge free splitting of $F_N$ which is compatible with exactly one of the splittings $T$ or $T'$.
\end{prop}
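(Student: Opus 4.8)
The plan is to establish the equivalent injectivity statement: the map sending a good one-edge $\mathbb{Z}$-splitting $T$ to the set $\mathcal{F}(T)$ of one-edge free splittings of $F_N$ that are compatible with $T$ is injective. Granting this, when $T\neq T'$ the sets $\mathcal{F}(T)$ and $\mathcal{F}(T')$ differ, and any element of their symmetric difference is a one-edge free splitting compatible with exactly one of $T$ and $T'$.

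The structural tool is a description of $\mathcal{F}(T)$. Write $\langle w\rangle$ for the edge stabilizer of $T$. If $S\in\mathcal{F}(T)$ then by Theorem \ref{t:ss} the common refinement $\widehat{T}$ of $S$ and $T$ has exactly two edges: collapsing one of them gives $T$, so that surviving edge has stabilizer $\langle w\rangle$, and collapsing the other (necessarily free) edge gives $S$. Thus $\widehat T$ is obtained from $T$ by blowing up one of its vertices $v$ along a one-edge free splitting of $G_v$ in which every copy of the edge group of $T$ inside $G_v$ is made elliptic, and $S$ is then recovered by collapsing the $\langle w\rangle$-edge of $\widehat T$. (When $T=A\ast_{\langle w\rangle}(B\ast\langle w\rangle)$ is separating, $v$ is one of the two vertices and the condition is that $\langle w\rangle$ be elliptic in the blow-up; when $T=(C\ast\langle w^t\rangle)\ast_{\langle w\rangle}$ is nonseparating, $v$ is the unique vertex and both copies $\langle w\rangle\subseteq C$ and $\langle w^t\rangle$ of the edge group inside $G_v$ must be made elliptic.) Lemma \ref{disjointness} then shows that every $S\in\mathcal{F}(T)$ makes $w$, hence $\mathrm{Fill}(w)$, elliptic.

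I would then run a case analysis on the types of $T$ and $T'$. If the conjugacy classes of the free factors involved in $T$ and $T'$ (the marked pair $A,B$ for a separating splitting, the factor $C$ for a nonseparating one), or the types themselves, differ, then using the $\mathcal{F}$-description above together with the nesting statements of Lemma \ref{disjointness} (elliptic subgroups of two compatible separating splittings are nested; a vertex group of a separating splitting compatible with a nonseparating one lies inside a vertex group of the latter) one exhibits a one-edge free splitting adapted to one of $T,T'$ and incompatible with the other. The decisive case is when $T$ and $T'$ agree on this coarse data but have distinct edge stabilizers; in the separating case, after conjugating we may write $T=A\ast_{\langle w\rangle}(B\ast\langle w\rangle)$ and $T'=A\ast_{\langle v\rangle}(B\ast\langle v\rangle)$ with $\langle w\rangle\neq\langle v\rangle$ and $w,v\in A$. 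Pick a primitive element $b$ of $B$ and write $B=\langle b\rangle\ast B'$; then $B\ast\langle w\rangle=\langle w\rangle\ast B=(\langle w\rangle\ast B')\ast\langle bw\rangle$, and blowing up the vertex $B\ast\langle w\rangle$ of $T$ along this decomposition and then collapsing the $\langle w\rangle$-edge yields the one-edge free splitting $S=(A\ast B')\ast\langle bw\rangle\in\mathcal{F}(T)$. Were $S$ compatible with $T'$, then — since $\mathbb{Z}$ has no splitting with nontrivial edge stabilizer, so the two-edge common refinement must blow up the vertex $A\ast B'$ of $S$ and re-attach the cyclic factor $\langle bw\rangle$ — the group $\langle bw\rangle$ would be conjugate into a vertex group of $T'$, i.e. $bw$ would be conjugate into $A$ or into $B\ast\langle v\rangle$. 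But $bw$ has translation length $2$ in the Bass--Serre tree of $A\ast B$, so it is not conjugate into $A$; and a normal-form computation in $F_N=A\ast B$ gives $A\cap(B\ast\langle v\rangle)=\langle v\rangle$, so $w\notin\langle v\rangle$ forces $w\notin B\ast\langle v\rangle$, whence $bw$ is not conjugate into $B\ast\langle v\rangle$ either. This contradiction gives $S\notin\mathcal{F}(T')$. The nonseparating version of this decisive step runs along the same lines, and it is here that the goodness hypothesis $\mathrm{Fill}_C(w)\subsetneq C$ enters: it is exactly what permits splitting off a nontrivial free factor $C_1$ of $C$ and thereby building a detecting free splitting in which both copies of the edge group, together with an auxiliary cyclic factor, are elliptic.

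The main obstacle is this last, edge-stabilizer-sensitive, family of cases, and within it the nonseparating subcase: one must check that the detecting free splitting fails to be compatible with $T'$ through \emph{every} blow-up of \emph{every} vertex of $T'$ (not merely the expected one), and one must simultaneously keep track of both the edge element and the stable letter $t$ of a nonseparating splitting. A related bookkeeping subtlety is that two distinct good one-edge $\mathbb{Z}$-splittings can agree on the conjugacy classes of their free factors and even of their edge stabilizer, differing only in the relative position of the edge stabilizer, so the edge-stabilizer-sensitive case genuinely has to be treated by hand rather than dismissed on coarse invariants.
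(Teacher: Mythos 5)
Your overall strategy (show that the map $T\mapsto\mathcal{F}(T)$ is injective) is equivalent to the statement and is exactly what the paper is implicitly doing, but the organization of your case analysis differs in a way that matters. The paper does not split into cases according to whether the ``coarse data'' of $T$ and $T'$ match. Instead it picks one explicit many-edge free splitting $S$ compatible with $T$ (Figures~\ref{fig-case1} and \ref{fig-case2}) and uses it as a \emph{filter}: if $T'$ is not compatible with $S$ you are done, and if it is, then compatibility with $S$ forces $T'$ into one of a short, explicit list of normal forms (Cases 1.1--1.7 and 2.1--2.5), each of which is then killed by a bespoke free splitting. This sidesteps any need to classify the ways in which the free-factor data of $T$ and $T'$ can differ.

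There are two genuine gaps in your proposal. First, the dichotomy ``coarse data of $T,T'$ differs vs.\ agrees'' is not well-posed as stated: for a separating splitting $A\ast_{\langle w\rangle}(B\ast\langle w\rangle)$ the complementary factor $B$ is not intrinsic to the splitting (only the vertex group $B\ast\langle w\rangle$ and the edge group $\langle w\rangle$ are), so ``$T$ and $T'$ involve the same $A,B$'' is a normalization you would have to justify, and the ``coarse data differs'' branch -- which you dismiss in one sentence by appealing to the nesting statements of Lemma~\ref{disjointness} -- genuinely requires an argument of its own. Second, and more seriously, you explicitly defer the nonseparating decisive case (``runs along the same lines'') while acknowledging it is the hard part. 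That is exactly where the paper spends most of its effort: one must rule out compatibility through \emph{every} possible blow-up of the unique vertex of $T'$, leading to five distinct sub-cases (2.1--2.5) each handled by a tailored free splitting, and the goodness hypothesis enters in a specific place. A sketch that identifies the hard case but does not carry it out is not a proof.

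One smaller point in the sub-case you did write out: you invoke $w\notin\langle v\rangle$ without justifying it. It is not automatic from $\langle w\rangle\neq\langle v\rangle$ (e.g.\ $w=v^2$); you need to say ``up to swapping $T$ and $T'$ we may assume $w\notin\langle v\rangle$,'' which is fine since the conclusion is symmetric in $T$ and $T'$. With that fix, the argument that $bw$ is not conjugate into $A$ or into $B\ast\langle v\rangle$ is correct, and it is a nice concrete witness for that one sub-case. But the sub-case is only a small slice of the proposition.
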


\begin{proof}
Up to interchanging $T$ and $T'$, we can assume that either $T$ is separating, or both $T$ and $T'$ are nonseparating.
\\
\\
\noindent \textit{Case 1} : The splitting $T$ is separating, of the form $A\ast_{\langle w\rangle}(B\ast\langle w\rangle)$, where $A$ and $B$ are nontrivial proper free factors of $F_N$, and $w\in A$. We denote by $\{b_1,\dots,b_k\}$ a free basis of $B$. Let $\text{Fill}(w)$ be the smallest free factor of $A$ that contains $w$, and let $A'$ be a complementary free factor (possibly $\text{Fill}(w)=A$ and $A'=\{e\}$). Let $S$ be a free splitting of $F_N$ whose quotient graph is obtained by taking a rose corresponding to a basis of $A'$ and a rose corresponding to a basis of $B$ and attaching each rose by an edge with trivial stabilizer to a vertex with stabilizer $\text{Fill}(w)$. This is displayed in Figure \ref{fig-case1}. It is compatible with $T$. If $T'$ is not compatible with $S$, then we are done, so we assume that $T'$ is compatible with $S$. The splitting $T'$ is thus obtained from $S$ by first equivariantly inserting an edge $e$ with nontrivial stabilizer (which collapses to the only vertex with nontrivial stabilizer in $S$) to get a splitting $T''$, and then collapsing all edges but $e$ in $T''$. This amounts to splitting the vertex group $\text{Fill}(w)$ of $S$ over $\mathbb{Z}$, which means inserting in $S$ either a "horizontal" separating edge (Cases 1.1 to 1.4), a loop-edge (Case 1.5) or a "vertical" leaf  edge out of the graph (Cases 1.6 and 1.7). Thanks to Lemma \ref{unfold}, we know that the splitting we get has one of the following forms.
\\
\\
\noindent \textit{Case 1.1} : The splitting $T'$ is of the form $(A'\ast C)\ast_{\langle w'\rangle}[(D\ast\langle w'\rangle)\ast B]$, where $C$ and $D$ are complementary proper free factors of $\text{Fill}(w)$, and $w'\in C$.\\
Then $T'$ is compatible with $(A'\ast C)\ast (D\ast B)$. However, if $T$ were compatible with $(A'\ast C)\ast (D\ast B)$, then the first point of Lemma \ref{disjointness} implies that $w$ should be conjugated into either $A'\ast C$ or $D\ast B$, which is not the case.
\\
\\
\noindent \textit{Case 1.2} : The splitting $T'$ is of the form $[A'\ast (C\ast\langle w'\rangle)]\ast_{\langle w'\rangle}(D\ast B)$, where $C$ and $D$ are complementary proper free factors of $\text{Fill}(w)$, and $w'\in D$.\\
Then the same splitting as in Case 1.1 works. 
\\
\\
\textit{Case 1.3} : The splitting $T'$ is of the form $(A'\ast \text{Fill}(w))\ast_{\langle w'\rangle}(B\ast\langle w'\rangle)=A\ast_{\langle w'\rangle}(B\ast\langle w'\rangle)$, where $w'\in\text{Fill}(w)$ and $w'\neq w^{\pm 1}$.\\
Up to exchanging the roles of $T$ and $T'$, we can assume that $w\notin\langle w'\rangle$. Then $T$ is compatible with $A\ast \langle b_1w,b_2,\dots,b_k\rangle$. However, if $T'$ were compatible with $A\ast \langle b_1w,b_2,\dots,b_k\rangle$, then by the second point of Lemma \ref{disjointness}, the subgroup $\langle b_1w,b_2,\dots,b_k\rangle$ should either contain or be contained in $B\ast \langle w'\rangle$ (these vertex stabilizers are malnormal and have nontrivial intersection). This is not the case, as $w' \not \in \langle b_1w,b_2,\dots,b_k\rangle $ and $b_1w \not\in B\ast \langle w'\rangle$. 
\\
\\
\noindent \textit{Case 1.4}: The splitting $T'$ is of the form $(A'\ast \langle w'\rangle)\ast_{\langle w'\rangle}(\text{Fill}(w)\ast B)$, with $w'\in \text{Fill}(w)$.\\
If $\text{Fill}(w)=\langle w\rangle$, then $T'$ is of the form $(B\ast\langle w\rangle)\ast_{\langle w'\rangle}(A'\ast\langle w'\rangle)$ (while $T$ is of the form $(B\ast\langle w\rangle)\ast_{\langle w\rangle} (A'\ast\langle w\rangle)$), and the claim follows from the argument of Case 1.3. Otherwise, let $w''\in \text{Fill}(w)\smallsetminus\langle w\rangle$. Then $T'$ is compatible with $A\ast \langle b_1w'',b_2,\dots,b_k\rangle$ (they have a common two-edge refinement of the form $(A'\ast\langle w'\rangle)\ast_{\langle w'\rangle} \text{Fill}(w)\ast\langle b_1w'',b_2,\dots,b_k\rangle$), while $T$ is not by the same argument as in Case 1.3.
\\
\\
\noindent \textit{Case 1.5} : The splitting $T'$ is of the form $[A'\ast(C\ast\langle w'\rangle^t)\ast B]\ast_{\langle w'\rangle}$, where $C$ is a corank one free factor of $\text{Fill}(w)$, and $w'\in C$, and $t\in\text{Fill}(w)$.\\
Then the splitting $(A'\ast B\ast C)\ast$ is compatible with $T'$, but not with $T$ since $w$ is not conjugated into $A'\ast B\ast C$.
\\
\\
\textit{Case 1.6} : The splitting $T'$ is of the form $(A'\ast B\ast C)\ast_{\langle w'\rangle} (C'\ast\langle w'\rangle)$, where $C\ast C'=\text{Fill}(w)$, and $w'\in C$.\\
In particular, we have $C'\neq\{e\}$, and the splitting $(A'\ast B\ast C)\ast C'$ is compatible with $T'$, but not with $T$ since $w$ is neither conjugated into $A'\ast B\ast C$, nor into $C'$.
\\
\\
\textit{Case 1.7}: The splitting $T'$ is of the form $[A'\ast B\ast (C\ast\langle w'\rangle)]\ast_{\langle w'\rangle} C'$, where $C\ast C'=\text{Fill}(w)$, and $w'\in C'$.\\
If $A'=\{e\}$ and $C'=\text{Fill}(w)$, then $T'$ is of the form $\text{Fill}(w)\ast_{\langle w'\rangle} (B\ast \langle w'\rangle)$ (while $T$ is of the form $\text{Fill}(w)\ast_{\langle w\rangle} (B\ast\langle w\rangle)$), so the claim follows from the argument of Case 1.3. 

Otherwise, the same splitting as in Case 1.6 works by the second point of Lemma \ref{disjointness}. Indeed, in this case, we have $B\neq\{e\}$ and $C\neq\text{Fill}(w)$. The group $A=A'\ast\text{Fill}(w)$ is neither contained in nor contains $A'\ast B\ast C$, so the conclusion follows.
\\
\\
\noindent\textit{Case 2} : Both splittings $T$ and $T'$ are nonseparating. By Lemma \ref{unfold}, the splitting $T$ is of the form $(A\ast\langle w^t\rangle)\ast_{\langle w\rangle}$ for some corank one free factor $A$ of $F_N$ and some $w\in A$. We denote by $\text{Fill}(w)$ the smallest free factor of $A$ that contains $w$, and we let $A'$ be a complementary free factor in $A$. As $T$ is assumed to be good, we have $\text{Fill}(w)\neq A$, and hence $A'\neq\{e\}$. We denote by $\{a'_1,\dots,a'_k\}$ a free basis of $A'$. Let $S$ be the free splitting displayed on Figure \ref{fig-case2}, it is compatible with $T$. Again, we may assume that $T'$ is compatible with $S$ (otherwise we are done). Passing from $S$ to $T'$ again requires inserting an edge with $\mathbb{Z}$ stabilizer to get a new tree $T''$, then equivariantly collapsing its complement in $T''$. As $T'$ is nonseparating, the inserted edge can either be a separating edge lying on the loop labelled by $t$ in $S$ (which leads to Cases 2.1 to 2.4) or a loop-edge (Case 2.5). 
\\
\\
\textit{Case 2.1}: The splitting $T'$ is of the form $[A'\ast C\ast (D\ast\langle w'\rangle)^{t}]\ast_{\langle w'\rangle}$, where $C$ and $D$ are two complementary free factors of $\text{Fill}(w)$, and $w'\in C$. \\
We have $w'\neq w^{\pm1}$ (otherwise $C=\text{Fill}(w)$ and $T'=T$). Up to exchanging the roles of $T$ and $T'$, we can assume that $w\notin\langle w'\rangle$. In this case, the splitting $T'':=(\text{Fill}(w)\ast\langle t\rangle)\ast\langle a'_1w^t,a'_2,\dots,a'_k\rangle$ is compatible with $T$. However, if $T''$ were also compatible with $T'$, by the third point of Lemma \ref{disjointness} one of the vertex stabilizers of $T''$ should be elliptic in $T'$. This is not the case, since both $t$ and $a'_1w^t$ are hyperbolic in $T'$.
\\
\\
\textit{Case 2.2} : The splitting $T'$ is of the form $[A'\ast C\ast (D\ast\langle w'\rangle)^{t^{-1}}]\ast_{\langle w'\rangle}$, where $C$ and $D$ are two complementary free factors of $\text{Fill}(w)$, and $w'\in C$.\\
Then the same splitting as in Case 2.1 works.
\\
\\
\textit{Case 2.3} : The splitting $T'$ is of the form $[A'\ast (C\ast\langle w'\rangle)\ast D^t]\ast_{\langle w'\rangle}$, where $C$ and $D$ are two complementary free factors of $\text{Fill}(w)$, and $w'\in D$.\\
We may also assume $C\ast\langle w'\rangle\neq\text{Fill}(w)$, otherwise we are in a particular case of Case 2.1. Let $w''\in \text{Fill}(w)\smallsetminus (C\ast\langle w'\rangle)$. Then the splitting $T'':=(\text{Fill}(w)\ast\langle t\rangle)\ast\langle a'_1w'',a'_2,\dots,a'_k\rangle$ is compatible with $T$. If it were also compatible with $T'$, the third point of Lemma \ref{disjointness} would imply that one of the edge groups of $T''$ is elliptic in $T'$. This is not the case, since both $t$ and $a'_1w''$ are seen to be hyperbolic in $T'$.
\\
\\
\textit{Case 2.4} : The splitting $T'$ is of the form $[A'\ast (C\ast\langle w'\rangle)\ast D^{t^{-1}}]\ast_{\langle w'\rangle}$, where $C$ and $D$ are two complementary free factors of $\text{Fill}(w)$, and $w'\in D$.\\
The same splitting as in Case 2.3 works.
\\
\\
\textit{Case 2.5} : The splitting $T'$ is of the form $[A'\ast(C\ast\langle t\rangle)\ast\langle w'^{t'}\rangle]\ast_{\langle w'\rangle}$, where $C$ is a corank one free factor of $\text{Fill}(w)$, and $w'\in C$, and $t'\in\text{Fill}(w)$.\\
Let $w''\in\text{Fill}(w)\smallsetminus (C\ast\langle w'^{t'}\rangle)$. Again, the splitting $(\text{Fill}(w)\ast\langle t\rangle) \ast \langle a'_1w'',\dots,a'_k\rangle$ is compatible with $T$, but not with $T'$ since both $w''$ and $a'_1w''$ are hyperbolic in $T'$. 
\end{proof}

\begin{figure}
\begin{center}
\includegraphics{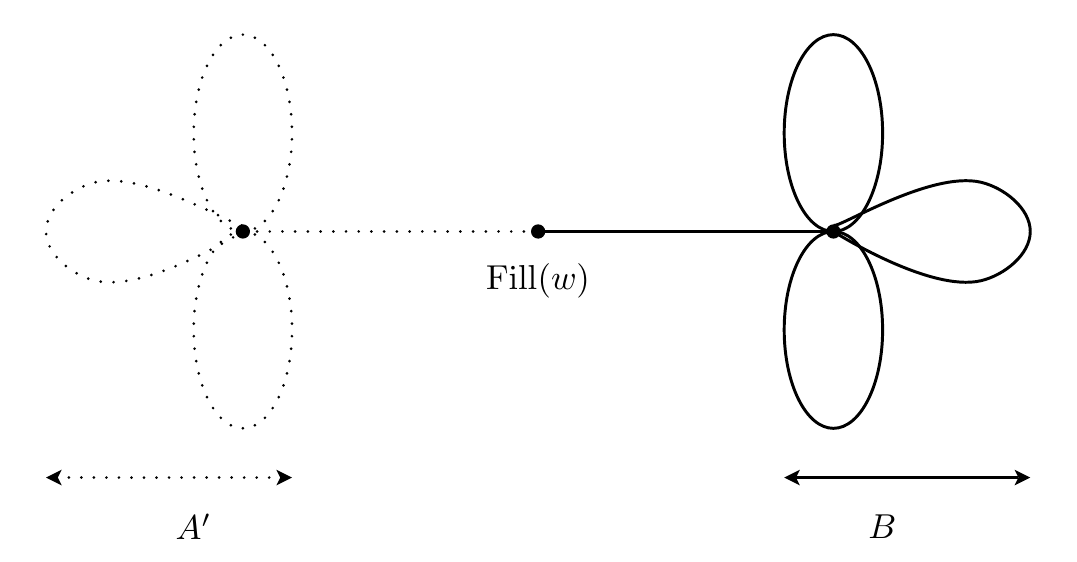}
\caption{The splitting $S$ in Case 1 of the proof of Proposition \ref{distinguishing-tori}.}
\label{fig-case1}
\end{center}
\end{figure}

\begin{figure}
\begin{center}
\includegraphics{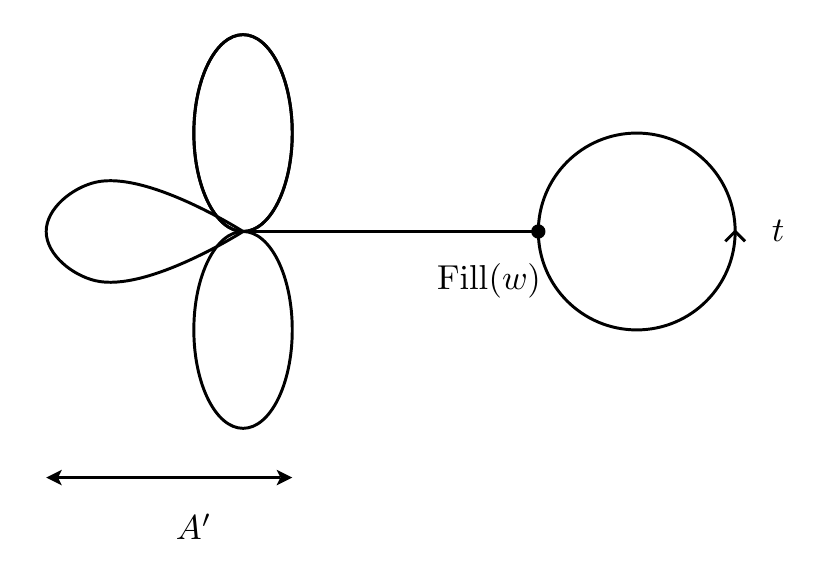}
\caption{The splitting $S$ in Case 2 of the proof of Proposition \ref{distinguishing-tori}.}
\label{fig-case2}
\end{center}
\end{figure}

\begin{prop} \label{fix-good}
Any isometry of $\mathcal{G}$ that restricts to the identity on $FS_N$ fixes the set of good one-edge splittings pointwise.
\end{prop}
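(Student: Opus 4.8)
The plan is to show that a good one-edge $\mathbb{Z}$-splitting is characterized, among all good one-edge $\mathbb{Z}$-splittings, by the collection of one-edge free splittings compatible with it, and then to combine this with the fact that $f$ fixes every free splitting.

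First I would set up the bookkeeping. Let $f$ be an isometry of $\mathcal{G}$ restricting to the identity on $FS_N$. By Proposition~\ref{max-preserved}, $f$ permutes the one-edge splittings of $F_N$; being a bijection that is the identity on $FS_N$, it maps non-free splittings to non-free splittings; and by Proposition~\ref{preservation} it preserves the set of good one-edge splittings. Recalling the classification of one-edge cyclic splittings, a good one-edge splitting is either a one-edge free splitting --- in which case $f$ fixes it outright --- or a good one-edge $\mathbb{Z}$-splitting, in which case $f(T)$ is again a good one-edge $\mathbb{Z}$-splitting. So it suffices to treat a good one-edge $\mathbb{Z}$-splitting $T$ and prove that $f(T)=T$.

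Next I would record that compatibility between one-edge splittings is visible in the metric of $\mathcal{G}$: two distinct one-edge splittings are never adjacent (neither is a proper refinement of the other), and any common neighbour of two one-edge splittings must be a proper refinement of both, so two distinct one-edge splittings are compatible if and only if they lie at distance exactly $2$ in $\mathcal{G}$ (Theorem~\ref{t:ss} produces the common two-edge refinement as the midpoint of a length-$2$ path). Since $f$ is an isometry it therefore preserves compatibility among one-edge splittings. Because $f$ fixes every one-edge free splitting $S$, such an $S$ is compatible with $T$ if and only if $S=f(S)$ is compatible with $f(T)$; hence $T$ and $f(T)$ are compatible with exactly the same one-edge free splittings of $F_N$.

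Finally I would appeal to Proposition~\ref{distinguishing-tori}: were $T$ and $f(T)$ distinct, there would be a one-edge free splitting compatible with precisely one of them, contradicting the conclusion of the previous step. Therefore $f(T)=T$. The real content of the argument has already been absorbed into Proposition~\ref{distinguishing-tori} (the long case analysis 1.1--2.5 there), so what remains is short; the only point that needs care is checking that ``compatible one-edge splittings'' is genuinely equivalent to ``distance $2$ in $\mathcal{G}$'', i.e.\ that there are no spurious length-$2$ paths between one-edge splittings, which is exactly where the uniqueness clause of Theorem~\ref{t:ss} is used.
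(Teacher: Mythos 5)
Your proposal is correct and follows essentially the same route as the paper: appeal to Proposition~\ref{preservation} to see that $f$ preserves the set of good one-edge splittings, translate compatibility of one-edge splittings into the distance-$2$ relation in $\mathcal{G}$, and then invoke Proposition~\ref{distinguishing-tori} to conclude that distinct good one-edge splittings are compatible with different sets of one-edge free splittings. The paper's proof is terser, but the content is the same; you add the (correct and worthwhile) observation that free splittings among the good ones are fixed outright, so one only needs to compare pairs of good one-edge $\mathbb{Z}$-splittings, which is precisely the setting of Proposition~\ref{distinguishing-tori}.
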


\begin{proof}
By Proposition \ref{preservation}, any isometry $f$ of $\mathcal{G}$ that restricts to the identity on $FS_N$ preserves the set $X$ of good one-edge splittings. If $T$ and $T'$ are two distinct good one-edge splittings, Proposition \ref{distinguishing-tori} implies that the set of one-edge free splittings at distance at most $2$ from $T$ and the set of one-edge free splittings at distance at most $2$ from $T'$ are distinct. Hence $f$ fixes $X$ pointwise.
\end{proof}

\begin{prop} \label{distinguish-bad}
Let $T$ and $T'$ be two distinct bad one-edge cyclic splittings of $F_N$. Then there exists a good one-edge cyclic splitting $T''$ of $F_N$ which is compatible with exactly one of the splittings $T$ and $T'$. If in addition $T$ and $T'$ are maximally-cyclic, then $T''$ can be chosen to be maximally-cyclic. 
\end{prop}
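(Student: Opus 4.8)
The plan relies on the characterization in Proposition~\ref{characterization-bad-torus}. Write $T=(A\ast\langle w^{t}\rangle)\ast_{\langle w\rangle}$ and $T'=(A'\ast\langle w'^{t'}\rangle)\ast_{\langle w'\rangle}$, so that $w$ fills $A=\text{Fill}(w)$ and $w'$ fills $A'=\text{Fill}(w')$; by Proposition~\ref{characterization-bad-torus}, the nonseparating one-edge free splitting $A\ast$ (resp.\ $A'\ast$) is the \emph{unique} one-edge free splitting compatible with $T$ (resp.\ $T'$), so the conjugacy classes $[A]$ and $[A']$ are invariants of $T$ and $T'$. If $[A]\neq[A']$, I take $T''=A\ast$: it is compatible with $T$, and not with $T'$ (otherwise Proposition~\ref{characterization-bad-torus} would force $A\ast=A'\ast$); being a one-edge free splitting it is good, has trivial edge stabilizers and trivial tripod stabilizers, hence lies in each of $FZ_N$, $FZ_N^{max}$ and $VS_N$. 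This settles both assertions in this case.

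Assume from now on, after replacing $T'$ by an equivalent splitting, that $A=A'$. A key preliminary observation is that, since $w$ fills $A$, the free factor $A$ and the $F_N$-orbit of the $\langle w\rangle$-edges survive every refinement of $T$: an Euler-characteristic computation together with $\text{Fill}_{A}(w)=A$ shows that blowing up the vertex $H=A\ast\langle w^{t}\rangle$ of $T$ relative to the two incident subgroups $\langle w\rangle\subseteq A$ and $\langle w^{t}\rangle$ is possible only by splitting off the $\mathbb{Z}$-factor $\langle w^{t}\rangle$ as a free splitting, or by a $\mathbb{Z}$-splitting in which $A$ remains a vertex group; and likewise for $T'$. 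In particular $T$ and $T'$ are both compatible with a single free splitting, namely $A\ast$, so they can be separated only by a $\mathbb{Z}$-splitting; and once $[A]$ is fixed, the invariant that distinguishes them is the $F_N$-conjugacy class of the vertex group $H=A\ast\langle w^{t}\rangle$, which simultaneously records the $A$-conjugacy class of $\langle w\rangle$ and the way the stable letter reattaches the $t$-edge of $A\ast$ in the common refinement of $T$ with $A\ast$.

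The heart of the proof is therefore to construct a good one-edge $\mathbb{Z}$-splitting $T''$ that is compatible with $T$, detects the conjugacy class of $H$, and is not compatible with $T'$. One checks that the family $(A\ast\langle g^{t^{-1}}\rangle)\ast_{\langle g\rangle}$ with $g\in A$ used in the proof of Lemma~\ref{one-edge-valence}, although good and compatible with $T$, does not help here: such splittings turn out not to separate bad splittings having the same free factor $A$. Instead I would take a good $\mathbb{Z}$-splitting obtained from a refinement of $T$ by collapsing, with edge group $\langle u\rangle$ where $u$ involves the $t$-direction (for instance $u=at$ with $a\in A$, which lies in --- but does not fill --- a corank-one free factor of $F_N$), chosen so that a suitably selected element of the form $a\,w^{t}$ with $a\in A$ is elliptic in the common refinement of $T''$ with $T$ but is hyperbolic in $T'$. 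Such a $u$ can be arranged to be not a proper power, so that $\langle u\rangle$ is malnormal and $T''$ is good, maximally-cyclic, and has trivial tripod stabilizers, hence lies in all three graphs $\mathcal{G}$.

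Compatibility of $T''$ with $T$ is exhibited by the explicit common refinement. Incompatibility with $T'$ is the part I expect to be the main obstacle, and I would prove it exactly in the style of the proofs of Propositions~\ref{max-vsn} and \ref{distinguishing-tori}: suppose a common refinement of $T''$ and $T'$ existed; by Theorem~\ref{t:ss} and Lemma~\ref{disjointness}, together with an unfolding as in Lemma~\ref{unfold} of the cyclic edge (to keep $A$ undivided and constrain where the $t$-direction can go), its shape is severely restricted; then one exhibits an element of $F_N$ whose translation axis crosses the distinguishing turn and therefore cannot become elliptic under the collapse onto $T'$, a contradiction. A short extra sub-case analysis --- according to whether $\langle w\rangle$ and $\langle w'\rangle$ are conjugate in $A$, using that free factors of a free group are malnormal, so that in the conjugate case one may further reduce to $w=w'$ with $T$ and $T'$ differing only in the stable letter --- finishes the argument; and since in every case $T''$ is $A\ast$ or a good one-edge $\mathbb{Z}$-splitting chosen as above, $T''$ is good and, when $T$ and $T'$ are maximally-cyclic, also maximally-cyclic with trivial tripod stabilizers, which proves the addendum.
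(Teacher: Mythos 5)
Your first step---distinguishing $T$ and $T'$ by their unique compatible free splittings $A\ast$ and $A'\ast$ when $[A]\neq[A']$---is exactly the paper's opening move. After that the proposal diverges, and the divergence is where the problems are.

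The main issue is that the case $[A]=[A']$ is not actually proved; it is outlined. You gesture at a construction (``take a good $\mathbb{Z}$-splitting with edge group $\langle u\rangle$ where $u$ involves the $t$-direction, for instance $u=at$''), but you do not specify what $T''$ is as a splitting, do not verify it is compatible with $T$, and do not carry out the incompatibility with $T'$ beyond ``one exhibits an element whose translation axis crosses the distinguishing turn.'' This is the entire content of the proposition once the easy case is disposed of, so the argument as written has a gap in precisely the place where something must be proved.

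There is also a concrete error. You write that the family $(A\ast\langle g^{t^{-1}}\rangle)\ast_{\langle g\rangle}$ with $g\in A$ ``turns out not to separate bad splittings having the same free factor $A$'' and therefore ``does not help here.'' This is false in general, and the paper's proof uses exactly such a splitting (with $g=w''$ taken inside a proper free factor of $A$, so that it is good) to handle the case where $T'$ has the form $(A\ast\langle w'^{t^{-1}}\rangle)\ast_{\langle w'\rangle}$, i.e.\ where the stable letter of $T'$ points in the opposite direction from that of $T$. The dichotomy that drives the paper's proof is whether $T'$ looks like $(A\ast\langle w'^{t}\rangle)\ast_{\langle w'\rangle}$ or $(A\ast\langle w'^{t^{-1}}\rangle)\ast_{\langle w'\rangle}$ relative to the common free splitting $A\ast$; in the first case a separating $\mathbb{Z}$-splitting $A\ast_{\langle w\rangle}\langle w,t\rangle$ does the job, in the second case the very family you dismissed works. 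Your proposed sub-case split (whether $\langle w\rangle$ and $\langle w'\rangle$ are conjugate in $A$) is orthogonal to this and does not appear to isolate the phenomenon that makes the construction change.

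In short: the proposal identifies the correct starting point and the correct ingredients (Proposition~\ref{characterization-bad-torus}, Lemma~\ref{unfold}, Lemma~\ref{disjointness}, and Scott--Swarup), but the core of the argument is left as a sketch, and the one definite claim it makes about which distinguishing splittings cannot work contradicts what the paper actually does. To fix it you would need to (i) recognize the stable-letter-orientation dichotomy, (ii) produce an explicit good $T''$ in each case, and (iii) prove incompatibility with $T'$ rather than assert it.
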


\begin{proof}
Let $T$ and $T'$ be two bad one-edge splittings. Proposition \ref{characterization-bad-torus} implies that there is exactly one free splitting $S$ (resp. $S'$) which is compatible with $T$ (resp. $T'$). If $S\neq S'$, then we are done, so we assume that $S=S'$. The splitting $T$ is of the form $(A\ast\langle w^{t}\rangle)\ast_{\langle w\rangle}$ for some corank one free factor $A$ of $F_N$ and some $w\in A$. 
\\
\\
\textit{Case 1} : The splitting $T'$ is of the form $(A\ast\langle w'^t\rangle)\ast_{\langle w'\rangle}$.\\
Up to exchanging the roles of $T$ and $T'$, we can assume that $w'\notin\langle w\rangle$. Denoting by $T''$ the separating (and hence good) splitting $A\ast_{\langle w\rangle}\langle w,t\rangle$, we get that $T''$ is compatible with $T$, but not with $T'$. Indeed, if it were, then in a common refinement of $T'$ and $T''$, the axis of $t$ should meet the fixed point set of $w'$, which leads to a contradiction because $w'$ and $t$ belong to distinct elliptic subgroups of $T''$. 
\\
\\
\textit{Case 2} : The splitting $T'$ is of the form $(A\ast\langle w'^{t^{-1}}\rangle)\ast_{\langle w'\rangle}$.\\
Let $w''\in A$ be contained in some proper free factor of $A$ (this exists because $N\ge 3$). The splitting $(A\ast\langle w''^{t^{-1}}\rangle)\ast_{\langle w''\rangle}$ is then a good splitting which is compatible with $T$, but not with $T'$.  (This can be seen by looking at the possible forms of a two-edge refinement of $T'$ with this splitting, and reaching a contradiction in each case.)
\end{proof}

\begin{prop} \label{fix-bad}
Any isometry of $\mathcal{G}$ that restricts to the identity on $FS_N$ fixes the set of bad one-edge splittings of $F_N$ pointwise.
\end{prop}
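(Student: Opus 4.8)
The plan is to mimic exactly the strategy used for the good one-edge splittings in Proposition~\ref{fix-good}, but replacing the role of one-edge free splittings by one-edge good splittings. By Proposition~\ref{preservation}, an isometry $f$ of $\mathcal{G}$ that restricts to the identity on $FS_N$ preserves the set $Y$ of bad one-edge splittings. By Proposition~\ref{fix-good}, such an $f$ also fixes every good one-edge splitting. Since compatibility of two splittings is equivalent to being at distance $2$ in $\mathcal{G}$ (the one-edge splittings being at distance $2$ precisely when they admit a common two-edge refinement, cf. the use of Theorem~\ref{t:ss} in earlier proofs), $f$ preserves, for each bad one-edge splitting $T$, the set of good one-edge splittings compatible with $T$; and this set is a complete invariant distinguishing the elements of $Y$ by Proposition~\ref{distinguish-bad}.

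More precisely, first I would observe that $f$ permutes $Y$, and that for $T\in Y$ the collection $\mathcal{C}(T)$ of good one-edge splittings of $F_N$ compatible with $T$ is carried by $f$ to $\mathcal{C}(f(T))$, because $f$ fixes all good one-edge splittings (Proposition~\ref{fix-good}) and preserves the compatibility relation among one-edge splittings (two one-edge splittings are compatible if and only if they are joined by a path of length $2$ through their common refinement, a metric condition preserved by $f$; here one also uses Proposition~\ref{max-preserved} to know $f$ sends one-edge splittings to one-edge splittings). Hence $\mathcal{C}(T)=\mathcal{C}(f(T))$. For the graph $\mathcal{G}=FZ_N$ (where all one-edge $\mathbb Z$-splittings occur) this is immediate; for $\mathcal{G}=FZ_N^{max}$ or $VS_N$ one uses the second assertion of Proposition~\ref{distinguish-bad}, which guarantees the distinguishing good splitting $T''$ may be taken maximally-cyclic, hence to lie in $\mathcal{G}$, and (being a $\mathbb Z$-splitting with no tripod issue in the relevant constructions, or more simply being a separating $\mathbb Z$-splitting) to lie in $VS_N$ as well.

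Then I would conclude: if $f(T)\neq T$ for some $T\in Y$, Proposition~\ref{distinguish-bad} produces a good one-edge splitting $T''\in\mathcal{G}$ compatible with exactly one of $T$ and $f(T)$, contradicting $\mathcal{C}(T)=\mathcal{C}(f(T))$. Therefore $f(T)=T$ for all $T\in Y$, i.e. $f$ fixes the set of bad one-edge splittings pointwise.

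I do not expect a serious obstacle here: the proposition is a formal consequence of Propositions~\ref{preservation}, \ref{fix-good} and \ref{distinguish-bad}, in complete analogy with Proposition~\ref{fix-good}. The only point requiring a little care is the bookkeeping for the graphs $FZ_N^{max}$ and $VS_N$, namely checking that the distinguishing good splitting produced in Proposition~\ref{distinguish-bad} genuinely lies in $\mathcal{G}$; but this is precisely what the last sentence of Proposition~\ref{distinguish-bad} provides (the separating splitting $A\ast_{\langle w\rangle}\langle w,t\rangle$ and the nonseparating good splitting $(A\ast\langle w''^{t^{-1}}\rangle)\ast_{\langle w''\rangle}$ used there are already maximally-cyclic, and have trivial tripod stabilizers).
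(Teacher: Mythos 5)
Your proposal is correct and takes essentially the same approach as the paper: both argue that the isometry $f$ preserves the set of bad one-edge splittings (Proposition~\ref{preservation}), fixes the good one-edge splittings pointwise (Proposition~\ref{fix-good}), and then conclude via Proposition~\ref{distinguish-bad} that the $f$-invariant set of good one-edge splittings at distance $2$ in $\mathcal{G}$ from a bad one-edge splitting $T$ is a complete invariant of $T$. Your additional bookkeeping---checking that the distinguishing good splitting $T''$ produced in Proposition~\ref{distinguish-bad} actually lies in $FZ_N^{max}$ or $VS_N$ when needed---is a sensible clarification that the paper leaves implicit.
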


\begin{proof}
Let $f$ be an isometry of $\mathcal{G}$ that restricts to the identity on $FS_N$. It follows from Proposition \ref{preservation} that $f$ preserves the set of bad one-edge splittings of $F_N$. If $T$ and $T'$ are two bad one-edge splittings in $\mathcal{G}$, then Proposition \ref{distinguish-bad} implies that the set of good one-edge splittings at distance $2$ from $S$ in $\mathcal{G}$ is distinct from the set of good one-edge splittings at distance $2$ from $T'$ in $\mathcal{G}$. The claim thus follows from Proposition \ref{fix-good}.
\end{proof}

\begin{proof}[Proof of Theorem~A]
Let $\mathcal{G}$ be one of the graphs $FZ_N$, $FZ_N^{max}$ or $VS_N$, and let $f$ be an isometry of $\mathcal{G}$. Corollary \ref{fz-fs} (for the case where $\mathcal{G}=FZ_N$), Corollary \ref{vs-fs} (when $\mathcal{G}=VS_N$) or Corollary \ref{stable-subcomplex} (when $\mathcal{G}=FZ_N^{max}$) show that $f$ preserves $FS_N$ setwise. Proposition \ref{free-automorphisms} then implies that up to composing $f$ with an element of $\text{Out}(F_N)$, we may assume that $f$ restricts to the identity on $FS_N$, and we want to show that $f=\text{id}$. The set $X_1$ (resp. $X_2$) of good (resp. bad) one-edge splittings in $\mathcal{G}$ is fixed pointwise by $f$ (Propositions \ref{fix-good} and \ref{fix-bad}). Using Theorem \ref{t:ss}, we get that for all $T,T'\in \mathcal{G}\smallsetminus (X_1\cup X_2)$, the set of elements in $X_1\cup X_2$ at distance $1$ from $T$ differs from the set of elements at distance $1$ from $T'$, so $f$ also fixes $\mathcal{G}\smallsetminus (X_1\cup X_2)$ pointwise. Theorem~A follows.
\end{proof}

\section{Variations over complexes}

Denote by $(FZ_N)'$ (respectively $(FZ_N^{max})'$) the graph whose vertices are the equivalence classes of one-edge cyclic (resp. maximally-cyclic) splittings of $F_N$, in which two vertices are joined by an edge whenever they are compatible. These are dual versions of the graphs we have been dealing with so far.

\begin{prop}\label{automorphisms-vs'}
For all $N\ge 3$, the natural maps from $\text{Out}(F_N)$ to the isometry groups of $(FZ_N)'$ and $(FZ_N^{max})'$ are isomorphisms.
\end{prop}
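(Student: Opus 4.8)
The plan is to show that the graphs $(FZ_N)'$ and $(FZ_N^{max})'$ determine $FZ_N$ and $FZ_N^{max}$ respectively -- so that an isometry of the former extends canonically to one of the latter -- and then to apply Theorem~A. Throughout let $\mathcal{G}'$ be one of $(FZ_N)'$, $(FZ_N^{max})'$ and $\mathcal{G}$ the corresponding graph $FZ_N$, $FZ_N^{max}$.

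The first step is to record, via Theorem~\ref{t:ss}, the dictionary between finite cliques of $\mathcal{G}'$ and vertices of $\mathcal{G}$. A finite clique of size $k$ in $(FZ_N)'$ is a set of $k$ pairwise-compatible one-edge cyclic splittings, hence by Theorem~\ref{t:ss} has a unique common refinement, a $k$-edge cyclic splitting; conversely a $k$-edge cyclic splitting returns the $k$-clique of its one-edge collapses. For $(FZ_N^{max})'$ one checks in addition that this stays inside the maximally-cyclic world: in an equivariant collapse map between simplicial $F_N$-trees the stabilizer of a non-collapsed edge is unchanged, so the edge stabilizers of a common refinement agree (up to conjugacy, with multiplicity) with those of its one-edge collapses; hence one-edge collapses and finite common refinements of maximally-cyclic splittings are maximally-cyclic. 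Using the uniqueness part of Theorem~\ref{t:ss}, one then verifies that a cyclic splitting $T'$ properly refines $T$ if and only if the set of one-edge collapses of $T$ is a proper subset of that of $T'$. So the vertices of $\mathcal{G}$ correspond bijectively to the finite non-empty cliques of $\mathcal{G}'$, and adjacency in $\mathcal{G}$ to strict containment of cliques; for $(FZ_N^{max})'$ this is complete since Theorem~\ref{bf} forces all cliques to be finite.

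Granting the dictionary, an isometry $g$ of $\mathcal{G}'$ -- being a graph automorphism -- carries cliques to cliques of the same size and preserves inclusions, so it induces a bijection $\hat g$ of the vertices of $\mathcal{G}$ preserving adjacency, i.e.\ an isometry of $\mathcal{G}$, which restricts to $g$ on the one-edge splittings (the size-one cliques). By Theorem~A, $\hat g$ is induced by a unique $\phi\in\text{Out}(F_N)$, and then $\phi$ induces $g$; this gives surjectivity of $\text{Out}(F_N)\to\mathrm{Isom}(\mathcal{G}')$. Injectivity is immediate: an element of $\text{Out}(F_N)$ acting trivially on $\mathcal{G}'$ fixes every one-edge free splitting, hence acts trivially on $FS_N'$, and so is trivial by Theorem~\ref{Aramayona-Souto}.

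The step needing the most care is the completeness of the dictionary for $(FZ_N)'$ -- one must rule out an infinite family of pairwise-compatible one-edge cyclic splittings with no common refinement. There are two routes. The shortest is to note that every cyclic splitting of $F_N$ has finitely many orbits of edges, so only finite cliques occur, exactly as in the maximally-cyclic case. Alternatively one sidesteps it: a one-edge cyclic splitting is free if and only if it lies in a maximal clique of $(FZ_N)'$ of size $3N-3$ (maximal free splittings have $3N-3$ edges and trivial vertex groups, hence admit no proper refinement, while by Proposition~\ref{max-free} every $FZ_N$-maximal cyclic splitting is free). Hence $g$ preserves the set of one-edge free splittings and, as in Proposition~\ref{free-automorphisms}, restricts to an automorphism of $FS_N'$; composing with a suitable element of $\text{Out}(F_N)$ we may assume $g$ fixes $FS_N'$ pointwise, and then Propositions~\ref{characterization-bad-torus}, \ref{distinguishing-tori} and~\ref{distinguish-bad} -- applied as in Section~\ref{sec-end}, with ``compatibility'' replaced by adjacency in $(FZ_N)'$ -- show that $g$ fixes every good, then every bad, one-edge splitting, so $g=\text{id}$.
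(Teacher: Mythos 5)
Your argument follows the paper's approach: use Theorem~\ref{t:ss} to identify each vertex $T$ of $\mathcal{G}$ with the finite clique $\mathcal{C}_T$ of its one-edge collapses, so that an isometry $g$ of $\mathcal{G}'$ induces an isometry $\hat{g}$ of $\mathcal{G}$ agreeing with $g$ on one-edge splittings, and then invoke Theorem~A. One remark: your ``shortest route'' for $(FZ_N)'$ rests on a false claim. It is not true that only finite cliques occur in $(FZ_N)'$: the example at the end of Section~\ref{s:vs} (arbitrarily long chains $\langle a\rangle\ast_{\langle a^2\rangle}\langle a^2\rangle\ast_{\langle a^4\rangle}\cdots$) produces an infinite family of pairwise-compatible one-edge cyclic splittings, hence an infinite clique; ``each splitting has finitely many edge orbits'' does not prevent a pairwise-compatible family from having no common refinement. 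Fortunately the concern is moot: the argument only needs the bijection between \emph{finite} cliques of $\mathcal{G}'$ and vertices of $\mathcal{G}$, together with the fact that a graph isomorphism carries a finite clique to a finite clique of the same size, so $\hat{g}$ is well defined regardless of whether infinite cliques exist. (Your second route, via Proposition~\ref{max-free} and the Section~\ref{sec-end} argument, is a valid but unnecessary detour.)
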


\begin{proof}
Let $\mathcal{G}$ be one of the graphs $FZ_N$ or $FZ_N^{max}$, and $\mathcal{G}'$ be its "dual" version. Let $f$ be an isometry of $\mathcal{G}'$. Let $T\in \mathcal{G}$. The set of one-edge splittings at distance at most $1$ from $T$ in $\mathcal{G}$ is a collection of pairwise compatible one-edge splittings, hence it is mapped by $f$ to a collection $\mathcal{C}$ of pairwise compatible one-edge splittings in $\mathcal{G}$. Thanks to Theorem \ref{t:ss}, we know that the splittings in $\mathcal{C}$ have a common refinement in $\mathcal{G}$, which is the unique tree $T'\in \mathcal{G}$ such that the set of one-edge splittings at distance at most $1$ from $T'$ is equal to $\mathcal{C}$. Hence $f$ induces a map $f':\mathcal{G}\to \mathcal{G}$, mapping $T$ to $T'$. This is an isometry: it is a bijection because $f$ is, and any collapse (resp. refinement) of a splitting is mapped by $f$ to a collapse (resp. refinement) of its $f$-image. By Theorem~A, there exists $\Phi\in\text{Out}(F_N)$ such that for all $T\in \mathcal{G}$, we have $f'(T)=T \cdot \Phi$. This holds in particular for one-edge splittings of $F_N$, for which $f(T)=f'(T)$. Hence $f$ is induced by an element of $\text{Out}(F_N)$.
\end{proof}

\begin{rk}
One might turn the graph $(FZ_N^{max})'$ into a higher-dimensional complex $\widehat{FZ_N^{max}}$ (or $\widehat{VS_N}$) by adding a $k$-simplex for each maximally-cyclic (or very small) splitting of $F_N$ having exactly $k$ orbits of edges. Thanks to Theorem \ref{bf}, we know that the complexes $\widehat{FZ_N^{max}}$ and $\widehat{VS_N}$ are finite-dimensional (whereas the same construction would lead to an infinite-dimensional complex in the case of general cyclic splittings of $F_N$). Any simplicial automorphism of either $\widehat{FZ_N^{max}}$ or $\widehat{VS_N}$ restricts to an isometry of its one-skeleton, hence the groups of simplicial automorphisms of $\widehat{FZ_N^{max}}$ and $\widehat{VS_N}$ also coincide with $\text{Out}(F_N)$. The complex $\widehat{FZ_N^{max}}$ is flag by Theorem \ref{t:ss}, while $\widehat{VS_N}$ is not.
\end{rk}

\bibliographystyle{amsplain}
\bibliography{Automorphisms-Bibliography}

\end{document}